\documentclass[11pt]{amsart}
\usepackage{graphicx} 
\usepackage{amssymb,amscd,amsmath,color,latexsym,epsfig,hyperref,mathtools, graphicx,enumitem}
\usepackage[all]{xy}
\usepackage[mathscr]{euscript}
\usepackage{tikz-cd}
\usepackage{tikz}
\usepackage{circuitikz}
\usepackage{float}

\usetikzlibrary{calc,trees,positioning,arrows,chains,shapes.geometric,%
    decorations.pathreplacing,decorations.pathmorphing,shapes,%
    matrix,shapes.symbols}

\tikzset{
>=stealth',
  punktchain/.style={
    rectangle,
    rounded corners,
    draw=black, thick,
    minimum height=3em,
    text centered,
    on chain},
  line/.style={draw, thick, <-},
  element/.style={
    tape,
    top color=white,
    bottom color=blue!50!black!60!,
    minimum width=8em,
    draw=blue!40!black!90, very thick,
    text width=10em,
    minimum height=3.5em,
    text centered,
    on chain},
  every join/.style={->, thick,shorten >=1pt},
  decoration={brace},
  tuborg/.style={decorate},
  tubnode/.style={midway, right=2pt},
}

\DeclareFontFamily{OT1}{rsfs}{}
\DeclareFontShape{OT1}{rsfs}{n}{it}{<-> rsfs10}{}
\DeclareMathAlphabet{\curly}{OT1}{rsfs}{n}{it}

\DeclareFontFamily{U}{mathb}{\hyphenchar\font45}
\DeclareFontShape{U}{mathb}{m}{n}{
      <5> <6> <7> <8> <9> <10> gen * mathb
      <10.95> mathb10 <12> <14.4> <17.28> <20.74> <24.88> mathb12
      }{}
\DeclareSymbolFont{mathb}{U}{mathb}{m}{n}

\makeatletter
\newcommand{\eqnum}{\refstepcounter{equation}\textup{\tagform@{\theequation}}}
\makeatother

\newcommand\C{\mathbb C}
\newcommand\Q{\mathbb Q}
\newcommand\R{\mathbb R}

\newcommand\Z{\mathbb Z}
\renewcommand\AA{\mathbb A}

\newcommand\PP{\mathbb P}

\newcommand\TT{\mathbb T}

\renewcommand\P{\mathbf P}

\newcommand\cO{\mathcal O}

\newcommand\cC{\mathcal C}

\newcommand\cE{\mathcal E}

\newcommand\cM{\mathcal M}

\newcommand\cT{\mathcal T}

\newcommand\cZ{\mathcal Z}

\makeatletter
\newcommand{\so}{\ \ext@arrow 0359\Rightarrowfill@{}{\hspace{3mm}}\ }
\makeatother

\newcommand\into{\hookrightarrow}
\newcommand\INTO{\ \ar@{^(->}[r]<-.2ex>}

\newfont{\bigtimesfont}{cmsy10 scaled \magstep5}
\newcommand{\bigtimes}{\mathop{\lower0.9ex\hbox{\bigtimesfont\symbol2}}}
\renewcommand\={\ =\ }

\DeclareMathSymbol{\lefttorightarrow}{3}{mathb}{"FC}
\DeclareMathSymbol{\righttoleftarrow}{3}{mathb}{"FD}

\DeclareMathOperator*{\bigboxtimes}{\vcenter{\hbox{\scalebox{2}{$\boxtimes$}}}}

\newcommand\pt{\operatorname{pt}}

\newcommand\ev{\operatorname{ev}}

\newcommand\rk{\operatorname{rank}}
\newcommand\vir{\operatorname{vir}}
\newcommand\vd{\operatorname{vdim}}
\newcommand\vcd{\operatorname{vcodim}}

\newcommand\Aut{\operatorname{Aut}}

\newcommand\Cone{\operatorname{Cone}}
\newcommand\Bl{\operatorname{Bl}}

\newcommand\Eq{\operatorname{Eq}}
\newcommand\rel{\operatorname{rel}}
\newcommand\fix{\operatorname{fix}}
\newcommand\res{\operatorname{res}}
\def\wtil{\widetilde}

\newcommand\Sing{\operatorname{Sing}}
\newcommand\st{\operatorname{st}}
\newcommand\edg{\operatorname{edge}}

\newcommand\val{\mathrm{val}}
\newcommand\un{\mathrm{un}}
\newcommand\gl{\mathrm{gl}}
\newcommand\Gat{\mathrm{Gat}}

\def\Bl{\mathrm{Bl}}

\def\pt{\mathrm{pt}}
\def\ev{\mathrm{ev}}

\newcommand{\set}[1]{\{ #1 \}}
\newcommand{\what}[1]{\widehat{#1}}
\def\Sch{\mathrm{Sch}}

\def\rH{\mathrm{H}}

\def\fM{\mathfrak{M}}

\def\fW{\mathfrak{W}}

\def\bB{\mathbf{B}}

\def\bD{\mathbf{D}}

\def\Ver{\mathrm{Ver}}
\def\Edg{\mathrm{Edg}}
\def\Leg{\mathrm{Leg}}

\newcommand\beq[1]{\begin{equation}\label{#1}}
\newcommand\eeq{\end{equation}}
\newcommand\beqa{\begin{eqnarray*}}
\newcommand\eeqa{\end{eqnarray*}}

\DeclareRobustCommand{\SkipTocEntry}[3]{}
\makeatletter
\newcommand\@dotsep{4.5}
\def\@tocline#1#2#3#4#5#6#7{\relax
  \ifnum #1>\c@tocdepth 
  \else
    \par \addpenalty\@secpenalty\addvspace{#2}%
    \begingroup \hyphenpenalty\@M
    \@ifempty{#4}{%
      \@tempdima\csname r@tocindent\number#1\endcsname\relax
    }{%
      \@tempdima#4\relax
    }%
    \parindent\z@ \leftskip#3\relax \advance\leftskip\@tempdima\relax
    \rightskip\@pnumwidth plus1em \parfillskip-\@pnumwidth
    #5\leavevmode #6\relax
    \leaders\hbox{$\m@th
      \mkern \@dotsep mu\hbox{.}\mkern \@dotsep mu$}\hfill
    \hbox to\@pnumwidth{\@tocpagenum{#7}}\par
    \nobreak
    \endgroup
  \fi}
\makeatother

\newtheorem{prop}{Proposition}[section]
\newtheorem{thm}[prop]{Theorem}
\newtheorem*{thm*}{Theorem}
\newtheorem{lem}[prop]{Lemma}
\newtheorem{coro}[prop]{Corollary}

\newtheorem{defn}[prop]{Definition}
\newtheorem{conj}[prop]{Conjecture}

\theoremstyle{definition}
\newtheorem{rmk}[prop]{Remark}

\newtheorem{exam}[prop]{Example}

\title[GW invariants of blow-ups]{Blow-up formulas of Gromov--Witten invariants and virtual cycles under positivity conditions}

\author{Sanghyeon Lee}
\address{Ajou University,
206 World cup-ro, Suwon, Republic of Korea}
\email{sanghyeon25@ajou.ac.kr}

\author{Seungjae Yun}
\address{Seoul National University, 1 Gwanak-ro, Seoul, Republic of Korea}
\email{for63434@snu.ac.kr}

\thanks{}

\date{}

\begin{document}

\begin{abstract} Let $\wtil X$ be the blow-up of a smooth projective variety $X$ along a smooth subvariety $Z \subset X$ satisfying a numerical positivity condition. We study virtual cycles of moduli spaces of stable maps to $\wtil X$. 

We lift the numerical blow-up formula of Chen and Du \cite{CD23} to a pushforward identity for virtual cycles of moduli spaces of genus-zero stable maps to $\wtil{X}$ and $X$. Using virtual dimension calculations for the fixed loci and a free-leaf contraction morphism, we show that all correction terms in the master space localization formula vanish. Variants of this method yields further blow-up formulas, including a generalization of Gathmann’s formula \cite{Gat01}. 

We also give a counterexample to the point-blow-up conjecture in higher genus \cite{Hu00}, and establish an absolute--relative correspondence in all genera under positivity and dimension assumptions, comparing the Gromov--Witten theories of $(\wtil{X},E)$ and $X$, where $E$ is the exceptional divisor.

\end{abstract}

\maketitle

\setcounter{tocdepth}{1}
\tableofcontents

\section{Introduction}

The relationship between Gromov--Witten (GW) invariants of a smooth projective variety $X$ and those of its blow-up $\wtil{X}$ along a smooth subvariety $Z \subset X$ has been extensively studied in both algebraic geometry and symplectic geometry.

Given primary insertions $\gamma_1,\cdots,\gamma_k \in \rH^*(X;\Z)$, a genus $g$, and a degree $\beta \in \rH_2(X;\Z)$, the Gromov--Witten invariant $$\langle \gamma_1,\cdots, \gamma_k \rangle^X_{g,\beta}$$ heuristically counts nodal curves of genus $g$ and degree $\beta$ meeting representatives of the Poincar\'e duals of the insertions classes at the marked points.
One may consider the ideal situation where we can choose suitable $A_1,\cdots,A_k$ so that such curves $C$ do not intersect the blow-up locus $Z$. Let $\pi : \wtil{X} \to X$ be the projection morphism. Then, $\pi^{-1}(C)$ intersects $\pi^{-1}(A_1),\cdots, \pi^{-1}(A_k)$. Note that the curve $\pi^{-1}(C)$ has the degree $\pi^! \beta \in \rH_2(\wtil{X};\Z)$.

Conversely, consider a genus $g$ nodal curve $\wtil{C} \subset \wtil{X}$ with degree $\pi^! \beta$ that intersects with $\pi^{-1}(A_1),\cdots, \pi^{-1}(A_k)$. Let $E$ be the exceptional divisor of $\wtil{X}$. Since $\pi^! \beta \cdot E = 0$, if  $\wtil{C}$ is sufficiently general, it does not intersect $E$. Then $\pi(\wtil{C})$ becomes a curve of degree $\beta$ that intersects $A_1,\cdots,A_k$. The GW invariant $\langle \pi^*\gamma_1, \cdots, \pi^*\gamma_k \rangle^{\wtil{X}}_{g, \pi^! \beta}$ counts such $\wtil{C}$.

Thus, one expects the two GW invariants to be identical under suitable assumptions.
\begin{equation}\label{conj:blowup1}
\langle \gamma_1,\cdots, \gamma_k \rangle^X_{g,\beta} = \langle \pi^*\gamma_1, \cdots, \pi^*\gamma_k \rangle^{\wtil{X}}_{g, \pi^! \beta}
\end{equation}
There are various results in this direction, on both the algebraic geometry and symplectic geometry sides.

\begin{rmk}
Throughout this paper, we assume that the blow-up locus $Z$ is connected, to make the statements simple. But all of our results straightforwardly generalize to the case when $Z$ is not connected.
\end{rmk}

\subsection{Previous results on the blow-up formula}
We first review results on this conjecture obtained by algebro-geometric methods. In \cite[Theorem 2.1, Lemma 2.2]{Gat01}, Gathmann proved that \eqref{conj:blowup1} holds when $X$ is a convex variety, $Z$ is a finite set of points, and the genus is $0$. Furthermore, Gathmann proved the pushforward formula for the virtual cycles of the stable map spaces, $\pi_*[\overline{\cM}_{0,k}(\wtil{X},\pi^!\beta)]^{\vir} = [\overline{\cM}_{0,k}(X,\beta)]^{\vir}$ under the same conditions. (This pushforward formula is stronger than the numerical formula \eqref{conj:blowup1}.)

In \cite[Theorem 1.6]{Lai09}, assuming that the blow-up locus is a transverse intersection of two submanifolds of a compact homogeneous space and the genus is $0$, Lai proved that the same pushforward formula for the virtual cycles holds. Lai used virtual pushforwards \cite{Man12b} and the degeneration formula \cite{Li02}.

In \cite[Theorem 1.3]{CD23}, Chen and Du proved the numerical formula \eqref{conj:blowup1} in genus zero and the normal bundle $N_{Z/X}$ satisfies the following positivity condition \cite[Definition 1.1]{CD23}:
\begin{equation}\label{eq:positivity1}
c_1(N_{Z/X}) \cdot f_*[\PP^1] + \mathrm{codim}(Z,X) > 0 \quad \textrm{for any regular map $f : \PP^1 \to Z$}.
\end{equation}
They used virtual localization of a master space.

On the other hand, there are also results on the conjecture obtained by symplectic-geometric methods. In \cite[Theorem 1.2 and Theorem 1.3]{Hu00} and \cite[Theorem, p. 345]{Hu01}, Hu proved the conjecture in the following cases, assuming $X$ is a compact symplectic manifold.
\begin{itemize}
    \item [(I)] $g\leq1$ and $Z$ is a point.
    \item [(II)] $\dim_{\R}(X) \le 6$, $Z$ is a point, and $k\ge 1$ or $\beta \neq 0$.
    \item [(III)] $g=0$, $Z$ is a point or a smooth curve $C$ (a $2$-dimensional symplectic submanifold) satisfying $c_1(T_X) \cap [C] \ge 0$.
    \item [(IV)] $g=0$, $X$ is semipositive, $Z$ is a smooth surface (a $4$-dimensional symplectic submanifold), and $$\gamma_1|_Z = \cdots = \gamma_k|_Z = 0.$$   
\end{itemize}

\begin{thm}[Hu] \label{thm:Hublowup}
Equation \eqref{conj:blowup1} holds for the cases (I) to (IV) above.
\end{thm}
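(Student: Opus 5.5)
This theorem is quoted from Hu's work, so I would follow the symplectic route of \cite{Hu00,Hu01}: symplectic cutting (the degeneration/gluing formula) together with a dimension count. First, I would realize the blow-up as a symplectic cut. Take a tubular neighbourhood of $Z$ modelled on $N_{Z/X}$ and perform a symplectic cut along the sphere bundle $S(N_{Z/X})$. This degenerates $X$ into
\[
X^- \cup_{E} P, \qquad X^- \cong \wtil X,\quad P = \PP(N_{Z/X}\oplus \cO),
\]
glued along $E=\PP(N_{Z/X})$, which is the exceptional divisor in $\wtil X$ and the infinity section in $P$. Cutting the same neighbourhood differently gives $\wtil X$ itself as a degeneration into $\wtil X\cup_E \wtil P$, where $\wtil P$ is the blow-up of $P$ along its zero section. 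The insertions $\gamma_i$ and $\pi^*\gamma_i$ can be represented by cycles supported away from the neck. I would then apply the symplectic sum/degeneration formula to both invariants in \eqref{conj:blowup1}. Each side becomes a sum over splittings of the degree and genus and over contact orders along $E$, with terms of the form (relative invariant of $(\wtil X,E)$) $\times$ (relative invariant of $(P,E)$ or $(\wtil P,E)$), weighted by contact multiplicities and paired through a K\"unneth decomposition of the diagonal of $E$.

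The key step is to show that in both expansions only the splitting with no contact with $E$ survives. That term has all of $\beta$ on the $\wtil X$ side and a trivial (empty) piece on the $P$ side. On the $\wtil X$ side it is the relative invariant of $(\wtil X,E)$ with class $\pi^!\beta$, which equals the absolute invariant $\langle\pi^*\gamma_i\rangle^{\wtil X}_{g,\pi^!\beta}$. The same relative invariant also computes $\langle\gamma_i\rangle^X_{g,\beta}$. For every other splitting, the $P$- or $\wtil P$-component is a relative invariant with no (or only pulled-back) interior insertions. I would show it vanishes by comparing virtual dimensions. Its virtual dimension equals $\int c_1(T_P)+(\dim_\C P-3)(1-g)+\sum(1-\mu_j)$ and must match the degree of the boundary constraints coming from $E$. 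In each case I would verify that the vanishing follows from an inequality, as follows.

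In case (I), $Z$ is a point, $P=\PP^n$ and $E=\PP^{n-1}$, and the inequality is explicit from $c_1$. For genus one, the extra $g$-dependence requires separately checking the term where a genus-one component falls into $P$, and this vanishes because $\dim\overline{\cM}_{1,\mu}(\PP^n,E)$ is too small. In case (II), the low real dimension makes the dimension count hold in all genera, provided some insertion or nonzero $\beta$ forces a component outside $P$. In case (III), the hypothesis $c_1(T_X)\cap[C]\ge0$ controls $c_1$ of fibre-plus-section classes of $P$ over $C$. In case (IV), semipositivity makes the invariants definable by pseudocycles, and $\gamma_i|_Z=0$ lets us push the constraints off the neck. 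A $\C^*$-action on $P$ (rotating fibres) can be used to localize the remaining $P$-invariants to fibre classes and confirm they vanish.

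I expect the main obstacle to be the uniform dimension/vanishing argument for the $P$-side relative invariants in cases (III) and (IV). There, rubber components and multiple covers of fibres can have nonnegative expected dimension. Negative virtual dimension alone is not enough, and one needs the positivity of $c_1$ on those classes to rule them out.
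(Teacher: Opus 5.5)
Your architecture is the one the paper uses in Appendix~\ref{sec:Appblowup1} (Theorem~\ref{thm: numerical blowup}). It has the same two degenerations, $X\rightsquigarrow\wtil X\cup_E\what E$ and $\wtil X\rightsquigarrow\wtil X\cup_E\PP_E(\cO(-1)\oplus1)$; your $\wtil P$, the blow-up of $P=\what E$ along its zero section $Z_0$, is exactly this bundle. It compares both sides through the no-contact invariant of $(\wtil X,E)$ and uses a dimension count to kill every other splitting. But that count is the whole proof, you only assert it, and in case (III) it fails.

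Take the splitting with empty $\wtil X$-side. The whole curve lies in $P$ with no contact with $E$, so its class is $a[Z_0]$ and its virtual dimension is $(d-3)+a\,c_1(T_X)\cdot[C]+k$. The insertions, pulled back from $C$, have total complex degree at most $k$. When $d=3$, $C\cong\PP^1$ and $c_1(T_X)\cdot[C]=0$, this is dimensionally balanced and the term genuinely contributes. Example~\ref{exam:counterex for non-positive} satisfies the hypotheses of (III) in exactly this way and shows a discrepancy of $1$ with $k=0$. Inserting the fibre divisor (which meets $C$ once) does not help, by the divisor equation. So (III) as quoted cannot be closed by your argument without an extra hypothesis such as $\gamma_i|_Z=0$ and $k\ge1$. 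Correspondingly, the paper reproves only (I), (II), (IV), assuming $\gamma_i|_Z=0$ and $c_1(X)\cdot f_*[C]\ge0$ for curves in $Z$.

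For those three cases your plan works, but the obstacle you anticipate does not exist. No localization on $P$, rubber analysis, or control of multiple fibre covers is needed, because the virtual dimensions of the relative moduli spaces depend only on the discrete data. The paper counts once, on the $\wtil X$-side; this handles both degenerations at once because $\overline\cM_{\Psi_1}(\wtil X,E)$ is common to them. With all markings on that side (forced by $\gamma_i|_Z=0$), a term survives only if
\begin{equation*}
(-n_{\what E}+g_{\what E}+q)(d-3)+q-r\mathfrak e-\int_{\beta_*}c_1(X)\ \ge\ 0 .
\end{equation*}
The estimates $\mathfrak e\ge q$ and $\int_{\beta_*}c_1(X)\ge0$ reduce this to $(-n_{\what E}+g_{\what E}+q)(d-3)\ge q(r-1)$, which fails in each case. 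For instance, in (IV) one has $g_{\what E}=0$, $n_{\what E}\ge1$ and $r-1\ge d-3$.

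Your count on the $P$-side is equivalent: after the same two estimates it reads $(1-g_i)(d-3)\le q_i(\dim Z-2)$ per component. However, you would have to repeat it separately for $\wtil P$. Note also that the $P$-side terms die because their virtual dimension exceeds the at most $q\dim E$ supplied by relative insertions, not because the moduli space is "too small". Finally, the positivity you actually need is exactly $\int_{\beta_*}c_1(X)\ge0$. The paper imposes this as its hypothesis (iii) rather than extracting it from semipositivity.
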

In Appendix~\ref{sec:Appblowup1}, we give an analogous algebro-geometric proof of Theorem \ref{thm:Hublowup} in a slightly more general setting.

\begin{rmk}
 \label{rmk: beta not zero}
 The assumption $k\ge 1$ or $\beta\neq 0$ in case (II) is missing in Hu's original work. As explicitly noted in later works \cite{HHKQ18}, this assumption is necessary. For example, the following simple computation gives a counterexample to case (II) without this assumption. Let $X$ be a smooth projective threefold, and let $\wtil X$ be its blow-up at a point. Recall that for $g\geq 2$,
 \begin{equation*}
     \langle\text{ }\rangle_{g,0}^{X}=\frac{(-1)^g}{2}\left(\int_X\left(c_3(X)-c_1(X)c_2(X)\right)\right)\int_{\overline\cM_{g}}\lambda_{g-1}^3
 \end{equation*}
 as stated in \cite[Section 2.1]{MNOP06}. With $\int_{\wtil X}c_3(\wtil X)=\int_Xc_3(X)+2$, we get
 \begin{equation*}
     \langle\text{ }\rangle_{g,0}^{\wtil X}=\langle\text{ }\rangle_{g,0}^{X}+(-1)^g\int_{\overline\cM_{g}}\lambda_{g-1}^{3}.
 \end{equation*}
  A similar computation appears in \cite{ILLW12}.
 \end{rmk}

Hu further conjectured in \cite[p. 711]{Hu00} that \eqref{conj:blowup1} will also hold when $g > 1$ and $Z$ is a point.  
\begin{conj}[Hu]
\label{conj:Hu}
    The equation \eqref{conj:blowup1} holds for every genus when $Z=\pt$ and $\beta\neq 0$ (the condition $\beta\neq 0$ is missing in Hu's original work; see Remark~\ref{rmk: beta not zero}).
\end{conj}

Meanwhile, for a convex variety $X$ and a finite set of points $Z$, Gathmann proved a different genus-zero blow-up formula \cite[Corollary 3.2]{Gat01}. It suffices to take $Z$ to be a single point. 
\begin{thm}[Gathmann] \label{thm:Gathmann1}
For $\beta\neq 0$,
\begin{equation} \label{eq:Gathmann1}
\langle \gamma_1,\cdots, \gamma_k, [\pt] \rangle^X_{0,\beta} = \langle \pi^*\gamma_1, \cdots, \pi^*\gamma_k \rangle^{\wtil{X}}_{0, \pi^! \beta - \ell}    
\end{equation}
where $\ell$ the homology class of a line in a fiber of the projective bundle $\PP(N_{Z/X}) \to Z$. 
\end{thm}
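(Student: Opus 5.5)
This is Gathmann's formula \eqref{eq:Gathmann1}, with $X$ convex, $Z$ a point, genus zero and $\beta\neq 0$. I would prove it by a degeneration to the projectivized normal cone, which here gives a point-incidence versus exceptional-tangency correspondence. The plan has two steps. First, identify the right-hand side with a relative invariant of $(\wtil X,E)$ in which the curve is required to meet $E$ at a single specified point with contact order $1$. Second, identify that relative invariant with the point-insertion invariant on $X$.

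\textbf{Step 1: relative invariants with contact order $1$.} Write $n=\dim X$ and $E\cong\PP^{n-1}$. The class $\pi^!\beta-\ell$ satisfies $(\pi^!\beta-\ell)\cdot E = 1$, since $\ell\cdot E=-1$ and $\pi^!\beta\cdot E=0$. Thus a generic curve in this class meets $E$ transversally at exactly one point.

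I would compare $\overline{\cM}_{0,k}(\wtil X,\pi^!\beta-\ell)$ with the relative moduli space $\overline{\cM}_{0,k}(\wtil X/E,\pi^!\beta-\ell;(1))$, which carries one contact marking of order $1$ and no constraint on the contact point. By convexity of $X$, every genus-zero map to $X$ is unobstructed. For maps to $\wtil X$, the only obstructions come from components lying in $E$. A component of degree $d\ell$ in $E$ contributes $N_{E/\wtil X}=\cO(-1)$, and one checks that $H^1$ of the pullback vanishes on genus-zero trees attached to a curve meeting $E$ once. Hence both spaces are of expected dimension
\[
\int_{\pi^!\beta-\ell} c_1(T_{\wtil X}) + n - 3 + k = c_1(T_X)\cdot\beta - (n-1) + n-3+k.
\]
I would then argue that the absolute and relative invariants agree, either by Gathmann's relative/absolute comparison for contact order $1$, or by noting that components sinking into $E$ occur in positive codimension. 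In this comparison all the $\pi^*\gamma_i$ can be moved off $E$.

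\textbf{Step 2: the point insertion on $X$.} Degenerate $X$ to the normal cone of the point, $X\rightsquigarrow \wtil X\cup_E \PP^n$, and apply the degeneration formula of \cite{Li02} to $\langle\gamma_1,\dots,\gamma_k,[\pt]\rangle^X_{0,\beta}$. Put the point class inside $\PP^n$ away from $E$, and put the $\gamma_i$ on the $\wtil X$ side as the classes $\pi^*\gamma_i$.

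In the decomposition, the $\PP^n$ side must contain a curve through a generic point. A genus-zero curve in $\PP^n$ of degree $d$ relative to the hyperplane $E$ has total contact order $d$. By dimension counting, the only configuration that survives the point constraint is a single line through the point meeting $E$ once, with contact order $1$. This relative invariant of $(\PP^n,E)$ has its contact point determined by the line, and it contributes the class of a point on $E$.

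On the $\wtil X$ side we are then left with the class $\pi^!\beta-\ell$, one contact point of order $1$, and the $\pi^*\gamma_i$ insertions. The diagonal splitting of $E$ pairs the point class on $E$ with the fundamental class of $E$ on the $\wtil X$ side, so the $\wtil X$ contact point is unconstrained, which is exactly the invariant of Step 1. Contributions from curves with all degree on the $\PP^n$ side vanish because $\beta\neq 0$ and the $\gamma_i$ lie on $\wtil X$.

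\textbf{Main obstacle.} The hard part is the vanishing of all other splittings in Step 2: several contact points, higher contact orders, or curves of degree $>1$ in $\PP^n$. I would try a dimension count first. Each extra relative marking of contact $m$ costs $m$ in $\PP^n$ and gains $m-1$ on the $\wtil X$ side, and by convexity both sides are of expected dimension. This should force any configuration other than the single line to have a negative-dimensional factor.

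If convexity alone does not give unobstructedness on the $\wtil X$ side (the normal bundle $\cO(-1)$ of $E$ can create $H^1$ for multiple covers of lines in $E$), I would fall back on Gathmann's explicit analysis. There the curve classes $\pi^!\beta - d\ell$ with $d\ge 2$ are shown to have moduli spaces whose virtual class vanishes after intersecting with the pulled-back insertions, via the obstruction bundle $R^1\pi_*f^*\cO_E(-1)$.
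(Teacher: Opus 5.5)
Your Step 2 is sound, and it is easier than you fear: it is a pure virtual-dimension count and needs no convexity. A $\PP^n$-side component of degree $d_j$ with $q_j\le d_j$ contact points and $k_j$ markings has virtual dimension $n-3+nd_j+q_j+k_j$. Its relative insertions have degree at most $q_j(n-1)$. Without the point marking this is impossible. With the point marking one gets $nd_j-2\le q_j(n-2)\le d_j(n-2)$, which forces $d_j=q_j=1$ and the point class on $E$.

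The genuine gap is Step 1. Your claim that $H^1$ vanishes is false. The space $\overline\cM_{0,k}(\wtil X,\pi^!\beta-\ell)$ also contains maps whose non-exceptional part has class $\pi^!\beta-e\ell$ with $e\ge 3$, joined through components in $E$ of total class $(e-1)\ell$. For example, take $X=\PP^2$ and $\beta=3H$, and join the proper transforms of three lines through $Z$ by a double cover $C_0\to E\cong\PP^1$. Then $H^1(C_0,f_0^*N_{E/\wtil X})=H^1(\cO_{\PP^1}(-2))\neq 0$. The normalization sequence makes $H^1(C,f^*T_{\wtil X})\to H^1(C_0,f_0^*T_{\wtil X})\to H^1(C_0,f_0^*N_{E/\wtil X})$ surjective, so this obstruction survives on the whole tree.

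Hence ``expected dimension'' does not follow. Without it, ``positive codimension of the $E$-loci'' says nothing about virtual classes. Gathmann's absolute/relative comparison is for very ample hypersurfaces, not for a divisor with $N_{E/\wtil X}=\cO_E(-1)$. This is exactly the second arrow of \eqref{eq:degn scheme}, which the paper says lies outside its positivity method. The analogous comparison for $(\wtil X,E)$ genuinely fails in genus two: this is the $1/128$ term of Section~\ref{sec:counterex of Hu's conj}. So any proof of Step 1 must use genus zero in an essential way, and yours does not.

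One way to close the gap is to apply Li's formula a second time, to $\wtil X\rightsquigarrow\wtil X\cup_E\PP_E(\cO_E(-1)\oplus 1)$. All markings go to the $\wtil X$ side, since $\pi^*\gamma_i$ restricts to zero on the bubble for $\deg\gamma_i>0$. Suppose that side has $n_1\ge 1$ components (using $\beta\neq0$) and $q\ge n_1$ contact points (using the tree structure) of total order $e\ge q$. Then its relative insertions must have total degree $n_1(n-3)+q-ne+2\le 2-2n_1$. This forces $n_1=q=e=1$ with relative insertion $1$. The bubble factor is then the fiber class through a fixed point of the gluing divisor, which equals $1$. Combined with your Step 2, this proves the formula for a point center without convexity.

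The paper avoids relative invariants altogether. It localizes $\overline\cM_{0,k+1}(W,\pi^!\beta)$ with the insertion $\ev_{k+1}^*[\cZ]$. The only surviving correction locus is $\Gamma_{\Gat}$, whose virtual class is $\ev_{k+1}^*[E]\cap[\overline\cM_{0,k+1}(\wtil X,\pi^!\beta-\ell)]^{\vir}$ by Corollary~\ref{cor:M_Gat}. This lands directly on the absolute invariant via the divisor equation, and the argument works for any positive center $Z$.
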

Later, Hu extended this result to any compact symplectic manifold $X$ \cite[Theorem 1.4]{Hu00}.

We can understand the geometric meaning of this theorem as follows. The GW invariant on the left-hand side of \eqref{eq:Gathmann1} heuristically counts curves $C$ which intersect $A_1,\cdots,A_k$ and the point $Z$. Now, consider the proper transform $\wtil{C} \subset \wtil{X}$ of this curve $C$. Then, $\wtil{C}$ is a degree $\pi^!\beta - \ell$ curve, intersecting $\pi^{-1}(A_1), \cdots, \pi^{-1}(A_k)$ and the exceptional divisor $E \subset \wtil{X}$. The GW invariant $$\langle \pi^*\gamma_1, \cdots, \pi^*\gamma_k, [E] \rangle^{\wtil{X}}_{0, \pi^! \beta - \ell}$$ counts such curves $\wtil{C}$. This coincides with the GW invariant on the right-hand side of \eqref{eq:Gathmann1} by the divisor equation. This
suggests the following formula holds for a general blow-up locus $Z$ (not necessarily a finite set of points) and $\beta\neq 0$:

\begin{equation} \label{conj:genGathmann}
\langle \gamma_1,\cdots, \gamma_k, [Z] \rangle^X_{0,\beta} = \langle \pi^*\gamma_1, \cdots, \pi^*\gamma_k \rangle^{\wtil{X}}_{0, \pi^! \beta - \ell}    
\end{equation}

\medskip

We can continue the geometric picture above as follows. Let us consider a curve $C$ that intersects $A_1,\cdots,A_k$ and meets $Z$ with total intersection multiplicity $M$. Then, the proper transform $\wtil{C}$ has degree $\pi^! \beta - M \ell$, and one may attempt to establish a correspondence as above. However, for a curve $\wtil{C} \subset \wtil X$ of degree $\pi^! \beta + M \ell$ where $M > 0$, there is no natural choice of a corresponding curve $C \subset X$ of degree $\beta$ whose proper transform is $\wtil{C}$. This suggests the vanishing when $M > 0$:
\begin{equation} \label{conj:vanishing}
\langle \pi^*\gamma_1, \cdots, \pi^*\gamma_k \rangle^{\wtil{X}}_{0, \pi^! \beta + M \ell} = 0.    
\end{equation}
Note that Gathmann proved that this vanishing when $X$ is convex and $Z$ is a finite set of points \cite[Proposition 3.1]{Gat01}. Recently, this was proven in \cite[Theorem 1.1]{CDF26} when $Z$ satisfies the positivity condition \eqref{eq:positivity1}, using orbifold Gromov--Witten theory and root stacks.

\subsection{Summary of results}
\label{sec:Introduction}
Let $X$ be a smooth projective variety of dimension $d$ and let $Z \subset X$ be a smooth subvariety of codimension $r \geq 2$.
We will first consider a blow-up formula for genus 0 Gromov--Witten invariants, under the positivity condition on the blow-up center $Z \subset X$. Recall the morphism $\rH_2(X;\Z) \to \rH_2(\wtil X;\Z)$ given by 
$$\beta \mapsto \pi^! \beta, \ \ \textrm{for $\beta \in \rH_2(X;\Z)$}.$$

In this paper, we prove the following blow-up formulas assuming positivity of the blow-up center $Z \subset X$. We denote the number of markings by $k$. We always assume that $\overline\cM_{0,k}(X,\beta)$ is nonempty.

\begin{enumerate}
\item We have
\begin{equation*}
\pi_*[\overline\cM_{0,k}(\wtil X,\pi^!\beta)]^{\vir}=[\overline\cM_{0,k}(X,\beta)]^{\vir}.   
\end{equation*}

\item
For $M > 0$, we have
\begin{equation*}
\pi_*[\overline\cM_{0,k}(\wtil X,\pi^!\beta+M\ell)]^{\vir}=0.
\end{equation*}

\end{enumerate}
These blow-up formulas are established in Theorems~\ref{thm:main1} and \ref{thm:main2}. Counterexamples to these theorems when the positivity assumption is omitted are given in Examples~\ref{exam:counterex for non-positive} and \ref{ex:counterex-vanishing}, respectively.

\begin{rmk}
In \cite{Lai09}, the author assumed that the normal bundle $N_{Z/X}$ is convex, which means that for every $f : \PP^1 \to Z$, $f^* N_{Z/X}$ splits as a direct sum of line bundles of non-negative degree over $\PP^1$. Therefore, convexity of $N_{Z/X}$ implies the positivity condition.
\end{rmk}
\smallskip
\noindent\textit{Idea of the proof.} We use a standard master space containing the moduli spaces of stable maps to $X$ and to $\wtil{X}$. Virtual localization produces contributions from its fixed loci (Section~\ref{sec: localization}). Among them, loci of sufficiently large virtual codimension do not contribute; this is where the positivity of the blow-up center $Z$ enters the argument. In fact, a standard computation shows that fixed loci with only stable vertex contributions do not contribute under this assumption. However, unstable vertices, especially univalent ones, prevent dimensional vanishing in general. To handle these loci, we factor the relevant morphism through a contraction of free leaves. The contraction is virtually smooth and has sufficiently large relative virtual dimension, yielding the required vanishing by virtual pushforward (Section~\ref{sec: Type IV vanishing}). \\

Variants of the master space construction also yield the relative blow-up formula in Theorem~\ref{thm: relative blow-up formula}. This generalizes the relative blow-up formula in \cite[Section 5.1]{Lai09}. As another application, we prove a generalization \eqref{conj:genGathmann} of Gathmann's formula under a positivity condition on the normal bundle $N_{Z/X}$, in Theorem~\ref{thm:Gathmann}. \\

As recalled above in Theorem \ref{thm:Hublowup}, equation \eqref{conj:blowup1} was proved in \cite{Hu00} and \cite{Hu01} in cases (I) to (IV) using degeneration. More precisely, Hu used relative GW invariants of $\overline\cM_{\Psi^\circ}(\wtil X, E)$ as an intermediate comparison. Here $\Psi^\circ$ is the admissible graph with a single vertex of genus $g$, $k$ legs, no roots, and degree $\pi^!\beta$. The following diagram is a schematic summary:
 \begin{equation}
 \label{eq:degn scheme}    \langle\gamma_1,\cdots,\gamma_k\rangle_{g,\beta}^{X}\leftrightarrow\langle\pi^*\gamma_1,\cdots,\pi^*\gamma_k\rangle_{\Psi^\circ}^{(\wtil X,E)}\leftrightarrow\langle\pi^*\gamma_1,\cdots,\pi^*\gamma_k\rangle_{g,\pi^!\beta}^{\wtil X}
 \end{equation}
 for $\gamma_i\in \rH^*(X;\Z)$.
 The first correspondence uses a degeneration formula connecting $X$ and $\wtil X\cup_E \what E$, where $\what E=\PP_Z(N_{Z/X}\oplus 1)$. The second correspondence uses a degeneration connecting $\wtil X$ and $\wtil X\cup_{E}\PP_{E}(\cO(-1)\oplus 1)$. \\

 We call the comparison arising from the first degeneration the absolute--relative correspondence. Theorem~\ref{thm: main Abs-Rel corr} extends this comparison from point centers to positive centers in genus at most one and lifts it to an identity of virtual cycles, with correction terms in the supportive genus-one case. Theorem~\ref{thm: all genus Abs-Rel corr} gives an analogous higher-genus identity under the assumptions  $\dim Z\geq4$ and $g$-positivity (see Definition~\ref{defn:positivity for all g}).\\
 
 For the second degeneration, the normal bundle $\cO_E(-1)$ does not satisfy the positivity assumptions used in our argument. We expect that controlling its localization contributions therefore requires a different approach.\\

 The numerical consequence of Theorem~\ref{thm:main1} requires both genus zero and positivity of $Z$. If either assumption is dropped, we provide the following counterexamples:
 \begin{itemize}
     \item [(a)] If $g=0$ but $Z$ is not positive: see Example~\ref{exam:counterex for non-positive}.
     \item[(b)] If $g>0$: see Remark~\ref{rmk:counterex for case ii} ($Z$ is positive in this counterexample). 
 \end{itemize}
 Moreover, we disprove Conjecture~\ref{conj:Hu} in Section~\ref{sec:counterex of Hu's conj}. The counterexample shows that the condition $g\leq 1$ in case (I) is sharp, and the condition $d\leq3$ in case (II) cannot be omitted.

\subsection{Acknowledgements}
The authors thank Jeongseok Oh for many valuable discussions on this subject over years. Sanghyeon Lee thanks Yuan-Pin Lee for introducing him to various problems concerning genus zero Gromov--Witten theory of blow-ups in late 2023.
Seungjae Yun thanks Jeongseok Oh for introducing him to this problem, and for many stimulating conversations.

\section{Proof of the blow-up formula in genus zero}
\label{sec: proof}
Let $W \coloneq \Bl_{Z \times \set{0}} (X \times \PP^1)$, where $0:= [1:0] \in \PP^1$ and $\infty:= [0:1] \in \PP^1$. For an effective curve class $\beta \in \rH_2(X;\Z)$, we define the master space as a moduli space of stable maps to $W$. In genus zero, this master space was used in \cite{Iri23} to prove a decomposition theorem for quantum cohomology of blow-ups. It is also used in \cite{FL19} to obtain an algorithm for computing higher-genus Gromov--Witten invariants of smooth hypersurfaces.

Consider a $\C^*$-action on $\PP^1$ given by $\lambda \cdot [a:b] := [\lambda a : b]$ for $\lambda \in \C^*$ and $[a:b] \in \PP^1$. This naturally induces $\C^*$-actions on $X \times \PP^1$ and $W$. The fixed locus of $W$ is given by
\begin{equation*}
    W^{\C^*} = X_\infty \sqcup Z_* \sqcup \wtil X , 
\end{equation*}
where $X_\infty \subset W \times_{\PP^1} \set{\infty}$, $\wtil X \subset W \times_{\PP^1} \set{0}$, and $Z_* = \PP_Z(0 \oplus 1) \subset \what{E}$, where $\what{E} \cong \PP_Z(N_{Z/X} \oplus 1)$ is the exceptional divisor. Note that $X_\infty$ is naturally isomorphic to $X$ and $Z_*$ is naturally isomorphic to $Z$. Let $E$ be the exceptional divisor of $\wtil X$. Figure~\ref{figure 1} illustrates the geometry.

\begin{figure}[h]
\tikzset{every picture/.style={line width=0.75pt}} 
\begin{tikzpicture}[x=0.75pt,y=0.75pt,yscale=-1,xscale=1]

\draw   (114.35,115.98) -- (114.35,22.77) -- (170.31,62.72) -- (170.31,155.92) -- cycle ;
\draw   (258.35,108.12) -- (258.35,13.77) -- (314.31,52.57) -- (314.31,146.92) -- cycle ;
\draw   (129.33,95.5) -- (129.33,64.1) -- (155.33,83.19) -- (155.33,114.6) -- cycle ;
\draw [line width=1.5]    (202.33,76.5) -- (228.33,95.6) ;
\draw    (155.33,83.19) -- (228.33,95.6) ;
\draw    (129.33,64.1) -- (202.33,76.5) ;
\draw  [dash pattern={on 0.84pt off 2.51pt}]  (129.33,95.5) -- (202.33,76.5) ;
\draw    (155.33,114.6) -- (228.33,95.6) ;

\draw (174,149) node [anchor=north west][inner sep=0.75pt]   [align=left] {0};
\draw (321,145) node [anchor=north west][inner sep=0.75pt]   [align=left] {$\displaystyle \infty $};
\draw (126,138) node [anchor=north west][inner sep=0.75pt]   [align=left] {$\displaystyle \widetilde{X}$};
\draw (285,139) node [anchor=north west][inner sep=0.75pt]   [align=left] {$\displaystyle X_\infty$};
\draw (183,53) node [anchor=north west][inner sep=0.75pt]  [font=\footnotesize] [align=left] {$Z_*$};
\draw (134,83) node [anchor=north west][inner sep=0.75pt]  [font=\footnotesize] [align=left] {$\displaystyle E$};
\draw (133,71) node [anchor=north west][inner sep=0.75pt]  [font=\tiny] [align=left] {};
\draw (133,104) node [anchor=north west][inner sep=0.75pt]  [font=\tiny] [align=left] {};
\draw (216,79) node [anchor=north west][inner sep=0.75pt]  [font=\tiny] [align=left] {};
\draw (187,109) node [anchor=north west][inner sep=0.75pt]  [font=\tiny] [align=left] {};
\draw (156,95) node [anchor=north west][inner sep=0.75pt]  [font=\tiny] [align=left] {};
\draw (122,78) node [anchor=north west][inner sep=0.75pt]  [font=\tiny] [align=left] {};
\draw (181,89) node [anchor=north west][inner sep=0.75pt]  [font=\tiny] [align=left] {};

\end{tikzpicture}
\caption{The structure of $W$}
\label{figure 1}
\end{figure}
This naturally induces a $\C^*$-action on $\overline\cM(W)$. We specify the curve class in Section~\ref{sec: localization}. Note that we have
\begin{equation*}
    \rH_2(W;\Z) = \rH_2(X \times \PP^1;\Z) \oplus \Z[\ell]
\end{equation*}
where $\ell$ is the class of a line in a fiber of the projective bundle $E$. Let $L$ be the class of $\PP^1$, which is the second factor of the product $X \times \PP^1$. Then we have
\begin{equation*}
\rH_2(W;\Z) = \rH_2(X; \Z) \oplus \Z[\ell] \oplus \Z[L].
\end{equation*}
By abuse of notation, we will denote all three morphisms $W \to X \times \PP^1$, $E\to Z$ and $\wtil{X} \to X$ by $\pi$.
\subsection{Localization of the master space}\label{sec: localization}

Consider the $\C^*$-action on the master space $\overline\cM(W)\coloneqq \overline\cM(W,\beta')$ described above. We consider the following two cases for its degree $\beta'$:
\begin{enumerate}
\item
$\beta' = \pi^! \beta$ for $\beta \in \rH_2(X; \Z)$
\item
$\beta' = \pi^! \beta + M \ell$ for $\beta \in \rH_2(X; \Z)$ and $M > 0$.
\end{enumerate}

First, consider case (1). Because there is no coefficient of $L$ in the curve class $\beta'$, the $\C^*$-fixed loci of $\overline\cM(W)$ are classified as follows. Let $\iota:Z\into X$ be the inclusion.\\

\textbf{Type~I.}
The fixed locus lies over $\infty\in\P^1$.
\begin{equation*}
    \overline\cM_{0,k}(X_\infty, \beta ) \cong \overline\cM_{0,k}(X, \beta )
\end{equation*}

\textbf{Type~II.}
This locus parametrizes maps whose images are contained in $\wtil X$ lying over $0\in\P^1$.
\begin{equation*}
    \overline\cM_{0,k}(\Bl_{Z \times \set{0}} (X \times \set{0}), \pi^!\beta ) \cong \overline\cM_{0,k}(\wtil{X},\pi^! \beta).
\end{equation*}

\textbf{Type~III.}
This locus parametrizes maps whose images are contained in $Z_*\subset \what E$, lying over $0\in\P^1$.
\begin{equation*}
    \overline\cM_{0,k}(Z_*, \beta_Z) \cong \overline\cM_{0,k}(Z, \beta_Z)
\end{equation*}
 where $\beta_Z$ satisfies $\iota_*\beta_Z=\beta$ in $\rH_2(X;\Z)$.\\

\textbf{Type~IV.}
 Other fixed components correspond to decorated graphs with at least one edge, as in \cite{Kon95}. We let  $\overline\cM_{\Gamma}$ be the space corresponding to each localization graph $\Gamma$. A decorated graph $\Gamma$ consists of the following data: a partition, say $\Ver(\Gamma) = V_0 \sqcup V_*$ with a set of legs $\Leg(\Gamma)$ attached to the vertices. The vertices in $V_0$ and $V_*$ correspond to connected fixed subcurves of the domain curve that are mapped to $\wtil X$ and $Z$, respectively. They are allowed to be points. Each edge $e \in \Edg(\Gamma)$ joins a vertex $v \in V_0$ to a vertex $u \in V_*$; its covering degree is denoted by $d_e$. All genus weights of the graph are zero, and each vertex and edge is decorated by degrees $\beta_{v}, \beta_{u}, \beta_{e} \in \rH_2(W;\Z)$. The flags at a vertex are its legs and incident edges; denote their set by $F(v)$ for $v\in \Ver(\Gamma)$. A decorated graph $\Gamma$ corresponds to a $\C^*$-fixed locus $\overline\cM_\Gamma$ parameterizing stable maps $f : (C,x_1,\cdots,x_k) \to W \times_{\PP^1} \set{0}$ satisfying the following conditions:
\begin{itemize}
\item[(a)]
The genus-zero domain curve $C$ is decomposed as
\begin{equation*}
C = \left( \bigcup_{u \in V_*} C_u \right) \cup \left( \bigcup_{v \in V_0} C_v \right) \cup \left( \bigcup_{e \in \Edg} C_e \right),
\end{equation*}
such that each $C_e \cong \PP^1$ connects a component corresponding to a vertex in $V_*$ and a component corresponding to a vertex in $V_0$. Note that we allow $C_v,C_u$ to be a point. 
\item[(b)]
$f({C_u})$ is contained in $\PP_Z(0 \oplus 1) \cong Z$ for each $u\in V_*$, and $f({C_v})$ is contained in $\Bl_{Z\times \set{0}} (X\times \set{0} ) \cong \wtil{X}$ for each $v\in V_0$. The restriction $f|_{C_e}$ is a ramified cover of a line by $\PP^1 \cong C_e$ joining a point $x \in \PP(N_{Z/X}\oplus 0) \subset \wtil{X}$ and the point $\pi(x) \in \PP_Z(0 \oplus 1) \cong Z$. Here, $x$ and $\pi(x)$ are the two fixed endpoints.  
\end{itemize}
A direct computation of virtual dimension shows the following.
\begin{equation}
\label{eq:vdim1}
\vd\overline\cM_{0,k}(X,\beta) = \vd\overline\cM_{0,k}(\wtil{X},\pi^!\beta) = \vd\overline\cM(W) - 1. 
\end{equation}
Therefore, the virtual dimensions of Type I, II fixed loci are the same as the virtual dimension of the master space $\overline\cM(W)$ minus $1$.

On the other hand, we have
\begin{align}\label{eq:vdim3}
\vd\overline\cM_{0,k}(Z,\beta_Z) & = (\dim Z - 3) + k - c_1(K_Z)\cdot \beta_Z \\ \nonumber
& = \vd\overline\cM_{0,k}(X,\beta) - ( r + c_1(N_{Z/X} ) \cdot \beta_Z) 
\end{align}
for $\iota_*\beta_Z=\beta$. Here we recall that $r$ is the codimension of $Z \subset X$. Now we introduce the following assumption on $Z$, as in \cite[Definition 1.1]{CD23}.

\begin{defn}
\label{defn:positivity}
We say that the blow-up center $Z \subset X$ is positive if it satisfies $c_1(N_{Z/X}) \cdot f_*[\PP^1] +r > 0$ for any regular map $f : \PP^1 \to Z$.
\end{defn}
\begin{rmk}
\label{rmk: alternative of positivity}
    Positivity of $Z$ is equivalent to assuming $c_1(N_{Z/X}) \cdot f_*[\PP^1]\geq0$ for any regular map $f : \PP^1 \to Z$. If a regular map $f$ had $c_1(N_{Z/X}) \cdot f_*[\PP^1]<0$, a sufficiently high multiple cover of it would contradict the assumption.
\end{rmk}

Assuming positivity of $Z$, the virtual dimension of Type~III fixed loci is strictly smaller than the virtual dimension of Type~I, II loci. We now analyze the contributions of Type~IV loci.\\

Given a localization graph $\Gamma$, recall the data of $V_0$, $V_*$, and $\Edg$ defined above. For $v\in V_0$ and $u\in V_*$, set
\begin{equation*}
    \overline{\mathcal M}_v=\overline{\mathcal M}_{0,n(v)}
    (\wtil X,\beta_v),
    \qquad
    \overline{\mathcal M}_u=\overline{\mathcal M}_{0,n(u)}(Z,\beta_u),
\end{equation*}
where the markings corresponding to the incident edges are included among
the $n(v)$ and $n(u)$ markings. For unstable vertices, we use the convention $\overline\cM_v=\wtil X$ and $\overline\cM_u=Z$. For an edge $e$, let
$\overline{\mathcal M}_e$ denote the fixed component parameterizing the $\C^*$-invariant degree-$d_e$ covers, $f\vert_{C_e}:C_e \to \ell_e$. Its coarse moduli space is naturally isomorphic to $E$. It has endpoint evaluation maps
\begin{equation*}
    \ev_e^-:\overline{\mathcal M}_e\longrightarrow  \wtil X,\qquad
    \ev_e^+:\overline{\mathcal M}_e\longrightarrow Z,
\end{equation*}
where $\ev_e^-$ factors through $E\hookrightarrow \wtil X$ and
$\ev_e^+$ is the composition with $E\to Z$.

Set
\begin{equation*}
    \mathcal V_0=\prod_{v\in V_0}\overline{\mathcal M}_v,
    \qquad
    \mathcal V_*=\prod_{u\in V_*}\overline{\mathcal M}_u,
    \qquad
    \mathcal E_\Gamma=\prod_{e\in \Edg}\overline{\mathcal M}_e .
\end{equation*}
The endpoint evaluations define the morphisms
\begin{equation*}
    \ev_{\mathrm{vert}}:\mathcal V_0\times\mathcal V_*
    \longrightarrow \wtil X^{\Edg}\times Z^{\Edg},\quad
    \ev_{\mathrm{edge}}:\mathcal E_\Gamma\longrightarrow  \wtil X^{\Edg}\times Z^{\Edg}.
\end{equation*}
Then the fixed locus associated to $\Gamma$ is naturally the fiber product
\begin{equation}
\label{eq: moduli level iso}
    \overline{\mathcal M}_\Gamma\cong(\mathcal V_0\times\mathcal V_*) \times_{\wtil X^{\Edg}\times Z^{\Edg}}\mathcal E_\Gamma,
\end{equation}
as in \cite[Section 2.3]{CD23}. Here we are working temporarily with a labeled localization graph $\Gamma$, so that graph-automorphism factors are suppressed. The actual fixed moduli stack is the quotient by $\Aut(\Gamma)$ and has multiple cover factors, which do not alter the obstruction theories. This convention is only used until Lemma~\ref{lem:fixed locus gysin} below. Now we show that the perfect obstruction theory on $\overline{\mathcal M}_\Gamma$ induced
by the fixed part of the perfect obstruction theory of the master space
coincides with the fiber-product perfect obstruction theory. Let
\begin{equation*}
    \Delta_\Gamma: \wtil X^{\Edg}\times Z^{\Edg}
    \longrightarrow\bigl( \wtil X^{\Edg}\times Z^{\Edg}
    \bigr)^2
\end{equation*}
be the diagonal.

\begin{lem} Under the identification \eqref{eq: moduli level iso}, the virtual cycles satisfy:
\label{lem:fixed locus gysin}
\begin{equation}
\label{eq:fixed locus gysin}
    [\overline{\mathcal M}_\Gamma]^{\vir}=\Delta_\Gamma^!
    \left(\bigboxtimes_{v\in V_0}[\overline{\mathcal M}_v]^{\vir}
        \boxtimes\bigboxtimes_{u\in V_*}[\overline{\mathcal M}_u]^{\vir}
        \boxtimes\bigboxtimes_{e\in \Edg}[\overline{\mathcal M}_e]
    \right)
\end{equation}
in $A_*(\overline{\mathcal M}_\Gamma)$, under the identification \eqref{eq: moduli level iso}. Equivalently, writing
\begin{equation*}
    \Delta_\Gamma=\Delta_{ \wtil X^{\Edg}}
    \times\Delta_{Z^{\Edg}},
\end{equation*}
the virtual class is obtained by imposing the matching conditions at the
$\wtil X$- and $Z$-ends of the edges by the corresponding refined
Gysin pullbacks.
\end{lem}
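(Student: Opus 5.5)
The plan is to compare two perfect obstruction theories on $\overline{\mathcal M}_\Gamma$. The first is the $\C^*$-fixed part $\mathbb E_\Gamma^{\mathrm{fix}}$ of the restriction of the obstruction theory $R\pi_*f^*T_W$ of the master space. The second is the fiber-product obstruction theory $\mathbb E_\Gamma^{\times}$ induced from the product of the vertex obstruction theories, the edge moduli (which are smooth, so their obstruction theory is just the tangent complex), and the diagonal $\Delta_\Gamma$. We will show the two agree in the derived category, compatibly with the maps to the cotangent complex. The standard functoriality of virtual classes under fiber products (Behrend--Fantechi, or Kim--Kresch--Pantev) then gives
\[
[\overline{\mathcal M}_\Gamma]^{\vir} = \Delta_\Gamma^!\Bigl(\textstyle\prod [\overline{\mathcal M}_v]^{\vir}\times\prod[\overline{\mathcal M}_u]^{\vir}\times\prod[\overline{\mathcal M}_e]\Bigr).
\]
We work with labeled graphs and absorb automorphisms later, as the paper states.

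\textbf{Step 1.} Use the normalization sequence of the domain at the nodes joining $C_e$ to $C_v$ and $C_u$:
\[
0\to \cO_C\to \bigoplus_{v}\cO_{C_v}\oplus\bigoplus_u\cO_{C_u}\oplus\bigoplus_e\cO_{C_e}\to\bigoplus_{\text{nodes}}\cO_{p}\to0 .
\]
Tensor with $f^*T_W$ and apply $R\Gamma$. The node terms give $T_W|_{p}$, and the $C_v$, $C_u$ terms give $R\Gamma(f^*T_W|_{C_v})$ and so on. Take fixed parts. Since $T_W|_{\wtil X}=T_{\wtil X}\oplus N$ with $N$ of nonzero weight, and likewise $T_W|_{Z_*}=T_Z\oplus(\text{moving})$, the fixed parts reduce to $R\Gamma(f^*T_{\wtil X}|_{C_v})$, $R\Gamma(f^*T_Z|_{C_u})$ and $T_{\wtil X,p}$, $T_{Z,p}$. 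For the edges, $R\Gamma(C_e,f^*T_W)^{\mathrm{fix}}$ is identified with the tangent space of $\overline{\mathcal M}_e$ plus the fixed infinitesimal automorphisms of $C_e$. This is a direct weight computation on $\ell_e\subset\what E$-fibers: the tangent directions along $E$ give $T_E$ at the point, and the fixed part of the degree-$d_e$ cover deformation gives nothing else.

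\textbf{Step 2.} Account for the domain deformations and automorphisms in the tangent–obstruction sequence. The fixed part of the node-smoothing deformations vanishes at edge nodes, since they carry weight $\pm\lambda/d_e$ (sum of tangent weights). Automorphisms and deformations of $C_v$, $C_u$ are all fixed and match those in the vertex moduli. The fixed automorphisms of $C_e$ cancel against the extra term in Step 1. The cone of the comparison is then exactly
\[
\bigoplus_e\bigl(T_{\wtil X}\oplus T_Z\bigr)|_{\text{ends}}[-1],
\]
which is the normal-bundle term contributed by $\Delta_\Gamma$. This yields a quasi-isomorphism $\mathbb E_\Gamma^{\mathrm{fix}}\simeq\mathbb E_\Gamma^\times$.

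\textbf{Step 3.} Unstable vertices, where $\overline{\mathcal M}_v=\wtil X$ or $Z$, are handled case by case. If a univalent vertex is unstable, there is no node, and it contributes no diagonal factor. A bivalent unstable vertex is a single node joining two edges; there the fixed smoothing parameter has weight $\lambda/d_e+\lambda/d_{e'}\ne0$, so it is moving, and the diagonal factor for $\wtil X$ or $Z$ appears once. We will adjust the diagonal convention accordingly and check that the formula remains literally as stated with the conventions $\overline{\mathcal M}_v=\wtil X$ and $\overline{\mathcal M}_u=Z$.

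\textbf{Main obstacle.} The main obstacle is upgrading the quasi-isomorphism to an isomorphism of obstruction theories, meaning compatibility with the maps to $\mathbb L_{\overline{\mathcal M}_\Gamma}$ rather than just abstract equality of complexes. We will get this by building both theories from the same universal-curve construction relative to the Artin stack of prestable curves, and comparing via the morphism gluing the vertex and edge prestable stacks. Étale-locally, that gluing morphism is the fixed locus of the prestable stack, which is smooth. Functoriality (Kim--Kresch--Pantev) then applies.
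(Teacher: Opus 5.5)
Your proposal is correct and follows essentially the same route as the paper. Both arguments use the normalization triangle at the attachment nodes and take fixed parts: $T_{\wtil X}$ and $T_Z$ at the nodes, and $T_E$ plus one fixed automorphism term on each edge, with $H^1(C_e,\cO(-d_e))$ moving. Both treat the node-smoothings as moving, identify the cone with the diagonal normal term, and check the unstable vertices via $\Delta^!([\wtil X]\boxtimes[\overline\cM_e])=[\overline\cM_e]$ and $(\Delta\times\Delta)^!(\alpha_1\boxtimes[\overline\cM_v]\boxtimes\alpha_2)=\Delta^!(\alpha_1\boxtimes\alpha_2)$. Your attention to compatibility with the cotangent complex goes beyond what the paper spells out, since the paper simply asserts the identification. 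Also note that bivalent unstable vertices include the edge-plus-leg case, which you omitted; it is handled exactly like the univalent one.
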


\begin{proof}
Let $f:C\longrightarrow W$ be a stable map corresponding to a geometric point of $\overline{\mathcal M}_\Gamma$.
Normalizing the source curve at all nodes lying between an edge component and a vertex component gives
\begin{equation*}
    C^{\nu}= \coprod_{v\in V_0} C_v \sqcup \coprod_{u\in V_*} C_u
    \sqcup \coprod_{e\in \Edg} C_e .
\end{equation*}
The restriction of $f$ to $C_v$ lies in $\wtil X$, the restriction
to $C_u$ lies in $Z$, and the restriction to $C_e$ is the corresponding
invariant cover. Let $\mathrm{Nodes}(\Gamma)$ be the set of attachment nodes in this normalization.

We first compare the deformation complexes. Write $\rho$ for the universal-curve projection, and use subscripts for the corresponding projections on vertex and edge factors. The standard normalization sequence
gives a distinguished triangle
\begin{align}
R\rho_*f^*T_W\longrightarrow&\bigoplus_{v\in V_0}R\rho_{v*}f_v^*T_W\oplus\bigoplus_{u\in V_*}R\rho_{u*}f_u^*T_W\oplus\bigoplus_{e\in \Edg}R\rho_{e*}f_e^*T_W
\nonumber\\
&\longrightarrow
\bigoplus_{\mathfrak n\in\mathrm{Nodes}(\Gamma)}T_W|_{f(\mathfrak n)}\xrightarrow{+1}.
\label{eq:nor triangle}
\end{align}
We now take the $\mathbb C^*$-fixed part. At a node $\mathfrak n$ joining an edge component to a $\wtil X$-vertex component,  $\bigl(T_W|_{f(\mathfrak n)}\bigr)^{\fix}=T_{
\wtil X}|_{f(\mathfrak n)}$.
At a node $\mathfrak n $ joining an edge component to a $*$-vertex component, $\bigl(T_W|_{f(\mathfrak n)}\bigr)^{\fix}=T_Z|_{f(\mathfrak n)}$.
Therefore, the fixed part of the last term of
\eqref{eq:nor triangle} is
\begin{equation*}
    \bigoplus_{e\in \Edg}
    \left(T_{\wtil X}|_{\ev_e^-}\oplus T_Z|_{\ev_e^+}
    \right).
\end{equation*}
Taking fixed parts of \eqref{eq:nor triangle} gives
\begin{align}
\left(R\rho_*f^*T_W\right)^{\fix}\longrightarrow {}&\bigoplus_{v\in V_0}R\rho_{v*}f_v^*T_{\wtil X}
\oplus\bigoplus_{u\in V_*}R\rho_{u*}f_u^*T_Z
\oplus\bigoplus_{e\in \Edg}\left(R\rho_{e*}f_e^*T_W\right)^{\fix}
\nonumber\\
&\longrightarrow\bigoplus_{e\in \Edg}\left(T_{\wtil X}|_{\ev_e^-}\oplus T_Z|_{\ev_e^+}\right)
\xrightarrow{+1}.
\label{eq:fixed nor triangle}
\end{align}
Let $\ell$ be the image of $C_e$; then the factor $(R\rho_{e*}f_e^*N_{\ell/W})^{\fix}$ is canonically identified with $(\ev_{e}^-)^*T_E \oplus R\Gamma(C_e, f_e^* N_{\what{E}/W})^{\fix}$.
Since $f_e^* N_{\what{E}/W} \cong \cO_{\PP^1}(-d_e)$, we get $H^0(C_e, f_e^* N_{\what E/W})=0$ while $H^1(C_e, f_e^* N_{\what E/W}) \cong H^0(C_e, \cO_{C_e}(d_e - 2))^\vee$ is in the moving part. The extra summand $\C \cong (R\rho_{e*}f_e^*T_{\ell})^{\fix}$ in $\left(R\rho_{e*}f_e^*T_W\right)^{\fix}$ cancels with the infinitesimal automorphism term for the edge. For the absolute dual perfect obstruction theory, the smoothing parameters of the nodes joining an invariant edge to a vertex have nonzero $\mathbb C^*$-weights and hence are in the moving part. Note that $\cE_\Gamma$ is smooth and $T_{\cE_\Gamma}$ is naturally isomorphic to $(\ev_{e}^-)^*T_E$. 
On the other hand, we can consider the natural virtual tangent complex of $\overline{\cM}_\Gamma \cong (\mathcal V_0\times\mathcal V_*)
    \times_{\wtil X^{\Edg}\times Z^{\Edg}}
    \mathcal E_\Gamma$ defined by the cone:
\begin{equation*}
    \TT_{\overline{\cM}_\Gamma} :=\Cone\left(\mathbb T^{\vir}_{\mathcal V_0\times\mathcal V_*}\oplus T_{\mathcal E_\Gamma}\longrightarrow T_{\wtil X^{\Edg}\times Z^{\Edg}}\right)[-1].
\end{equation*}

The preceding computation identifies the fixed part of the restricted virtual tangent complex with $\mathbb{T}_{\overline{\cM}_\Gamma}$. This proves \eqref{eq:fixed locus gysin}.

Note that the above argument also covers the case with unstable vertices. For a flag $(v,e)$, we have $\Delta^!([\overline\cM_v]\boxtimes[\cM_e])=[\cM_e]$ for univalent $v$ and $(\Delta\times\Delta)^!(\alpha_1\boxtimes[\overline\cM_v]\boxtimes\alpha_2)=\Delta^!(\alpha_1\boxtimes\alpha_2)$ for bivalent $v$.
\end{proof}

This lemma allows us to compute the virtual dimension of Type~IV fixed loci. First, let $U_1$ be the number of univalent unstable vertices and $U_2$ be the number of bivalent unstable vertices. The valence counts markings as well as incident edges. Let $n=|V_0|$, $m=|V_*|$, $h=|\Edg|$, and $\mathfrak e$ ($\geq h$) be the total degree of the edges. Then, $h=n+m-1$ since the genus is zero. Let $\beta_*=\sum_{u\in V_*}\beta_u$ be the total degree of $*$-vertices and denote $$\delta_\Gamma=c_1(N_{Z/X})\cdot \beta_*+r.$$

\begin{lem}\label{lemm:vdim1}
The virtual codimension of Type~IV fixed loci is given by:
\begin{equation*}
    \vcd \overline\cM_\Gamma-1=\delta_\Gamma +r(m-1)+h+(r-1)(\mathfrak e-h)-2U_1-U_2.
\end{equation*}
In particular, positivity of $Z$ implies $\delta_\Gamma>0$, and we obtain
\begin{equation*}
    \vcd \overline\cM_\Gamma-1> r(m-1)+h-2U_1-U_2.
\end{equation*}
\end{lem}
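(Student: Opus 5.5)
The plan is to compute $\vd\overline\cM_\Gamma$ directly from Lemma~\ref{lem:fixed locus gysin} and compare it with $\vd\overline\cM(W)=\vd\overline\cM_{0,k}(X,\beta)+1$ from \eqref{eq:vdim1}. Since $\Delta_\Gamma$ is a regular embedding of codimension $h(\dim\wtil X+\dim Z)=h(2d-r)$, Lemma~\ref{lem:fixed locus gysin} gives
\begin{equation*}
\vd\overline\cM_\Gamma=\sum_{v\in V_0}\vd\overline\cM_v+\sum_{u\in V_*}\vd\overline\cM_u+h\,(d-1)-h(2d-r),
\end{equation*}
where $\dim\overline\cM_e=\dim E=d-1$ for each edge. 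The codimension is then $\vcd\overline\cM_\Gamma=\vd\overline\cM(W)-\vd\overline\cM_\Gamma$, and the identity will follow from simplifying the difference. The inequality is then immediate from $\delta_\Gamma>0$ under Definition~\ref{defn:positivity} and $\mathfrak e\ge h$, $r\ge 2$.

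\textbf{Step 1: treat unstable vertices uniformly.} Write every vertex term by the stable formula: $d-3+n(v)+c_1(T_{\wtil X})\cdot\beta_v$ for $v\in V_0$, and $\dim Z-3+n(u)+c_1(T_Z)\cdot\beta_u$ for $u\in V_*$. For an unstable vertex the true contribution is $\dim\wtil X$ or $\dim Z$, with $\beta=0$. This is the formula value plus $2$ when $n=1$ and plus $1$ when $n=2$. So the true sum equals the uniform sum plus $2U_1+U_2$, which produces the $-2U_1-U_2$ in the codimension. The marking count is $\sum n(v)+\sum n(u)=k+2h$, and I will use $n+m=h+1$.

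\textbf{Step 2: first Chern class bookkeeping.} Write $\beta'=\pi^!\beta=\sum_v\beta_v+\sum_u\beta_u+\sum_e d_e[\ell_e]$ in $\rH_2(W)$, where $\ell_e$ is the fiber line of $\what E\cong\PP_Z(N_{Z/X}\oplus1)$ joining $\PP(N\oplus0)$ to $Z_*$. Then $c_1(T_W)\cdot\beta'=c_1(T_X)\cdot\beta$, and $\vd\overline\cM(W)=(d+1)-3+k+c_1(T_X)\cdot\beta$. I will evaluate $c_1(T_W)$ on each piece:
\begin{itemize}
\item For $\beta_v\subset\wtil X$: $c_1(T_W)=c_1(T_{\wtil X})+c_1(N_{\wtil X/W})$, where $N_{\wtil X/W}=\cO(-E)$ since the central fiber is $\wtil X+\what E$.
\item For $\beta_u\subset Z_*$: $c_1(T_W)=c_1(T_Z)+c_1(N_{Z/X})+c_1(\text{normal of }Z_*\text{ in }W)$. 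The extra normal pieces restrict to $N_{Z/X}$ twisted trivially plus a line with trivial degree along $Z_*$.
\item For an edge line: $c_1(T_W)\cdot\ell_e=c_1(T_{\what E})\cdot\ell_e+\what E\cdot\ell_e=(r+1)-1=r$.
\end{itemize}
Substituting these and using $\sum_v\beta_v\cdot E$ against the edge intersections, I should eliminate $c_1(T_{\wtil X})\cdot\beta_v$ and $c_1(T_Z)\cdot\beta_u$. What remains is $c_1(N_{Z/X})\cdot\beta_*$, a term $r$ per $*$-vertex, and a term linear in $\mathfrak e$ and $h$. Regrouping $r\cdot m$ as $r+r(m-1)$ gives $\delta_\Gamma+r(m-1)$, and the edge terms become $h+(r-1)(\mathfrak e-h)$. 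The shift by $1$ on the left side corresponds to the extra dimension from the $\PP^1$ direction.

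\textbf{Main obstacle.} The delicate part is Step 2: correctly identifying the restrictions of $c_1(T_W)$ to $Z_*$ and to $\wtil X$, and matching the $E$-intersection numbers of the $\beta_v$ with the edge degrees $d_e$. I will first test the formula on the simplest graph, with one $0$-vertex, one univalent $*$-vertex, and one edge of degree $1$, to fix the constants. If they do not match, I will recheck the normal bundle of $Z_*$ in $W$ and the sign of $\what E\cdot\ell_e$ before doing the general sum.
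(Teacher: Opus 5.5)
Your proposal is correct and follows essentially the paper's route. Both arguments sum the vertex virtual dimensions using the stable formula corrected by $2U_1+U_2$ for unstable vertices, add $h(d-1)$ for the edge factors, subtract the matching conditions, and use $h=n+m-1$ together with $\sum_v\beta_v\cdot E=\mathfrak e$. Your evaluation of $c_1(T_W)$ on the pieces of the class decomposition, including $c_1(T_W)\cdot\ell_e=r$, is only a repackaging of the paper's substitution $\beta_v=\pi^!\beta(v)-\mathfrak e_v\ell$ with $c_1(T_{\wtil X})=\pi^*c_1(T_X)-(r-1)E$. One small point: justify $\delta_\Gamma>0$ through Remark~\ref{rmk: alternative of positivity}, which gives $c_1(N_{Z/X})\cdot\beta_*\ge 0$. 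Definition~\ref{defn:positivity} alone only bounds each rational component by $>-r$, and $\beta_*$ can be a sum of several such classes.
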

\begin{proof}
Let $\mathbb E$ be the perfect obstruction theory of $\overline\cM(W)$  relative to the Artin stack of prestable curves with marked points. Let $\Gamma$ be a localization graph with $v\in V_0$ decorated by degree $$\beta_v = \pi^!\beta(v) - \mathfrak e_v \cdot \ell$$ for some $\beta(v) \in \rH_2(X;\Z)$ and $\mathfrak e_v\in \Z$, and $u\in V_*$ decorated by degree $\beta_u \in \rH_2(Z;\Z)$. The curve classes satisfy
\begin{equation*}
   \iota_*(\beta_*)+\sum_{v\in V_0}\beta (v)= \beta\quad\text{and}\quad \sum_{v\in V_0}\mathfrak e_v=\mathfrak e.
\end{equation*}
The contribution of $\bigcup_{u \in V_*} C_u $ to the virtual dimension is
    \begin{equation*}
        \sum_{u \in V_*} \left(\vd \overline\cM_{0,\val(u)}+\rk \mathbb E\vert_{C_{u}}^{\fix}\right)=\sum_{u \in V_*}\vd \overline\cM_{0,\val(u)}(Z,\beta_u).
    \end{equation*}
Similarly, the contribution of $\bigcup_{v \in V_0} C_v $ is $\sum_{v \in V_0}\vd\overline\cM_{0,\val(v)}(\wtil X,\beta_v)$. The contribution of $\bigcup_{e \in \Edg} C_e $ to the virtual dimension is $h\cdot (d-1)$. The node-smoothing parameters have nonzero $\C^*$-weights, 
so we can divide the virtual dimension into contributions of $C_u$, $C_v$, and $C_{e}$. Note that there are matching conditions imposed at the nodes that contribute $(-\dim Z)$ at each edge endpoint attached to a vertex $u\in V_*$ and $(-\dim \wtil X)$ at each edge endpoint attached to $v\in V_0$. Hence, the virtual dimension of $\overline\cM_\Gamma$ is
\begin{align*}
\vd \overline\cM_\Gamma &=n\cdot (d-3) + \sum_{v\in V_0} \left(\int_{\beta(v)} c_1(X) - (r-1)\mathfrak e_v\right) \\
&+ m\cdot(d-r-3) + \left( \sum_{u\in V_*} \int_{\beta_{u}} c_1(Z) \right) -(d-r-1)\cdot h+k+2U_1+U_2\\
&= \vd\overline\cM_{0,k}(X,\beta)-\delta_\Gamma-r(m-1)-h-(r-1)(\mathfrak e-h)+2U_1+U_2
\end{align*}
where we used $h=n+m-1$ in the last equality.
\end{proof}

Let $\Omega$ be the set of localization graphs of Types~III or IV. From the virtual localization formula of \cite{GP99}, we get
\begin{multline}
\label{eq:localization}
    [\overline\cM(W)]^{\vir}= \frac{\iota_*[\overline\cM_{0,k}(X,\beta)]^{\vir}}{e^{\C^*}(N^{\vir}|_{\overline\cM_{0,k}(X,\beta)})} +\frac{\iota_*[\overline\cM_{0,k}(\wtil{X},\pi^!\beta)]^{\vir}}{e^{\C^*}(N^{\vir} |_{\overline\cM_{0,k}(\wtil{X},\pi^!\beta)} )}\\
    +\sum_{\Gamma \in\Omega}\frac{1}{|\Aut(\Gamma)|}\cdot \frac{\iota_*[\overline\cM_\Gamma]^{\vir}}{e^{\C^*}(N^{\vir} \vert_{\overline\cM_\Gamma})}
\end{multline}
in $A_{*}^{\C^*}(\overline\cM(W))\otimes_{\Q[t]}\Q(t)$ where $\iota_*$ denotes the equivariant pushforward from the corresponding fixed locus to $\overline\cM(W)$. We now compute Euler classes of the virtual normal bundles as
\begin{align*}
N^{\vir} |_{\overline\cM_{0,k}(X,\beta)} = R\rho_*(f^* N_{X \times \set{\infty} / W}) = R \rho_*(\cO_{\cC}) \cong [\cO_{\overline\cM_{0,k}(X,\beta)} \otimes \mathbf{t} \to 0]
\end{align*}
where $\rho : \cC \to \overline\cM_{0,k}(X,\beta)$ is the universal curve and $f : \cC \to X \times \set{\infty}$ is the universal morphism; $\mathbf{t}$ is a trivial bundle with weight $1$ via the $\C^*$-action. Thus, we have
\begin{align*}
\frac{1}{e^{\C^*}(N^{\vir}|_{\overline\cM_{0,k}(X,\beta)})} = \frac{1}{t}.
\end{align*}
Moreover, we have $N_{\wtil{X}/W} \cong \cO(-E) \otimes \mathbf{t}^{-1}$ so that
\begin{align*}
\frac{1}{e^{\C^*}(N^{\vir} |_{\overline\cM_{0,k}(\wtil{X},\pi^!\beta)} ) } = \frac{-1}{t} + O(t^{-2}).
\end{align*}
For a Type~IV graph with $\vcd\overline\cM_\Gamma>1$, 
\begin{align*}
\frac{1}{e^{\C^*}(N^{\vir} |_{\overline\cM_\Gamma} ) } =  O(t^{-2}).
\end{align*}
Now, let $p$ be the composite projection $W\to X\times \PP^1\to X$. We also write $p: \overline\cM(W)\to \overline\cM_{0,k}(X,\beta)$ to be the induced morphism between the moduli spaces. Then consider the cycle $$p_*[\overline\cM(W)]^{\vir}\in A_{*}(\overline\cM_{0,k}(X,\beta))$$ whose $\C^*$-equivariant lift is given by taking the equivariant pushforward $p_*$ of \eqref{eq:localization}. Once we show that Type~IV graphs with $\vcd\overline\cM_\Gamma\leq1$ do not contribute to the residue, taking the residue of $p_*[\overline\cM(W)]^{\vir}$ will prove our main theorem (Theorem~\ref{thm:main1}). Hence, our next step is to compute the contributions of Type~IV graphs with $\vcd\overline\cM_\Gamma\leq1$.

\subsection{Type~IV loci with small virtual codimension}
\label{sec: Type IV vanishing}
Throughout this subsection, we assume $a_\Gamma\coloneqq \vcd \overline\cM_\Gamma\leq1$. Let 
\begin{equation*}
    S=n+m-U_1-U_2
\end{equation*}
be the number of stable vertices, before the contractions and forgetting procedures described below. By Lemma~\ref{lemm:vdim1}, we get
\begin{equation*}
    a_\Gamma-1>(S-1)-U_1
\end{equation*}
since $m\geq 1$ and $h=n+m-1$. In particular, we have $U_1\geq1$. Let $V_i^{\un}(\Gamma)$ be the set of vertices decorated with genus zero and degree zero, with $i$ flags attached. Here, flags count both the legs and the incident edges. For $u\in V_1^{\un}(\Gamma)$, its unique incident edge has a free ramification point, denoted $a_u$. Denote
\begin{equation*}
    U_i=|V_i^{\un}(\Gamma)|,\quad A_\Gamma=\{a_u\mid u\in V_1^{\un}(\Gamma)\}.
\end{equation*}
In particular, $|A_\Gamma|=U_1$. Each $a_u$ can be seen as a section of the universal curve over the fixed moduli $\overline\cM_\Gamma$. Now, let $\rho:\mathcal C\to \overline\cM_\Gamma$ be the universal curve and $f:\cC\to W$ be the universal map of $\overline\cM_\Gamma$. Compose the universal map $f$ with $p:W\to X$, and retain the marking sections. The resulting map on a geometric fiber of this universal curve may not be stable, since contracting a nonzero-degree component to a point may make it unstable. After further stabilization, denote the graph by $\what \Gamma$. It has $k+U_1$ legs. The procedure above gives a contraction morphism
\begin{equation*}
    \Phi_\Gamma:\overline\cM_\Gamma\longrightarrow \overline\cM_{\what \Gamma}(X).
\end{equation*}
Here, $\overline\cM_{\what \Gamma}(X)$ can be viewed as a boundary stratum in $ \overline\cM_{0,k+U_1}(X)$ through a gluing morphism. The graph $\what \Gamma$ is obtained by the following operations.
\begin{itemize}
    \item [(1)] A maximal chain of edges (which are contracted by $p$) connecting two stable vertices becomes a nodal edge. During the process, an unstable bivalent vertex with two edges is removed from the chain.
    \item [(2)] A chain ending at $u\in V_1^{\un}(\Gamma)$ becomes a leg $a_u$.
    \item [(3)] A bivalent unstable vertex with one edge and one leg transfers the leg to its unique adjacent vertex.
    \item [(4)] Apply $p_*$ to every vertex degree.
    \item [(5)] Stabilize all genus-zero, degree-zero vertices with at most two flags.
\end{itemize}
In particular, we have the set $A_\Gamma$ of legs of $\what\Gamma$. We can forget these legs (markings) and stabilize once more:
\begin{equation*}
    \overline\Gamma=\st_X(\what \Gamma\setminus A_\Gamma).
\end{equation*}
Vertices that become unstable after discarding $A_\Gamma$ are contracted in the process. Now, for any stable decorated graph $G$, we can define its associated boundary stratum in $\overline\cM(X)$ as
\begin{equation*}
    \overline \cM_G(X)= \left(\prod_{v\in \Ver(G)}\overline\cM_{0,F(v)}(X,\beta(v))\right)\times_{X^{2\cdot |\Edg(G)|}}X^{|\Edg(G)|}.
\end{equation*}
The fibered product is taken via the evaluation maps $\ev:\overline\cM_{0,F(v)}(X,\beta(v))\to X$ at markings indexed by edge flags, and the diagonal map $$\Delta:X^{|\Edg(G)|}\to X^{2\cdot |\Edg(G)|}.$$
Choose an ordering $A_\Gamma=\{a_1,\cdots, a_{U_1}\}$ and define inductively
\begin{equation*}
    G_0=\what \Gamma,\quad G_i=\st_X(G_{i-1}\setminus\{a_i\}).
\end{equation*}
Let $\rho_i:\overline\cM_{G_{i-1}}(X)\to \overline\cM_{G_{i}}(X)$ be the natural stabilization morphism. Returning to our fixed locus, consider the composition of the stabilization morphisms:
\begin{equation*}
    \rho_\Gamma=\rho_{U_1}\circ\cdots\circ \rho_1:\overline\cM_{\what \Gamma}(X)\longrightarrow\overline\cM_{\overline \Gamma}(X).
\end{equation*}
Then, the composition $\overline\cM_{\Gamma}\into \overline\cM_{0,k}(W)\xrightarrow[]{p}\overline\cM_{0,k}(X)$ can be factored as
\begin{equation}
\label{eq: factorization}
    \overline\cM_{\Gamma}\xrightarrow[]{\Phi_\Gamma} \overline\cM_{\what \Gamma}(X)\xrightarrow[]{\rho_\Gamma}\overline\cM_{\overline \Gamma}(X)\xrightarrow[]{\gl_{\overline\Gamma}}\overline\cM_{0,k}(X)
\end{equation}
where $\gl_{\overline \Gamma}$ is the gluing morphism associated with $\overline\Gamma$. One can show that the composition \eqref{eq: factorization} is independent of the ordering of $A_\Gamma$. Now let $U_1^0$ be the number of univalent unstable $0$-vertices and $U_1^*$ be the number of univalent unstable $*$-vertices so that 
\begin{equation*}
    U_1=U_1^0+U_1^*.
\end{equation*}
We further factor $\Phi_\Gamma$. First of all, let $$\mathfrak b: \what \cM_\Gamma\longrightarrow\overline\cM_\Gamma$$ be a finite cover of $\overline\cM_\Gamma$, obtained by labeling every univalent unstable $0$-vertex. In particular, we have $\mathfrak b_*[\what{\cM}_\Gamma]^{\vir} = (\deg\mathfrak b)\cdot [\overline{\cM}_\Gamma]^{\vir}$.

Then, we define $\wtil \Gamma$ to be the graph obtained from $\Gamma$ by replacing each unstable univalent $0$-vertex and its unique edge by an additional marking at the unique adjacent vertex. On the corresponding moduli spaces, the edge factor $\overline\cM_e$ is contracted via its evaluation map $\ev_e^+:\overline\cM_e\longrightarrow Z$, using Lemma~\ref{lem:fixed locus gysin}, leaving an additional marking. Let us define $\overline\cM_{\wtil \Gamma}$ analogously to \eqref{eq:fixed locus gysin}. As a result, we obtain the natural morphism
\begin{equation*}
    \omega_\Gamma:\what\cM_\Gamma\longrightarrow\overline\cM_{\wtil \Gamma}.
\end{equation*}
Then $\omega_\Gamma$ factors the composition $\what{\cM}_\Gamma \to \overline{\cM}_\Gamma \xrightarrow[]{\Phi_\Gamma} \overline{\cM}_{\what{\Gamma}}(X)$. 
Note that $\omega_\Gamma$ is an iterated gerbe over projective bundles with total relative virtual dimension $(r-1)\cdot U_1^0$. In particular, by Lemma~\ref{lem:fixed locus gysin}, $\omega_\Gamma$ is a virtually smooth morphism (see \cite[Definition 3.4]{Man12b} for the definition). Then, by the virtual pullback \cite[Theorem 1]{KKP03} or \cite{Man12a}, we have $[\what \cM_{\Gamma}]^{\vir}=(\omega_\Gamma)^! [\overline\cM_{\wtil \Gamma}]^{\vir}$.
It suffices to show
\begin{equation}
    \label{eq: contribution vanishes}(\omega_\Gamma)_*\left([t^{-1}]\frac{[\what\cM_\Gamma]^{\vir}}{e^{\C^*}(\mathfrak b^*N_\Gamma^{\vir})}\right)=0.
\end{equation}

\begin{lem}
\label{lem: contribution vanishes}
    If $Z$ is positive, equation \eqref{eq: contribution vanishes} holds.
\end{lem}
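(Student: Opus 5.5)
The plan is to push the $t^{-1}$-coefficient forward along $\omega_\Gamma$ and show that it lands in a degree where the target has no classes. The required negativity comes from Lemma~\ref{lemm:vdim1} together with the relative virtual dimension $(r-1)U_1^0$ of $\omega_\Gamma$.

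\emph{Step 1: split the inverse Euler class.} Write
\[
\frac{1}{e^{\C^*}(\mathfrak b^*N_\Gamma^{\vir})}=\sum_{j\ge 0} c_j\, t^{-a_\Gamma-j},
\qquad c_j\in A^{j}(\what\cM_\Gamma)_{\Q}.
\]
Since $a_\Gamma\le 1$, the $t^{-1}$-coefficient is $c_{1-a_\Gamma}\cap[\what\cM_\Gamma]^{\vir}$. This is a cycle of dimension
\[
\vd\overline\cM_\Gamma-(1-a_\Gamma)=\vd\overline\cM(W)-1=\vd\overline\cM_{0,k}(X,\beta).
\]

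\emph{Step 2: produce the dimension gap.} The target $\overline\cM_{\wtil\Gamma}$ has virtual dimension
\[
\vd\overline\cM_{\wtil\Gamma}=\vd\overline\cM_\Gamma-(r-1)U_1^0 .
\]
I then redo the count of Lemma~\ref{lemm:vdim1} for $\wtil\Gamma$. Removing the univalent $0$-vertices deletes their $-2U_1^0$ contributions, while each attaches as a marking on a $*$-vertex. After this, the only remaining slack comes from $U_1^*$ and $U_2$. The inequality to aim for is
\[
\vd\overline\cM_{0,k}(X,\beta)>\vd\overline\cM_{\wtil\Gamma}+(\text{rank of the remaining free directions}).
\]
One way to get it is to use the further factorization through $\Phi_\Gamma$ and $\rho_\Gamma$. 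The univalent $*$-vertex legs $a_u$ are forgotten by $\rho_\Gamma$, and forgetting a point costs exactly one dimension. So the comparison reduces to the inequality $a_\Gamma-1>(S-1)-U_1$ derived earlier, with $\delta_\Gamma>0$ coming from positivity.

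\emph{Step 3: conclude vanishing.} The key point is that $c_{1-a_\Gamma}$ should be expressible via classes pulled back from $\overline\cM_{\wtil\Gamma}$, together with powers of the relative hyperplane class $\xi$ on the projective-bundle factors of $\omega_\Gamma$. The edge moving parts $H^1(f_e^*N_{\what E/W})$ and the node-smoothing terms are built from $\psi$-classes at edge ends and from $\xi$. Given this, the projection formula reduces $(\omega_\Gamma)_*$ to pushforwards of $\xi^{j}$ along $\PP^{r-1}$-bundles, which vanish unless $j\ge (r-1)U_1^0$. But the total available degree is at most $1-a_\Gamma$, and it is bounded by the gap from Step~2. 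Hence every term either has too small $\xi$-degree (pushforward zero) or exceeds the dimension of $[\overline\cM_{\wtil\Gamma}]^{\vir}$ (zero cycle). When $U_1^0=0$, $\omega_\Gamma$ is the identity, and I instead push further along $\rho_\Gamma\circ\Phi_\Gamma$. There, $U_1^*\ge1$ free ramification legs are forgotten, and I show the integrand involves no $\psi$-class at these legs of high enough degree to survive the forgetful pushforward, again by the same degree count.

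The main obstacle is Step 3. It requires controlling the equivariant Euler class of the edge contributions precisely enough to see that the relevant $t^{-1}$-coefficient has insufficient fiber degree along $\omega_\Gamma$, or along the forgetful maps for univalent $*$-vertices. I would first try to write $e^{\C^*}(N_\Gamma^{\vir})$ explicitly edge by edge, as in \cite{CD23}, and match weights.
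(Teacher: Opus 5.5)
Your architecture matches the paper's. The $t^{-1}$-coefficient is $P\cap[\what\cM_\Gamma]^{\vir}$ with $P$ an operational class of degree $1-a_\Gamma$, and it must die under $\omega_\Gamma$, whose relative dimension is $(r-1)U_1^0$.

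The gap is in Step 2, which carries the whole content of the lemma. The inequality you reduce to, $a_\Gamma-1>(S-1)-U_1$, discards the term $r(m-1)$ of Lemma~\ref{lemm:vdim1}. It only yields $1-a_\Gamma<U_1^0+U_1^*-(S-1)$. That does not give the needed $1-a_\Gamma<(r-1)U_1^0$ once univalent $*$-leaves are present, for instance when $r=2$.

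Your patch for the $*$-leaves does not work, for two reasons. First, $\Phi_\Gamma$ is not virtually smooth and carries no obstruction theory compatible with $[\overline\cM_\Gamma]^{\vir}$, so no dimension count transfers through $\rho_\Gamma\circ\Phi_\Gamma$. Second, a $*$-leaf is not a free direction. By Lemma~\ref{lem:fixed locus gysin} it only forces the attaching point on the adjacent $0$-vertex to lie in $E$, which is a divisorial condition. Such a leaf therefore raises the virtual codimension rather than supplying a point whose forgetting frees a dimension. The correct target in Step 2 is simply $\vd\overline\cM_{0,k}(X,\beta)>\vd\overline\cM_{\wtil\Gamma}$, which is equivalent to $(r-1)U_1^0>1-a_\Gamma$. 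No further "free directions" are involved.

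What is missing is the exact identity from Lemma~\ref{lemm:vdim1}, rewritten using $h=S+U_1+U_2-1$:
\[
a_\Gamma-1+U_1^0=\delta_\Gamma+r(m-1)+(S-1)+(r-1)(\mathfrak e-h)-U_1^* .
\]
\emph{Case $m>U_1^*$.} Here $r(m-1)\ge rU_1^*$ absorbs $-U_1^*$, and $\delta_\Gamma>0$ by positivity.

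\emph{Case $m=U_1^*$.} Every $*$-vertex is a degree-zero leaf, so $\beta_*=0$ and $\delta_\Gamma=r$. The right side is then at least $(r-1)m+(S-1)$.

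In both cases you also need $S\ge1$. This holds because $\sum_v\mathfrak e_v=\mathfrak e>0$ forces a stable $0$-vertex. You then get $a_\Gamma-1+U_1^0>0$, i.e.\ $1-a_\Gamma<U_1^0\le(r-1)U_1^0$. In particular $U_1^0\ge1$ whenever $a_\Gamma\le1$, so your $U_1^0=0$ branch never occurs.

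The obstacle you flag in Step 3 is also not real: you need no explicit form of the Euler class. The map $\omega_\Gamma$ is smooth, being an iterated gerbe over projective bundles, and $[\what\cM_\Gamma]^{\vir}=\omega_\Gamma^![\overline\cM_{\wtil\Gamma}]^{\vir}$. Take any operational $P$ of degree below the relative dimension and any cycle $V$. Then $(\omega_\Gamma)_*(P\cap\omega_\Gamma^![V])$ is supported on $V$ in dimension $>\dim V$, hence vanishes. This is exactly the virtual pushforward vanishing the paper invokes.
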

\begin{proof}
    The proof uses only a comparison of virtual dimensions. Recall that we are assuming the virtual codimension $a_\Gamma$ of $\overline \cM_\Gamma$  satisfies $a_\Gamma\leq1$. Hence, the coefficient appearing in \eqref{eq: contribution vanishes} is the proper pushforward of $P_\Gamma\cap[\what\cM_\Gamma]^{\vir}$ along $\omega_\Gamma$ where $P_\Gamma$ is a degree-$(1-a_\Gamma)$ element of the operational Chow group. Now Lemma~\ref{lemm:vdim1} shows
    \begin{equation*}
        a_\Gamma-1+U_1^0=\delta_\Gamma+r(m-1)+(S-1)+(r-1)(\mathfrak e-h)-U_1^*.
    \end{equation*}
    If $m>U_1^*$, then the right-hand side is positive, since $\delta_\Gamma>0$. If $m=U_1^*$, then all $*$-vertices are degree-zero leaves, hence $\beta_*=0$ and $\delta_\Gamma=r$. In either case, we have $a_\Gamma-1+U_1^0>0$. Hence, 
    \begin{equation} \label{eq:codimcond1}
        1-a_\Gamma<U_1^0\leq (r-1)\cdot U_1^0=\rel.\vd \omega_\Gamma.
    \end{equation}
    Since $\omega_\Gamma$ is virtually smooth, the virtual push-forward property \cite[Definition~3.4, Lemma~3.6]{Man12b} and \eqref{eq:codimcond1} imply that $(\omega_\Gamma)_* \left( P_\Gamma \cap [\what\cM_{\Gamma}]^{\vir} \right)$ vanishes.
    Thus, we obtain the vanishing \eqref{eq: contribution vanishes}.
\end{proof}
This lemma completes the proof of our first main theorem.
\begin{thm}\label{thm:main1}
Assuming positivity of the blow-up center $Z$, we have
\begin{equation*}
\pi_*[\overline\cM_{0,k}(\wtil X,\pi^!\beta)]^{\vir}=[\overline\cM_{0,k}(X,\beta)]^{\vir}
\end{equation*}
in $A_*(\overline\cM_{0,k}(X,\beta))$. In particular, we have 
\begin{equation*}
\langle \pi^*\gamma_1,\cdots, \pi^*\gamma_k\rangle^{\wtil{X}}_{0,\pi^!\beta}=\langle \gamma_1,\cdots, \gamma_k\rangle^X_{0,\beta}
\end{equation*}
for $\gamma_i\in \rH^*(X;\Z)$.  
\end{thm}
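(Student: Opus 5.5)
The plan is to take the equivariant localization formula \eqref{eq:localization} for $[\overline\cM(W)]^{\vir}$, push it forward along $p:\overline\cM(W)\to\overline\cM_{0,k}(X,\beta)$, and extract the coefficient of $t^{-1}$. Since $p_*[\overline\cM(W)]^{\vir}$ is the pushforward of a class that is defined non-equivariantly, its equivariant lift lies in $A_*^{\C^*}(\overline\cM_{0,k}(X,\beta))\otimes\Q[t]$, where $\C^*$ acts trivially. So the $t^{-1}$-coefficient of the right-hand side of \eqref{eq:localization}, after applying $p_*$, must vanish. The Type~I term contributes $\tfrac1t[\overline\cM_{0,k}(X,\beta)]^{\vir}$, because $p$ restricts to the identity on $X_\infty$. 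The Type~II term contributes $-\tfrac1t\pi_*[\overline\cM_{0,k}(\wtil X,\pi^!\beta)]^{\vir}+O(t^{-2})$, because $p$ restricts to $\pi$ on $\wtil X$. Once every Type~III and Type~IV term is shown to have vanishing $t^{-1}$-coefficient after $p_*$, we get
\[
[\overline\cM_{0,k}(X,\beta)]^{\vir}-\pi_*[\overline\cM_{0,k}(\wtil X,\pi^!\beta)]^{\vir}=0 .
\]

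\textbf{Type~III loci.} By \eqref{eq:vdim3} and positivity, $\vcd\overline\cM_{0,k}(Z,\beta_Z)\ge 2$. A fixed-locus contribution $\iota_*[F]^{\vir}/e(N^{\vir})$ expands as $\sum_j t^{-a-j}c_j\cap[F]^{\vir}$ with $a=\vcd F$. The pushforward to a space with trivial action therefore only sees terms in $t^{-a-j}$, $j\ge0$, with cycles in the matching dimension. The $t^{-1}$-coefficient would need $j=1-a<0$, so it vanishes.

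\textbf{Type~IV loci with $\vcd\overline\cM_\Gamma\ge 2$.} These vanish by the same degree count, since $1/e(N^{\vir})=O(t^{-2})$ there.

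\textbf{Type~IV loci with $a_\Gamma\le1$.} These are the remaining cases. Use the factorization \eqref{eq: factorization} of $p|_{\overline\cM_\Gamma}$, further factored through the finite cover $\mathfrak b$ and the morphism $\omega_\Gamma$. Since $\mathfrak b_*$ only rescales by $\deg\mathfrak b$, it suffices that the $t^{-1}$-part vanishes after pushing forward along $\omega_\Gamma$. This is exactly Lemma~\ref{lem: contribution vanishes}. Then $\rho_\Gamma$ and $\gl_{\overline\Gamma}$ kill nothing further that needs checking, because the class is already zero.

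I also need to confirm that the automorphism factors and the passage from labelled to unlabelled graphs do not interfere. They only multiply each term by rational constants, so vanishing is preserved.

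The numerical statement follows by capping with $\prod\ev_i^*\gamma_i$ and using $\ev_i\circ\pi=\pi\circ\ev_i$ on $\overline\cM_{0,k}(\wtil X,\pi^!\beta)$, together with the projection formula. The main point needing care is the claim that the $t^{-1}$-coefficient is well defined and vanishes for the total pushforward. I would justify it by noting that $p$ is equivariant onto a space with trivial action, so $p_*[\overline\cM(W)]^{\vir}$ has no negative powers of $t$. Everything else is bookkeeping resting on Lemma~\ref{lem: contribution vanishes}.
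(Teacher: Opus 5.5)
Your proposal is correct and follows the paper's own proof essentially step for step. You push the master-space localization formula forward along $p$ and take the $t^{-1}$-coefficient, which vanishes because $p_*[\overline\cM(W)]^{\vir}$ is polynomial in $t$. You keep the Type~I and Type~II terms, discard Type~III loci and Type~IV loci of virtual codimension at least $2$ by the degree count, and kill the remaining Type~IV loci through $\mathfrak b$, $\omega_\Gamma$ and Lemma~\ref{lem: contribution vanishes}. The extra details you supply (why the residue vanishes, the automorphism factors, and the projection formula for the numerical statement) only make explicit what the paper leaves implicit.
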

\begin{proof}
    Recall the localization formula \eqref{eq:localization}. We have shown that Type~III graphs and Type~IV graphs with $\vcd\overline\cM_\Gamma>1$ do not contribute to the residue. Moreover, Lemma~\ref{lem: contribution vanishes} shows that Type~IV loci with $\vcd\overline\cM_\Gamma\leq1$ do not contribute. Hence, only the Type~I and Type~II loci contribute:
\begin{equation*}
     0=\res_{t=0} p_*[\overline\cM(W)]^{\vir}= [\overline\cM_{0,k}(X,\beta)]^{\vir}-\pi_*[\overline\cM_{0,k}(\wtil{X},\pi^!\beta)]^{\vir}
\end{equation*}
in $A_{*}(\overline\cM_{0,k}(X,\beta))$.
\end{proof}

Using this master space, we can construct a counterexample to the blow-up formula when $Z\subset X$ is not positive. 
\begin{exam}[Counterexample in the non-positive case]
\label{exam:counterex for non-positive}
 Let $Z=\mathbb P^1$ and $N=\mathcal O_Z(1)\oplus \mathcal O_Z(-3)$ be a rank-two bundle on $Z$. Let $X=\mathbb P_Z(N\oplus 1)$ with $Z=\mathbb P(0\oplus 1)\subset X$. Denote $
\wtil X=\Bl_ZX$ with the exceptional divisor being 
\begin{equation*}
    E=\mathbb P_Z(\mathcal O(1)\oplus\mathcal O(-3))\cong \mathbb F_4.
\end{equation*}

Let $\delta$ be the class of the unique section of self-intersection $-4$, and let $e$ be the fiber class. We now fix $\beta=[Z]$ to be the class of the zero section. The identity
\begin{equation*}
    K_{\wtil X}\cdot\delta=(\pi^*K_X+E)\cdot \delta=E\cdot \delta=1
\end{equation*}
shows $\pi^!\beta=\delta+e$. Recall the master space $W=\operatorname{Bl}_{Z\times0}(X\times \mathbb P^1)$ with its natural $\mathbb C^*$-action.
Note that the fixed line $Z\cong \mathbb P_Z(0\oplus 1)\subset \mathbb P_Z(N\oplus 1)$ has homology class $\pi^!\beta$ in $W$.
By \eqref{eq:vdim1}, \eqref{eq:vdim3}, and Lemma~\ref{lemm:vdim1}, Type~I, II, and III fixed loci have virtual codimension $1$, while there is a unique Type~IV fixed locus. It has a single $0$-vertex of class $\delta$ and a single $*$-vertex which is unstable. They are connected by a single edge of class $\ell$. Using Lemma~\ref{lemm:vdim1}, it has virtual dimension $-1$, so its virtual class vanishes and it makes no contribution. Considering the residue as in Theorem~\ref{thm:main1}, the Type~III locus $\overline{\cM}_{0,0}(Z,[Z])$ over $0\in \mathbb P^1$ has virtual dimension 0 and contributes $-1$ to the residue. Hence, we get
\begin{equation*}
    \deg[\overline{\cM}_{0,0}(\widetilde X,\pi^!\beta)]^{\mathrm{vir}}=\deg[\overline{\cM}_{0,0}(X,\beta)]^{\mathrm{vir}}-1,
\end{equation*}
providing a counterexample when $Z$ is not positive. Similar counterexamples were discovered earlier using different methods. See, for example, \cite[Example~4.16]{Lai09} and \cite[Theorem 1.1]{Ke20}.
\end{exam}

\begin{rmk}
\label{rmk:counterex for g=1}
The virtual cycle identity of Theorem~\ref{thm:main1} does not hold in general in positive genus. For example, consider the case where $X$ is a surface and $Z$ is a point, with $\beta=0$. For $g=1$, we have
\begin{equation*}
    [\overline\cM_{1,1}(\wtil X,0)]^{\vir}= c_2(\mathbb E_{\textrm{Hdg}}^\vee\boxtimes T_{\wtil X})\cap[\overline\cM_{1,1}\times \wtil X]
\end{equation*}
where $\mathbb E_{\textrm{Hdg}}$ is the Hodge bundle on $\overline \cM_{1,1}$. The same holds for $X$. Using $\pi_*(c_1(T_{\wtil X}))=c_1(T_X)$ and $\pi_*(c_2(T_{\wtil X}))=c_2(T_X)+[\pt]$, we obtain
\begin{equation}
\label{eq:counterex for g=1}
    \pi_*[\overline\cM_{1,1}(\wtil X,0)]^{\vir}= [\overline\cM_{1,1}(X,0)]^{\vir}+ [\overline \cM_{1,1}\times\pt].
\end{equation}
Section~\ref{sec:higher genus} introduces conditions for a higer-genuus comparison. For counterexamples with $\beta\neq 0$, see Remark~\ref{rmk:counterex for case ii}.
\end{rmk}
A modification of the remark above provides the following example. It serves as a counterexample to the numerical higher genus blow-up formula in the case of $Z\neq \pt$, $\dim X\leq 3$. See Section~\ref{sec:Introduction} for background.
\begin{rmk}
\label{rmk:counterex for case ii}
Let $S$ be a smooth projective surface and $C$ be a smooth projective genus-one curve. Set $X=S\times C$ and $Z=\pt\times C$, with $\beta=[Z]$. Let $\wtil S$ be the blow-up of $S$ at a point. For $g=1$, we have
\begin{equation*}
    \overline\cM_{1,0}(X,\beta)\cong S,\quad\text{and}\quad [\overline\cM_{1,0}(X,\beta)]^{\vir}=c_2(S)\cap [S] \quad \text{ in } A_0(S).
\end{equation*}
 The same holds for $\wtil X=\wtil {S}\times C$, and we get
\begin{equation*}
    \pi_*[\overline\cM_{1,0}(\wtil X,\pi^!\beta)]^{\vir}= [\overline\cM_{1,0}(X,\beta)]^{\vir}+[\pt]\quad 
\end{equation*}
in $A_0(S)$. This example demonstrates that the assumption that $Z$ is a point is essential for numerical blow-up formulas.
\end{rmk}

\section{Variants of the blow-up formula in genus zero}
Using variants of the master space technique developed in Section~\ref{sec: proof}, we generalize other blow-up formulas  studied in the literature. See Section~\ref{sec:Introduction} for connections with previous work.
\subsection{Blow-up vanishing formula}
In this section, we prove the blow-up vanishing formula. We now take $\beta' = \pi^! \beta + M\ell $ for $\beta \in \rH_2(X;\Z)$, where $M > 0$, and consider the induced $\C^*$-action on the master space $\overline\cM(W) := \overline\cM_{0,k}(W,\beta')$. Recall the description of Type~I to Type~IV loci in Section~\ref{sec: proof}. Note that Type~I and III loci do not appear in this case. Descriptions of Type~II and Type~IV are unchanged except that the total degree is $\pi^!\beta+M\ell$ instead of $\pi^!\beta$. A direct computation shows
\begin{align}\label{eq:vdim2}
\vd \overline\cM_{0,k}(\wtil{X},\pi^!\beta + M\ell)  = \vd \overline\cM(W) - 1 - M.
\end{align}

By an argument parallel to that of Lemma~\ref{lemm:vdim1}, we have the following.
\begin{lem}\label{lemm:vdim2}
The virtual codimension of Type~IV fixed loci is given by:
\begin{equation*}
    \vcd \overline\cM_\Gamma-1-M=\delta_\Gamma +r(m-1)+h+(r-1)(\mathfrak e-h)-2U_1-U_2.
\end{equation*}
In particular, assuming positivity of $Z$, so $\delta_\Gamma>0$, we obtain
\begin{equation*}
    \vcd \overline\cM_\Gamma-1-M> r(m-1)+h-2U_1-U_2.
\end{equation*}
\end{lem}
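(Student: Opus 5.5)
The plan is to repeat the dimension count of Lemma~\ref{lemm:vdim1} verbatim, with two changes: the total curve class is now $\pi^!\beta+M\ell$, and the reference dimension is \eqref{eq:vdim2} in place of \eqref{eq:vdim1}.

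The structural inputs carry over unchanged. Lemma~\ref{lem:fixed locus gysin} never used the total class, so the fixed locus $\overline\cM_\Gamma$ is still the fiber product \eqref{eq: moduli level iso} with the Gysin virtual class. Hence $\vd\overline\cM_\Gamma$ is the sum of three pieces, minus the matching conditions $(\dim\wtil X+\dim Z)\cdot h$:
\begin{itemize}
\item the vertex virtual dimensions $\vd\overline\cM_{0,\val(v)}(\wtil X,\beta_v)$ and $\vd\overline\cM_{0,\val(u)}(Z,\beta_u)$, with the unstable-vertex corrections $2U_1+U_2$;
\item the edge contributions $\dim E=d-1$ per edge;
\item the marking count $k$.
\end{itemize}

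Next I decorate the vertices. Each $v\in V_0$ carries $\beta_v=\pi^!\beta(v)-\mathfrak e_v\ell$ and each $u\in V_*$ carries $\beta_u\in\rH_2(Z;\Z)$. The only change from Lemma~\ref{lemm:vdim1} is the degree bookkeeping. Pushing forward to $X$ still gives $\iota_*\beta_*+\sum_v\beta(v)=\beta$. The $\ell$-coefficient now reads $\sum_v(-\mathfrak e_v)+\mathfrak e=M$: each edge of degree $d_e$ is a $d_e$-fold cover of a fiber line of $\what E$, and in $\rH_2(W;\Z)$ such a line has class $\ell$ plus a $\pi^!$-part that is already accounted for. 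So the relation $\sum_v\mathfrak e_v=\mathfrak e$ is replaced by $\sum_v\mathfrak e_v=\mathfrak e-M$.

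Substituting, using $c_1(\wtil X)\cdot\ell=r-1$, gives
\[\sum_v c_1(\wtil X)\cdot\beta_v=\sum_v c_1(X)\cdot\beta(v)-(r-1)(\mathfrak e-M).\]
Running the same algebra as in Lemma~\ref{lemm:vdim1} with $h=n+m-1$ then yields
\[\vd\overline\cM_\Gamma=\vd\overline\cM_{0,k}(X,\beta)-\delta_\Gamma-r(m-1)-h-(r-1)(\mathfrak e-h)+2U_1+U_2+(r-1)M.\]
To compare with $\vd\overline\cM(W)$, note that $\vd\overline\cM(W)=\vd\overline\cM_{0,k}(W,\pi^!\beta)+c_1(W)\cdot\ell$. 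Since $\ell$ is a line in a fiber $\PP^{r}$ of $\what E$, whose normal bundle restricts to $\cO(-1)$, we get $c_1(W)\cdot\ell=(r+1)-1=r$. Combining with \eqref{eq:vdim1},
\[\vd\overline\cM(W)=\vd\overline\cM_{0,k}(X,\beta)+1+rM.\]
Subtracting gives
\[\vcd\overline\cM_\Gamma=1+M+\delta_\Gamma+r(m-1)+h+(r-1)(\mathfrak e-h)-2U_1-U_2,\]
which is the claimed formula. The consistency check is \eqref{eq:vdim2}: there $\vd\overline\cM_{0,k}(\wtil X,\pi^!\beta+M\ell)=\vd\overline\cM_{0,k}(X,\beta)-(r-1)M$, which matches $\vd\overline\cM(W)-1-M$ with the value just computed. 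This $c_1(W)\cdot\ell$ bookkeeping is the one step I expect to need care, specifically the sign of $\mathfrak e_v$ and whether the edge lines carry class $\ell$ in $W$. I would pin it down against \eqref{eq:vdim2} rather than recompute independently.

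The inequality then follows immediately. By positivity of $Z$ (Definition~\ref{defn:positivity} together with Remark~\ref{rmk: alternative of positivity}), $\delta_\Gamma=c_1(N_{Z/X})\cdot\beta_*+r>0$. Dropping the nonnegative term $(r-1)(\mathfrak e-h)$, which is nonnegative because $\mathfrak e\ge h$, gives the strict inequality.
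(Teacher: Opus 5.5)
Your proposal is correct and is the parallel argument the paper intends. Replacing $\sum_v\mathfrak e_v=\mathfrak e$ by $\sum_v\mathfrak e_v=\mathfrak e-M$ adds $(r-1)M$ to $\vd\overline\cM_\Gamma$, while $c_1(W)\cdot\ell=r$ gives $\vd\overline\cM(W)=\vd\overline\cM_{0,k}(X,\beta)+1+rM$; the difference is exactly the extra $M$.

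Two slips need fixing in the write-up. First, the intermediate identity should read $\vd\overline\cM(W)=\vd\overline\cM_{0,k}(W,\pi^!\beta)+M\,c_1(W)\cdot\ell$. Second, since $c_1(\wtil X)\cdot\ell=r-1$, your consistency check should say $\vd\overline\cM_{0,k}(\wtil X,\pi^!\beta+M\ell)=\vd\overline\cM_{0,k}(X,\beta)+(r-1)M$. With the minus sign you wrote, it would not agree with \eqref{eq:vdim2}, so do not calibrate $c_1(W)\cdot\ell$ against that version.
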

Moreover, an analogue of Lemma~\ref{lem: contribution vanishes} for $\beta'=\pi^! \beta + M \ell$ holds. The auxiliary moduli $\overline\cM_{\wtil \Gamma}$, $\overline\cM_{\what \Gamma}$, $\overline\cM_{\overline \Gamma}$, and the factorization \eqref{eq: factorization} carry over unchanged to this case. We continue to write $$   \omega_\Gamma:\what\cM_{\Gamma}\longrightarrow\overline\cM_{\wtil \Gamma}$$ for the contraction of free unstable univalent $0$-vertices.

\begin{lem}
\label{lem: contribution vanishes 2}
    If $Z$ is positive, the following vanishing holds:
\begin{equation}
    (\omega_\Gamma)_*\left([t^{-1}]\left(t^M\cdot \frac{[\what\cM_\Gamma]^{\vir}}{e^{\C^*}(\mathfrak b^*N_\Gamma^{\vir})}\right)\right)=0.
\end{equation}
\end{lem}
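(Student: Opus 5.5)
We follow the proof of Lemma~\ref{lem: contribution vanishes} verbatim, with the index shift by $M$ coming from \eqref{eq:vdim2} and Lemma~\ref{lemm:vdim2}. The factor $t^M$ is inserted because the Type~II locus now has virtual codimension $1+M$ in $\overline\cM(W)$. Its contribution to the relevant coefficient therefore appears in degree $t^{-1-M}$, and the vanishing statement concerns the coefficient of $t^{-1}$ after multiplying by $t^M$. Write $a_\Gamma=\vcd\overline\cM_\Gamma$.

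The factor $1/e^{\C^*}(\mathfrak b^*N^{\vir}_\Gamma)$ has total degree $-a_\Gamma$. Hence the $t^{-1}$ coefficient of $t^M[\what\cM_\Gamma]^{\vir}/e^{\C^*}(\mathfrak b^*N^{\vir}_\Gamma)$ equals $P_\Gamma\cap[\what\cM_\Gamma]^{\vir}$, where $P_\Gamma$ is an operational class of degree $1+M-a_\Gamma$. If $a_\Gamma>1+M$, this coefficient is already zero, so we may assume $a_\Gamma\le 1+M$ and $P_\Gamma$ has nonnegative degree.

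The numerical step is the $M$-shifted version of the earlier computation. Substitute $h=n+m-1$ and $S=n+m-U_1-U_2$ into Lemma~\ref{lemm:vdim2} and split $U_1=U_1^0+U_1^*$. This gives
\begin{equation*}
a_\Gamma-1-M+U_1^0=\delta_\Gamma+r(m-1)+(S-1)+(r-1)(\mathfrak e-h)-U_1^*.
\end{equation*}
We then use the same case split as before. If $m>U_1^*$, then $r(m-1)-U_1^*\ge m-1-U_1^*\ge 0$. The remaining terms $\delta_\Gamma$, $S-1$ and $(r-1)(\mathfrak e-h)$ need a little care: $S\ge 1$ should hold, since otherwise the graph has no stable vertex. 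Together with $\delta_\Gamma>0$ from positivity, the right-hand side is therefore positive. If $m=U_1^*$, then every $*$-vertex is a degree-zero leaf, so $\beta_*=0$ and $\delta_\Gamma=r$. The right-hand side becomes $r+(r-1)U_1^*+(S-1)+(r-1)(\mathfrak e-h)-U_1^*$, which is positive because $r\ge 2$. In both cases
\begin{equation*}
1+M-a_\Gamma<U_1^0\le (r-1)U_1^0=\rel.\vd\,\omega_\Gamma .
\end{equation*}

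Finally, $\omega_\Gamma$ is virtually smooth by Lemma~\ref{lem:fixed locus gysin}, and its construction does not depend on the degree $\beta'$. Hence $[\what\cM_\Gamma]^{\vir}=\omega_\Gamma^![\overline\cM_{\wtil\Gamma}]^{\vir}$, and the virtual push-forward property of \cite[Lemma~3.6]{Man12b} applies. It implies that $(\omega_\Gamma)_*(P_\Gamma\cap[\what\cM_\Gamma]^{\vir})=0$ whenever $\deg P_\Gamma$ is smaller than the relative virtual dimension of $\omega_\Gamma$, which is exactly the inequality above.

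The main obstacle is the degree bookkeeping with the extra $M$. One must check that the $M$ appearing in $\vcd$ in Lemma~\ref{lemm:vdim2} is exactly cancelled by the weight shift $t^M$, so that $\deg P_\Gamma=1+M-a_\Gamma$ and not something larger. The relative virtual dimension $(r-1)U_1^0$ must also be verified to be unchanged for the new degree. This holds because the contracted univalent $0$-edges have the same local geometry, a projective-bundle fibre over $Z$, regardless of $\beta'$.
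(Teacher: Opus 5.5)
Your proposal is correct and follows the paper's route. The paper gives no separate argument for this lemma; it only notes that the proof of Lemma~\ref{lem: contribution vanishes} carries over with the shift by $M$ from \eqref{eq:vdim2} and Lemma~\ref{lemm:vdim2}, which is exactly your degree bookkeeping followed by the same virtual push-forward along $\omega_\Gamma$.

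Two small repairs are needed. First, in the case $m=U_1^*$ you mis-substituted $r(m-1)$: the right-hand side is $(r-1)U_1^*+(S-1)+(r-1)(\mathfrak e-h)$, which is still positive since $U_1^*=m\ge1$ and $S\ge1$. Second, $S\ge1$ needs an actual reason when $M>0$, because the degree no longer forces a stable $0$-vertex. If $\beta\neq0$, some vertex has nonzero degree. If $\beta=0$, nonemptiness of $\overline\cM_{0,k}(X,0)$ forces $k\ge3$, which is more legs than a path of unstable vertices can carry.
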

Virtual localization again gives us
\begin{equation}
\label{eq:localization2}
    [\overline\cM(W)]^{\vir}= \frac{\iota_*[\overline\cM_{0,k}(\wtil{X},\pi^!\beta+M\ell)]^{\vir}}{e^{\C^*}(N^{\vir} |_{\overline\cM_{0,k}(\wtil{X},\pi^!\beta+M\ell)} )}+\sum_{\Gamma \in\Omega}\frac{1}{|\Aut(\Gamma)|}\cdot\frac{\iota_*[\overline\cM_\Gamma]^{\vir}}{e^{\C^*}(N^{\vir} \vert_{\overline\cM_\Gamma})}
\end{equation}

Consider again $p_*[\overline\cM(W)]^{\vir}\in A_{*}(\overline\cM_{0,k}(X,\beta))$ where $p: \overline\cM(W)\to \overline\cM_{0,k}(X,\beta)$ is the morphism induced on moduli. Its $\C^*$-equivariant lift is obtained by applying the equivariant pushforward $p_*$ to \eqref{eq:localization2}. By \eqref{eq:vdim2} and Lemma~\ref{lem: contribution vanishes 2}, all contributions to $\res_{t=0} \left(t^M\cdot p_*[\overline\cM(W)]^{\vir}\right)$ except the Type~II contribution vanish. Hence, we get
\begin{equation*}
     0=\res_{t=0} \left(t^M\cdot p_*[\overline\cM(W)]^{\vir}\right)= (-1)^{M+1}\pi_*[\overline\cM_{0,k}(\wtil{X},\pi^!\beta+M\ell)]^{\vir}
\end{equation*}

This proves the following vanishing theorem.

\begin{thm}\label{thm:main2}
Assuming positivity of the blow-up center $Z$ and $M>0$, we have
\begin{equation*}
\pi_*[\overline\cM_{0,k}(\wtil X,\pi^!\beta+M\ell)]^{\vir}=0
\end{equation*}
in $A_*(\overline\cM_{0,k}(X,\beta))$. In particular, we have 
\begin{equation*}
\langle \pi^*\gamma_1,\cdots, \pi^*\gamma_k\rangle^{\wtil{X}}_{0,\pi^!\beta + M\ell}=0
\end{equation*}
for $\gamma_i\in \rH^*(X;\Z)$.  
\end{thm}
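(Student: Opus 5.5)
The argument runs through the master space $\overline\cM(W)=\overline\cM_{0,k}(W,\pi^!\beta+M\ell)$, in parallel with Theorem~\ref{thm:main1}, but the residue is taken after multiplying by $t^M$. Since $\overline\cM(W)$ is proper, the class $p_*[\overline\cM(W)]^{\vir}$ is a polynomial in $t$, so $t^M p_*[\overline\cM(W)]^{\vir}$ has zero residue at $t=0$. The plan is to expand this quantity with \eqref{eq:localization2} and show that only the Type~II term survives; that term must then vanish.

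\textbf{Fixed loci.} First I would check that Types~I and III are absent. A stable map lying over $\infty$, or into $Z_*$, has a class coming from $\rH_2(X)$ (respectively $\iota_*\rH_2(Z)$). Such a class carries no $\ell$-component, so it cannot equal $\pi^!\beta+M\ell$ with $M>0$.

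\textbf{Type~II.} The normal bundle is $N_{\wtil X/W}\cong \cO(-E)\otimes\mathbf t^{-1}$. Hence $e^{\C^*}(N^{\vir})^{-1}$ is a Laurent series in $t^{-1}$ whose leading term is $-t^{-1}$ times corrections of the form $(E\text{-class}/t)^j$. By \eqref{eq:vdim2} the fixed locus has virtual codimension $1+M$. Extracting $[t^{-1}]$ of $t^M\cdot(\cdots)$ therefore picks out the degree-$M$ part of the expansion of $e^{\C^*}(N^{\vir})^{-1}$ in $t^{-1}$, and this produces a class of the correct dimension. I would identify it with $(-1)^{M+1}$ times $[\overline\cM_{0,k}(\wtil X,\pi^!\beta+M\ell)]^{\vir}$ modified by classes built from $E$ and the $\psi$-class. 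One point needs care here: the residue must be taken in the dimension of $\vd\overline\cM(W)-1$, which exceeds the Type~II virtual dimension by $M$. The pushed-forward identity therefore compares $p_*$ of the Type~II term with zero in that dimension, and I would rearrange it so that the statement becomes the claimed vanishing. An alternative is to compare directly with the paper's displayed identity $0=(-1)^{M+1}\pi_*[\cdots]^{\vir}$, read in the appropriate degree.

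\textbf{Type~IV.} If $\vcd\overline\cM_\Gamma>1+M$, then $t^M/e^{\C^*}(N^{\vir})=O(t^{-2})$ and these loci contribute nothing. The remaining loci have $a_\Gamma\le 1+M$. For these I would run the argument of Lemma~\ref{lem: contribution vanishes} using Lemma~\ref{lemm:vdim2}. Pass to the labeled cover $\mathfrak b$ and factor $p$ through the free-leaf contraction $\omega_\Gamma$, which is virtually smooth of relative virtual dimension $(r-1)U_1^0$. The coefficient of $t^{-1}$ is then $P_\Gamma\cap[\what\cM_\Gamma]^{\vir}$ with $\deg P_\Gamma=1+M-a_\Gamma$. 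By Lemma~\ref{lemm:vdim2},
\[
a_\Gamma-1-M+U_1^0=\delta_\Gamma+r(m-1)+(S-1)+(r-1)(\mathfrak e-h)-U_1^*.
\]
This quantity is positive. Either $m>U_1^*$, in which case $\delta_\Gamma>0$ by positivity and we are done; or every $*$-vertex is a degree-zero leaf, in which case $\delta_\Gamma=r$. It follows that $1+M-a_\Gamma<U_1^0\le \rel.\vd\,\omega_\Gamma$. By Manolache's virtual pushforward property, $(\omega_\Gamma)_*(P_\Gamma\cap[\what\cM_\Gamma]^{\vir})=0$, which establishes Lemma~\ref{lem: contribution vanishes 2}. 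The pushforward to $\overline\cM_{0,k}(X,\beta)$ factors through $\omega_\Gamma$, so each of these terms vanishes.

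\textbf{Conclusion and main obstacle.} Combining the three steps gives $\pi_*[\overline\cM_{0,k}(\wtil X,\pi^!\beta+M\ell)]^{\vir}=0$. The numerical statement follows by the projection formula, using $\ev_i=\ev_i\circ p$ on the pulled-back insertions. I expect the main obstacle to be the careful bookkeeping in the Type~II residue: one must verify that the coefficient extracted there is exactly a nonzero multiple of $\pi_*$ of the virtual class, rather than a class twisted by powers of $E$. If that identification fails as stated, I would instead replace the twist $t^M$ by inserting a suitable equivariant class, for example one built from the pullback of $[\infty]$ or from $\ev^*$, in order to isolate the untwisted virtual class.
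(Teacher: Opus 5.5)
The overall strategy is the paper's:
\begin{itemize}
\item the master space $\overline\cM_{0,k}(W,\pi^!\beta+M\ell)$;
\item the absence of Types~I and III;
\item the vanishing of the residue of $t^M\cdot p_*[\overline\cM(W)]^{\vir}$;
\item the Type~IV vanishing via Lemma~\ref{lemm:vdim2} and virtual pushforward along $\omega_\Gamma$. Your inequality $1+M-a_\Gamma<U_1^0\le (r-1)U_1^0$ is exactly the analogue of Lemma~\ref{lem: contribution vanishes} that the paper relies on.
\end{itemize}

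The gap is in the Type~II step, which is the step that actually yields the theorem. Your computation there is wrong, and you leave the identification open. You take $e^{\C^*}(N^{\vir})^{-1}$ to start with $-t^{-1}$. That means you are using the rank-one bundle $N_{\wtil X/W}$, as in Theorem~\ref{thm:main1} where $\pi^!\beta\cdot E=0$. The correct object is the virtual normal bundle $N^{\vir}=R\rho_*f^*\cO(-E)\otimes\mathbf t^{-1}$. Since $\deg f^*\cO(-E)=-E\cdot(\pi^!\beta+M\ell)=M$ on a genus-zero source, $N^{\vir}$ has rank $M+1$. This matches the virtual codimension $1+M$ from \eqref{eq:vdim2}, which you quote yourself. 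It also has pure weight $-1$, because $\C^*$ acts trivially on this fixed locus. Therefore
\begin{equation*}
\frac{t^M}{e^{\C^*}(N^{\vir})}=t^M\cdot\frac{(-1)^{M+1}}{t^{M+1}}\bigl(1+O(t^{-1})\bigr)=\frac{(-1)^{M+1}}{t}+O(t^{-2}).
\end{equation*}
So the $[t^{-1}]$-coefficient is exactly $(-1)^{M+1}[\overline\cM_{0,k}(\wtil X,\pi^!\beta+M\ell)]^{\vir}$, with no twisting by $E$-classes. No $\psi$-classes can occur either, since this locus has no node-smoothing directions.

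Your dimension bookkeeping is also incorrect. The class $t^M\cdot p_*[\overline\cM(W)]^{\vir}$ is homogeneous of dimension $\vd\overline\cM(W)-M$. Its $t^{-1}$-coefficient therefore lies in $A_{\vd\overline\cM(W)-1-M}(\overline\cM_{0,k}(X,\beta))$, not in dimension $\vd\overline\cM(W)-1$. This is precisely the virtual dimension of the Type~II locus. Once the rank is computed correctly, there is nothing to rearrange and no need for your fallback with other equivariant insertions. The residue identity is then $0=(-1)^{M+1}\pi_*[\overline\cM_{0,k}(\wtil X,\pi^!\beta+M\ell)]^{\vir}$, which is the paper's proof. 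As written, however, your proposal does not derive this identity: it defers to the paper's display and even predicts a twisted class. So the decisive step is missing.
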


\begin{rmk}
Recently, Chen, Du, and Fan \cite{CDF26} proved Theorem~\ref{thm:main2} using orbifold Gromov--Witten theory and root stacks. In their approach, the balancing condition along the root divisor excludes univalent unstable $0$-vertices. By contrast, our approach remains within ordinary Gromov--Witten theory and handles loci of insufficient virtual codimension through the geometric contraction of free leaves and a pushforward-vanishing argument.
\end{rmk}

The following example gives a counterexample to Theorem~\ref{thm:main2} without positivity, by a construction analogous to Example~\ref{exam:counterex for non-positive}.
\begin{exam}[Counterexample to the blow-up vanishing formula]
\label{ex:counterex-vanishing}
Let $Z=\mathbb P^1$ and let $N=\mathcal O_Z(-1)\oplus\mathcal O_Z(-2)$. Consider $X=\mathbb P_Z(N\oplus 1)$ with $   Z=\mathbb P_Z(0\oplus 1)\subset X$
and set $\widetilde X=\Bl_ZX$. Note that $Z\subset X$ is not positive. The exceptional divisor is
\begin{equation*}
    E=\mathbb P_Z(N)\cong\mathbb F_1.
\end{equation*}
Let $\delta$ be the class of the unique section of self-intersection $-1$, and let $e$ be the fiber class
of $E\to Z$. We fix $\beta=[Z]$, the class of the zero section. As in Example~\ref{exam:counterex for non-positive}, one can show that
\begin{equation*}
    \pi^!\beta+e=\delta.
\end{equation*}
 Recall the master space  $W=\Bl_{Z\times\{0\}}(X\times\mathbb P^1)$ with its natural $\mathbb C^*$-action, and consider
$\overline{\mathcal M}_{0,0}(W,\delta)$, where
$\delta=\pi^!\beta+e$. There are two contributing fixed loci: first is the Type~II locus $\overline{\mathcal M}_{0,0}(\widetilde X,\delta)$. Its contribution to the residue
\begin{equation*}
    \res_{t=0}\left(
    t\cdot p_*[\overline{\mathcal M}_{0,0}(W,\delta)]^{\vir}\right) 
\end{equation*}
is $\pi_*[\overline{\mathcal M}_{0,0}(\widetilde X,\delta)]^{\vir}$. The other is the unique Type~IV locus. Its graph has a single
$\ast$-vertex of degree $\beta=[Z_\ast]$ and a single degree-one
invariant edge ending at an unstable univalent $0$-vertex. Denote this
graph by $\Gamma_0$. Its fixed locus is
\begin{equation*}
    \overline{\mathcal M}_{\Gamma_0}
    \cong
    \overline{\mathcal M}_{0,1}(Z,\beta)\times_Z E
    \cong E.
\end{equation*}
By Lemma~\ref{lem:fixed locus gysin}, $[\overline{\mathcal M}_{\Gamma_0}]^{\vir}=[E]$. Let $\phi:E\to Z$ be the projection. Denote
\begin{equation*}
    H=\phi^*\left(c_1(\mathcal O_Z(1))\right)\quad\text{and}\quad
    \xi=c_1(\cO_E(1)).
\end{equation*}
Normalizing the source at its unique node and cancelling the moving
edge deformations against the matching conditions gives
\begin{equation*}
    [N_{\Gamma_0}^{\vir}]
    =
    [(\phi^*T_Z\otimes \cO_E(-1))\otimes\mathbf t]
    -[H^1(Z,\mathcal O_Z(-2))\otimes\mathbf t]
\end{equation*}
in equivariant $K$-theory. Hence, 
\begin{equation*}
    \frac{1}{e_{\mathbb C^*}(N_{\Gamma_0}^{\vir})}
    =\frac{t}{t+2H-\xi}.
\end{equation*}
Since $\int_E(2H-\xi)^2=-1$, after pushforward to
$\overline{\mathcal M}_{0,0}(X,\beta)\cong\mathrm{pt}$, the Type~IV
contribution is
\begin{equation*}
    \res_{t=0}\left( t\cdot (p_{\Gamma_0})_*
    \left(
        \frac{[\overline{\mathcal M}_{\Gamma_0}]^{\vir}}
        {e_{\mathbb C^*}(N_{\Gamma_0}^{\vir})}
    \right)\right) = \left(\int_E(2H-\xi)^2\right)[\mathrm{pt}] 
    =-[\mathrm{pt}].
\end{equation*}
Hence, taking the residue as in the proof of
Theorem~\ref{thm:main2}, we obtain
\begin{equation*}
    0=\res_{t=0}\left(t\cdot p_*[
        \overline{\mathcal M}_{0,0}(W,\delta)
    ]^{\vir}\right)=\pi_*[\overline{\mathcal M}_{0,0}(\widetilde X,\delta)]^{\vir}-[\mathrm{pt}].
\end{equation*}
Thus $\pi_*[\overline{\mathcal M}_{0,0}(\widetilde X,\pi^!\beta+e)
    ]^{\vir}=[\mathrm{pt}]\neq 0$. This provides a projective counterexample when the positivity assumption is omitted.
\end{exam}

\subsection{Relative blow-up formula}\label{sec: rel blowup formula}
Using the same idea, we prove a relative version of the blow-up formulas, again under positivity assumptions on the normal bundle. We recall the notation for expanded targets and relative stable maps in \cite{Li01} before stating the relative blow-up formula. Every connected component of the source has genus zero in this section.

Let $W\to B$ be a flat family over a smooth curve $B$ with a chosen local parameter $B\to \AA^1$ where $W_t\coloneqq W\times_B\{t\}$ is smooth for $t\in B\setminus\{0\}$. Assume that $W$ is a simple degeneration and $W_0$ splits into two smooth irreducible components $W_0=Y_1\cup_D Y_2$, meeting along a divisor $D$. This means that $(Y_1,D)$ and $(Y_2,D)$ are relative pairs where $D$ is a smooth divisor in $Y_i$. Let $\mathbf d=(g,k,b)$ be the genus, number of markings, and degree data. Then, we denote by $\fM(\fW,\mathbf d)$ the moduli of stable maps to the stack of expanded degenerations $\fW$ over $(\Sch/B)$. The stack $\fW$ parametrizes all expanded degenerations $W[l]\to B[l]\coloneqq B\times_{\AA^1}{\AA^{l+1}}$ for $l\in \Z_{\geq 0}$ whose special fiber $W[l]_0$ can be described as
\begin{equation*}
    W[l]_0= Y_1 \cup_D P\cup_D\cdots \cup_DP\cup_D Y_2\quad (l\text{ copies of } P)
\end{equation*}
where $P\coloneqq \PP_D(N_{D/Y_1}\oplus1)$. The infinity section of the $i$th copy of $P$ is glued to the zero section of the $(i+1)$st copy. Moreover, let $H_i\subset \AA^{l+1}$ be the $i$-th coordinate hyperplane and $B[l]_i\coloneqq B\times_{\AA^1}H_i$ so that $B[l]_0=\bigcup_{i=1}^{l+1} B[l]_i$. Then, $W[l]\times_{B[l]}B[l]_i$ is a union of two irreducible components $\Delta_+$ and $\Delta_-$ glued along a nodal divisor $\bD_i$. The moduli $\overline\cM(\fW,\mathbf d)$ parametrizes families of pre-deformable stable morphisms to $\fW$. Pre-deformability ensures that for each node $p\in C$ lying in the preimage of $\bD_i$, the two branches of $C$ at $p$ map to different irreducible components $\Delta_\pm$, with identical contact orders. We also let $\overline\cM(\fW_0,\mathbf d)=\overline\cM(\fW,\mathbf d)\times_B0$ be the moduli of stable maps to the singular scheme $W_0$.

Similarly, \cite{Li01} defines the moduli $\overline\cM_\Psi(Y,D)$ of relative stable maps to a relative pair $(Y,D)$. We do not assume the source curve to be connected. In the relative setting, $\Psi$ consists of the number $k$ of absolute markings, the degree $\beta$, and $q$ relative markings yielding a partition of length $q$ of $\beta\cdot D$. One can encode these data in a graph $\Psi$ with $\Ver (\Psi)$ indexing the connected components of the source curve. The vertices have legs corresponding to absolute markings and roots (with integer-valued weight function $\mu$ on roots) corresponding to relative markings. We also have a genus function $g:\Ver(\Psi)\to \Z_{\geq 0}$, and a weight function $b:\Ver(\Psi)\to \rH_2(Y)$ corresponding to the degree. The graph $\Psi$ is said to be \textit{admissible} if $\Ver(\Psi)$ consists of a single vertex, or every vertex carries at least one root.

We also allow the target to expand and consider all (relative) expanded degenerations $(Y[l], D_\infty)\to \AA^l$ whose special fiber $Y[l]_0$ can be described as
\begin{equation*}
    Y[l]_0= Y \cup_D P\cup_D\cdots \cup_DP \quad (l\text{ copies of } P)
\end{equation*}
so that the singular locus is $\Sing Y[l]_0=\bigcup_{i=0}^{l-1}D_i$. We will also use the notation 
\begin{equation*}
    D[l]= P\cup_D\cdots \cup_DP \quad (l\text{ copies of } P).
\end{equation*}
Under this description, $D_\infty\vert_0$ is the infinity divisor of the last $P$ component. Moreover, $Y[l]\times_{\AA^l}H_i$ is a union of two irreducible components glued along a nodal divisor $\bB_i$. Again, pre-deformability means that for each node $p\in C$ lying in the preimage of $\bB_i$, the two branches of $C$ at $p$ map to different irreducible components with identical contact orders. The moduli $\overline\cM_\Psi(Y,D)$ parametrizes families of pre-deformable stable morphisms to all expanded degenerations $Y[l]$.
We now recall the description of the relative deformation--obstruction theory of $\overline\cM_\Psi(Y,D)$ relative to curve moduli $\mathcal T\times \mathfrak M_{\Psi}$ from \cite{GV05}. Here $\cT$ is an open substack of $\mathfrak M_{0,3}$ where nodes separate the $\infty$-marking from markings $0$ and $1$, and $\mathfrak M_{\Psi}$ is the corresponding product of prestable curve stacks. Let $f^\dagger T_{Y[l]}(-\log D_\infty)$ be the quotient of $f^*T_{Y[l]}(-\log D_\infty)$ by its torsion subsheaf. The relative deformation space of $f$ is given by 
\begin{equation*}
    \mathrm{RelDef}(f)=H^0(C,f^\dagger T_{Y[l]}(-\log D_\infty)).
\end{equation*}
The obstruction space of $f$ satisfies the following short exact sequence:
\begin{equation}
\label{eq:Relobstruction}
0\to H^1(C,f^\dagger T_{Y[l]}(-\log D_\infty))\to    \mathrm{RelOb}(f)\to \oplus_{i=0}^{l-1} L_i^{\oplus j_i} \to 0
\end{equation}
where $L_i$ is the target-node smoothing line of $D_i$ in $Y[l]$ and $j_i$ is the number of nodes of $C$ mapping to $D_i$. The $L_i^{\oplus j_i}$ summands in this exact sequence correspond to the pre-deformability obstructions to deforming $f$. Geometrically, sections of $f^\dagger T_Y[l](-\log D_\infty)$ are vector fields of $T_Y[l]$ along $C$ whose values at nodes lie in the corresponding tangent spaces of $D_i$. The following sequence can be verified by a local computation:
\begin{equation}
\label{eq:Dagger SES}
    0\to f^\dagger T_{W_0}\to f_1^*T_{Y_1}(-\operatorname{log} D)\oplus f_2^*T_{Y_2}(-\operatorname{log} D)\to \bigoplus_{x\mapsto D}\ev_x^*T_D\to 0 
\end{equation}
where $C=C_1\cup C_2$ and $f_i=f\vert_{C_i}$. The maps are induced by natural restrictions.\\

Our goal in this section is to prove a relative version of the blow-up conjecture, assuming positivity of $Z$. Let $(X,D)$ be a relative pair such that $D$ does not intersect $Z$. Following the notation of \cite{Lai09}, define $b(\Psi)=\sum_{w\in\Ver(\Psi)}b(w)$ to be the total degree for an admissible graph $\Psi$. Fix an admissible graph $\Psi$ associated to $(X,D)$. Then, there is a canonical admissible graph $\pi^!(\Psi)$ associated to $(\wtil X,D)$ characterized by:
\begin{itemize}
    \item [(1)] All data of $\pi^!(\Psi)$ agree with those of $\Psi$, except for the weight function.
    \item [(2)] The weight function $b$ is determined by the following commutative diagram.
    \begin{equation*}
        \begin{tikzcd}
	{\Ver(\Psi)} & {\rH_2(X)} \\
	{\Ver(\pi^!(\Psi))} & {\rH_2(\wtil X)}
	\arrow["b_\Psi", from=1-1, to=1-2]
	\arrow[from=1-1, to=2-1]
	\arrow["{\pi^!}", from=1-2, to=2-2]
	\arrow["b_{\pi^!(\Psi)}"', from=2-1, to=2-2]
\end{tikzcd}
    \end{equation*}
\end{itemize}
Finally, we fix an admissible graph $\wtil\Psi$ associated to $(\wtil X,D)$ whose data are identical to those of $\Psi$ except for the weight function. Assume it satisfies
\begin{equation*}
    b(\wtil \Psi)=\pi^!b( \Psi).
\end{equation*}

Recall the construction $W= \Bl_{Z \times \set{0}} (X \times \PP^1)$ with a 
distinguished divisor $D\times \PP^1\subset W$. Let $\Psi'$ be the admissible graph of $(W,D\times\PP^1)$ whose data are identical to those of $\wtil \Psi$, except for the weight function characterized by the commutative diagram
\begin{equation*}
    \begin{tikzcd}
	{\Ver(\wtil \Psi)} & {\rH_2(\wtil X)} \\
	{\Ver(\Psi')} & {\rH_2(W)}
	\arrow["b_{\wtil \Psi}", from=1-1, to=1-2]
	\arrow[from=1-1, to=2-1]
	\arrow["{{\iota_*}}", from=1-2, to=2-2]
	\arrow["b_{\Psi'}"', from=2-1, to=2-2]
\end{tikzcd}
\end{equation*}
where $\iota:\wtil X\into W$ is the inclusion of $\wtil X$ as an irreducible component of $W\times_{\PP^1}\{0\}$.
This time, we use the master space
\begin{equation*}
\overline\cM(W) := \overline\cM_{\Psi'}(W,D\times\PP^1).
\end{equation*}
There is a natural $\C^*$-action on $\overline\cM(W)$ induced by the action on $\PP^1$. The fixed loci are analogous to Types~I to IV in Section~\ref{sec: proof}, with the following modifications:
\begin{itemize}
\item[I.]
Replace $\overline\cM_{0,k}(X, \beta )$ with $\overline\cM_\Psi(X, D)$ if $\wtil \Psi=\pi^!(\Psi)$. If $\wtil \Psi\neq\pi^!(\Psi)$, the Type~I locus is empty as in the proof of Theorem~\ref{thm:main2}.

\item[II.]
Replace $\overline\cM_{0,k}(\wtil{X},\pi^! \beta)$ with $\overline\cM_{\wtil\Psi}(\wtil{X},D)$.

\item[III.]
If the weight function $\mu$ on roots is nonzero, the Type~III locus is empty.

\item[IV.] The description for a graph $\Gamma$ is identical, but the moduli factors at $0$-vertices are replaced by moduli spaces of relative stable maps to $(\wtil X,D)$. Denote the moduli by $\overline\cM_\Gamma^{\rel}$, omitting $\wtil \Psi$ from the notation.
\end{itemize}
Using the same strategy as in Theorem~\ref{thm:main1}, we have the following genus zero relative blow-up formula. Assume that the target moduli space $\overline\cM_{\Psi}(X,D)$ is nonempty. Let $$p: \overline\cM(W)\to \overline\cM_{\Psi}(X,D)$$
be the morphism on relative moduli induced from $(W,D\times\PP^1)\to(X,D)$.
\begin{thm}
\label{thm: relative blow-up formula}
    Let $(X,D)$ be a relative pair such that $D\cap Z=\emptyset$ ($D$ is away from the blow-up locus) and $Z$ is positive. Let $\widetilde \Psi$ and $\Psi$ be admissible graphs of $(\widetilde X,D)$ and $(X,D)$, respectively, identical except for the weight function $b$. Assume $b(\widetilde \Psi)=\pi^!b(\Psi)$ so that $\vd \overline\cM_{\widetilde \Psi}(\widetilde X,D)=\vd \overline\cM_{\Psi}(X,D)$. We also assume $\pi_*\circ b_{\wtil\Psi}=b_\Psi$ so that 
    \begin{equation*}
        \pi:\overline\cM_{\widetilde \Psi}(\widetilde X,D)\longrightarrow\overline\cM_{\Psi}(X,D)
    \end{equation*}
    is well-defined. Then, we have the following identity of virtual cycles for genus zero relative stable maps. 
    \begin{align*}
        \pi_*[\overline\cM_{\widetilde \Psi}(\widetilde X,D)]^{\vir}=
        \begin{cases}
            [\overline\cM_{\Psi}(X,D)]^{\vir} &\text{if} \quad \widetilde \Psi=\pi^!(\Psi)\\\\
            0& \text{if}\quad \widetilde \Psi\neq\pi^!(\Psi)
        \end{cases}
    \end{align*}
    in $A_{*}(\cM_{\Psi}(X,D))$. Consequently, the relative invariants satisfy
    \begin{align*}
        \langle \pi^*\gamma_1,\pi^*\gamma_2,\cdots, \pi^*\gamma_k\rangle_{\widetilde \Psi}^{\widetilde X,D}=
        \begin{cases}
            \langle\gamma_1,\gamma_2,\cdots,\gamma_k\rangle_{\Psi}^{X,D} &\text{if} \quad \widetilde \Psi=\pi^!(\Psi)\\\\
            0& \text{if}\quad \widetilde \Psi\neq\pi^!(\Psi)
        \end{cases}
    \end{align*}
    in $\rH_*(D^q)$ where $q$ is the number of roots.
\end{thm}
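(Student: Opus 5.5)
The plan is to run the master-space argument of Theorems~\ref{thm:main1} and \ref{thm:main2} verbatim for the relative master space $\overline\cM(W)=\overline\cM_{\Psi'}(W,D\times\PP^1)$. Write $b(\wtil\Psi)=\pi^!b(\Psi)$ vertexwise as $b_{\wtil\Psi}(w)=\pi^!b_\Psi(w)+M_w\ell$ with $\sum_w M_w=0$. Since $\pi_*\circ b_{\wtil\Psi}=b_\Psi$, the case $\wtil\Psi\neq\pi^!(\Psi)$ is exactly the case where some $M_w>0$. In that case, set $M=\sum_w\max(M_w,0)$.

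The first step is to set up the $\C^*$-action. Because $D\cap Z=\emptyset$, the divisor $D\times\PP^1$ is disjoint from the blow-up center. Hence $\C^*$ acts on $\overline\cM(W)$ compatibly with expansions along $D\times\PP^1$, and near $D\times\PP^1$ the geometry is a product with $\PP^1$. The expanded bubbles $P=\PP_D(N_D\oplus 1)$ only ever sit over $D$, away from $Z_*$, $E$ and the edges. The relative obstruction theory recalled from \cite{GV05}, including the exact sequences \eqref{eq:Relobstruction} and \eqref{eq:Dagger SES}, splits into fixed and moving parts exactly as in the absolute case. The pre-deformability summands $L_i$ and the $T_D$-terms are $\C^*$-fixed and belong entirely to the relative vertex factors.

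This gives the fixed loci I--IV listed above. For Type~IV, I will prove the analogue of Lemma~\ref{lem:fixed locus gysin}: $[\overline\cM_\Gamma^{\rel}]^{\vir}$ is the refined Gysin pullback $\Delta_\Gamma^!$ of the product of relative vertex classes at $0$-vertices, absolute classes at $*$-vertices, and edge classes. The proof is the normalization-triangle argument of Lemma~\ref{lem:fixed locus gysin}, now applied to $f^\dagger T_{W}(-\log D\times\PP^1)$. Since edges and $*$-vertices are disjoint from $D$, the log structure is invisible at the gluing nodes.

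The second step is the dimension count. Relative virtual dimensions are shifted by the same contact data on both sides, so \eqref{eq:vdim1}, \eqref{eq:vdim2} and Lemmas~\ref{lemm:vdim1}, \ref{lemm:vdim2} hold with the same formulas. Type~III is empty when $\mu\neq 0$; when $\mu=0$ it is excluded by positivity as before. For Type~IV loci with small codimension, I will reuse the free-leaf contraction. The relevant univalent unstable $0$-vertices carry no roots, since an unstable vertex has degree zero and its image lies in $E$, away from $D$. Therefore the contraction $\omega_\Gamma:\what\cM_\Gamma\to\overline\cM_{\wtil\Gamma}$ is again an iterated gerbe over projective bundles with relative virtual dimension $(r-1)U_1^0$, and it is virtually smooth. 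The inequality \eqref{eq:codimcond1} together with Manolache's virtual push-forward gives the vanishing of Lemma~\ref{lem: contribution vanishes}, and of Lemma~\ref{lem: contribution vanishes 2} after multiplying by $t^M$.

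The last step is to take residues of $p_*[\overline\cM(W)]^{\vir}$ in $A_*(\overline\cM_\Psi(X,D))$. Here $p$ is well-defined because the map $W\to X$ is an isomorphism near $D$. If $\wtil\Psi=\pi^!(\Psi)$, the Type~I and Type~II terms have normal Euler classes $1/t$ and $-1/t+O(t^{-2})$, which gives $[\overline\cM_\Psi(X,D)]^{\vir}=\pi_*[\overline\cM_{\wtil\Psi}(\wtil X,D)]^{\vir}$. Otherwise Type~I is empty, and the residue of $t^M p_*[\cdot]$ isolates $\pm\pi_*[\overline\cM_{\wtil\Psi}(\wtil X,D)]^{\vir}=0$.

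The disconnected source requires care. I expect the main obstacle to be the virtual codimension of Type~II when several components carry $M_w\neq 0$: one must check that it equals $1+M$ with $M$ as defined, and that Type~IV loci mixing components still satisfy the codimension inequalities. I would handle this by working componentwise, using that the virtual class of a disconnected relative moduli space is a product over vertices of $\Psi$ glued through the common expansion. The numerical statement then follows by capping with $\prod\ev_i^*\pi^*\gamma_i$ and pushing forward along the relative evaluation to $D^q$, which is unchanged by $\pi$.
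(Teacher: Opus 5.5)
Your outline matches the paper's argument when the source is connected, i.e.\ when $\Psi$ has a single vertex, and the paper handles that case in one sentence. Note that the case $\wtil\Psi\neq\pi^!(\Psi)$ never occurs there: for one vertex, $b(\wtil\Psi)=\pi^!b(\Psi)$ forces $M_w=0$.

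\textbf{Where the argument breaks.} The content of the theorem is a disconnected source with vertex set $I$, $|I|\ge 2$, and there your claim that \eqref{eq:vdim1} and \eqref{eq:vdim2} hold ``with the same formulas'' is false. Each connected component gains one dimension from the $\PP^1$-direction, so $\vd\overline\cM(W)=\vd\overline\cM_\Psi(X,D)+|I|$.
\begin{itemize}
\item In case (1), Type~I and Type~II have virtual codimension $|I|$. There are also $2^{|I|}-2$ mixed fixed loci $F_{I'}$ of the same codimension, in which the components in $I'$ map to $\wtil X$ over $0$ and the others to $X_\infty$. Your inverse Euler classes $1/t$ and $-1/t+O(t^{-2})$ are therefore wrong. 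The $t^{-1}$-coefficient sees none of these loci. The $t^{-|I|}$-coefficient gives only one relation, $\sum_{I'\subseteq I}(-1)^{|I'|}p_*[F_{I'}]^{\vir}=0$ modulo Type~IV. That relation cannot separate $p_*[F_I]^{\vir}=\pi_*[\overline\cM_{\wtil\Psi}(\wtil X,D)]^{\vir}$ from $p_*[F_\emptyset]^{\vir}=[\overline\cM_\Psi(X,D)]^{\vir}$.
\item In case (2), the Type~II codimension is $\sum_w(1+M_w)=|I|$, not $1+M$ in general. Components with $M_w<0$ even have codimension $1+M_w\le 0$. For example, take $(M_w)=(1,-1,0)$. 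The Type~II term of $t^{M}p_*[\overline\cM(W)]^{\vir}$ starts at $t^{1-3}$, so its $t^{-1}$-coefficient is zero. In $p_*[\overline\cM(W)]^{\vir}$ itself, the all-Type~II locus enters at order $t^{-3}$ together with the mixed locus in which the $M_w=0$ component sits over $\infty$.
\end{itemize}

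\textbf{What is missing.} You need a device that pins individual components to one fixed fiber, plus componentwise bookkeeping of powers of $t$.
\begin{itemize}
\item The paper uses $\Theta_{w,x}=e^{\C^*}\bigl(R\rho_{w*}f_w^*(q^{\exp})^*\cO(x)\bigr)$ for $x\in\{0,\infty\}$. It vanishes on fixed loci where component $w$ lies over the other fixed point, and is a nonzero multiple of $t$ otherwise.
\item In case (1), cap with $\prod_{i\in I'}\Theta_{i,0}\prod_{i\notin I'\cup\{w\}}\Theta_{i,\infty}$. This leaves only component $w$ free, and the residue gives $p_*[F_{I'}]^{\vir}=p_*[F_{I'\cup\{w\}}]^{\vir}$. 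Iterating from $I'=\emptyset$ to $I'=I$ proves the identity.
\item In case (2), cap with $\prod_{i\in I_0}\Theta_{i,0}$, where $I_0=\{w:M_w=0\}$, and take the $t^{-|I|+|I_0|}$-coefficient. Expand the inverse Euler class of each component's moving part $\mathsf K_{\Gamma,w}$ as $t^{-a_w}\sum_\nu P_{\Gamma,w,\nu}t^{-\nu}$. Using $\sum_wM_w=0$, this coefficient forces $\sum_w\nu_w=\sum_w(1+M_w-a_w)$. Type~II components have $\nu_w\ge 1+M_w-a_w$, while Type~IV components contribute only if $\nu_w>1+M_w-a_w$, so only the all-Type~II locus survives.
\item This requires the free-leaf contraction $\omega_{\Gamma,w}$ to be done one component at a time, with the other components' $\mathsf K_{\Gamma,v}$ pulled back from its target. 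The vanishing must hold for either sign of $M_w$. A single global contraction with one $M$ does not achieve this.
\end{itemize}

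Your fallback, a product decomposition of the disconnected relative moduli space over connected components, does not give the statement. The common expansion couples the components, so at best it yields an identity after pushforward to a product of connected moduli spaces, not in $A_*(\overline\cM_\Psi(X,D))$. The paper's componentwise argument deliberately avoids such a decomposition.
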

\begin{proof}
The obstruction theory of $\overline\cM_\Psi(X,D)$ constructed in \cite{Li02} has the following form. Its global map-deformation term
is obtained by replacing $f^*T_X$ with the sheaf
$f^\dagger T_{X[l]}(-\log D_\infty)$; the full relative theory also
contains the pre-deformability terms in~\eqref{eq:Relobstruction} and the
deformation--automorphism data of the expanded target. Let $I$ be the
common vertex set of $\Psi$ and $\wtil\Psi$. Since
$b(\Psi)\cdot D=b(\wtil \Psi)\cdot D=\sum_{\text{roots}}\mu$, and the
number $q$ of relative markings is the same for $\Psi$ and
$\wtil\Psi$, we have
\begin{equation*}
    \vd \overline\cM_{\wtil \Psi}(\wtil X,D)
    =|I|\cdot(\dim X-3)
    +\int_{b(\Psi)}(c_1(X)-D)+k+q.
\end{equation*}
In particular, the Type~II locus has virtual codimension $|I|$. If
$|I|=1$, the proof of Theorem~\ref{thm:main1} applies verbatim, since
the relative divisor is disjoint from the blow-up locus. It therefore remains to treat $|I|>1$.

(1) First, assume $\wtil\Psi=\pi^!(\Psi)$. Let
\begin{equation*}
    q^{\exp}:W^{\exp}\longrightarrow\PP^1
\end{equation*}
be the projection from the expanded universal target. For a labeled
connected source component $w\in I$, we denote the restrictions of the
universal curve and universal map by
\begin{equation*}
    \rho_w:\cC_w\longrightarrow\overline\cM(W),
    \qquad
    f_w:\cC_w\longrightarrow W^{\exp}.
\end{equation*}
Define
\begin{equation*}
    K_{w,x}=R\rho_{w*}f_w^*(q^{\exp})^*\cO(x)
    \qquad\text{for }x\in\{0,\infty\}\subset\PP^1,
\end{equation*}
and let $\Theta_{w,x}=e^{\C^*}(K_{w,x})$ be its equivariant Euler class
on $\overline\cM(W)$. For a subset $I'\subset I$ and $w\notin I'$,
consider the codimension-$(|I|-1)$ class
\begin{equation*}
    A_{I',w}
    =\prod_{i\in I'}\Theta_{i,0}
     \cdot
     \prod_{i\notin I'\cup\{w\}}\Theta_{i,\infty}
\end{equation*}
in the operational Chow group of $\overline\cM(W)$. Since
Lemma~\ref{lem: contribution vanishes} can be applied individually to
each connected component, 
we can use Type~IV vanishing as in Theorem~\ref{thm:main1}. The componentwise argument takes place on the full relative fixed locus; it does not use a product decomposition by connected source components. For every relative fixed graph
$\Gamma$, pass to to a single finite cover
\begin{equation*}
    \mathfrak b_\Gamma:\what\cM_\Gamma\longrightarrow
    \overline\cM_\Gamma^{\mathrm{rel}}
\end{equation*}
on which all free unstable univalent $0$-vertices are labeled. For $w\in I$, let $U_{1,w}^0$ be the number of such leaves. Contract these leaves and retain their attaching brances as markings $a_\mathfrak l$. The common expanded target and
all other source components are retained. This gives a morphism
\begin{equation*}
    \omega_{\Gamma,w}:
    \what\cM_\Gamma\longrightarrow\mathcal B_{\Gamma,w}.
\end{equation*}
It is an iterated $\mu_{d_e}$-gerbe over projective bundles, with
successive projective-bundle factors
$\PP(\ev_{a_\mathfrak l}^*N_{Z/X})$ for each leaf $\mathfrak l$, and hence has relative
dimension $(r-1)U_{1,w}^0$, as in the absolute case. In particular, we get
\begin{equation*}
    [\what\cM_\Gamma]^{\vir}
    =\omega_{\Gamma,w}^!
      [\mathcal B_{\Gamma,w}]^{\vir}.
\end{equation*}
This compatibility includes the global logarithmic deformation terms;
since $D\cap Z=\varnothing$, the pre-deformability terms
$\bigoplus_iL_i^{\oplus j_i}$ in~\eqref{eq:Relobstruction} and the
deformation--automorphism data of the common expansion are unchanged under pullback.
On the same cover, there are moving virtual bundles
$\mathsf K_{\Gamma,w}$ such that
\begin{equation*}
    [\mathfrak b_\Gamma^*N_\Gamma^{\vir}]
    =\sum_{w\in I}[\mathsf K_{\Gamma,w}]
    \qquad\text{in }
    K^0_{\C^*}(\what\cM_\Gamma).
\end{equation*}
Under $\omega_{\Gamma,w}$, the bundles
$\mathsf K_{\Gamma,v}$ for $v\neq w$, as well as all localization
factors not belonging to $C_w$, pullback from
$\mathcal B_{\Gamma,w}$. Thus the virtual-pushforward arguments of
Lemmas~\ref{lem: contribution vanishes} and
\ref{lem: contribution vanishes 2} apply by the projection formula. Hence, taking the localization residue of $p_*(A_{I',w}\cap[\overline\cM(W)]^{\vir})$ gives
\begin{equation*}
    p_*[F_{I'}]^{\vir}
    =p_*[F_{I'\cup\{w\}}]^{\vir}
\end{equation*}
in $A_*(\overline\cM_\Psi(X,D))$. Here $F_{I'}$ is the locus in which
components $i\in I'$ are of Type~II and components $i\notin I'$ are of
Type~I. Iterating these equalities proves the theorem in this case.

(2) Second, assume $\wtil\Psi\neq\pi^!(\Psi)$. Let $I_0\subsetneq I$ be
the set of vertices satisfying
$b_{\wtil\Psi}(w)=\pi^!b_\Psi(w)$. Consider the degree-$|I_0|$ class
\begin{equation*}
    A_0=\prod_{i\in I_0}\Theta_{i,0}
\end{equation*}
in the operational Chow group of $\overline\cM(W)$. We claim
that the $t^{-|I|+|I_0|}$-coefficient of
$p_*(A_0\cap[\overline\cM(W)]^{\vir})$ receives contributions only from
Type~II loci. For $w\in I\setminus I_0$, write
\begin{equation*}
    b_{\wtil\Psi}(w)=\pi^!b_\Psi(w)+M_w\ell,
\end{equation*}
and set $M_w=0$ for $w\in I_0$. Then $\sum_{w\in I}M_w=0$. On each fixed locus, set
\begin{equation*}
    a_w=\operatorname{rank}\mathsf K_{\Gamma,w},
    \qquad
    \frac{1}{e^{\C^*}(\mathsf K_{\Gamma,w})}
    =
    t^{-a_w}\sum_{\nu\geq0}
    P_{\Gamma,w,\nu}\,t^{-\nu}.
\end{equation*}
For each $w$, let $\nu_w$ denote the exponent chosen from this expansion. On every fixed locus not annihilated by $A_0$, each $C_w$ is of
Type~II or Type~IV, and $A_0$ restricts to a nonzero scalar multiple of
$t^{|I_0|}$. The dimensional arguments of
Lemmas~\ref{lem: contribution vanishes} and
\ref{lem: contribution vanishes 2} apply uniformly in the sign of
$M_w$ and show that, whenever $C_w$ is of Type~IV,
\begin{equation*}
    (\omega_{\Gamma,w})_*
    \left(
        P_{\Gamma,w,\nu_w}
        \cap[\what\cM_\Gamma]^{\vir}
    \right)
    =0
    \qquad\text{for }\nu_w\leq 1+M_w-a_w.
\end{equation*}
The coefficient condition, together with
$\sum_{w\in I}M_w=0$, gives
\begin{equation*}
    \sum_{w\in I}\nu_w
    =
    \sum_{w\in I}(1+M_w-a_w).
\end{equation*}
For every Type~II component $C_w$, we have 
$\nu_w\geq0=1+M_w-a_w$. For a Type~IV component to contribute, it
must satisfy $\nu_w>1+M_w-a_w$. This contradicts the preceding equality. Therefore only Type~II loci
contribute, proving the claim and hence the desired vanishing.
\end{proof}

\subsection{Generalization of Gathmann's formula}
In this section, we prove a blow-up formula in genus 0 generalizing \cite[Corollary 3.2]{Gat01}. Recall the localization graph description of Type~IV loci from Section~\ref{sec: proof}. Let $\Gamma_{\Gat}$ be the localization graph with a $0$-vertex $v$ and a $*$-vertex $u$. Let there be $k$ legs attached to $v$ and one leg attached to $u$. The degrees are given by $\beta_v=\pi^!\beta-\ell$, $\beta_u=0$, and $\beta_e=\ell$. As a corollary of Lemma~\ref{lem:fixed locus gysin}, we have an embedding 
\begin{equation*}
    \iota_{\Gat}:\overline\cM_{\Gamma_{\Gat}}\into \overline\cM_{0,k+1}(\wtil X,\pi^!\beta-\ell)
\end{equation*}
whose image is $\ev_{k+1}^{-1}(E)$. Lemma~\ref{lem:fixed locus gysin} also identifies the corresponding virtual cycle.

\begin{coro}
\label{cor:M_Gat}
    We have
    \begin{equation*}
    \iota_{\Gat*}[\overline\cM_{\Gamma_{\Gat}}]^{\vir}=\ev_{k+1}^*[E]\cap [\overline\cM_{0,k+1}(\wtil X,\pi^!\beta-\ell)]^{\vir}
    \end{equation*}
    in $A_*(\overline\cM_{0,k+1}(\wtil X,\pi^!\beta-\ell))$.
\end{coro}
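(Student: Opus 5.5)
We apply Lemma~\ref{lem:fixed locus gysin} to the graph $\Gamma_{\Gat}$ and then recognize the resulting refined Gysin pullback as cutting $\overline\cM_{0,k+1}(\wtil X,\pi^!\beta-\ell)$ by the divisor condition $\ev_{k+1}\in E$. In $\Gamma_{\Gat}$ there is one edge $e$ of degree $d_e=1$. The $0$-vertex $v$ has the $k$ legs and the edge flag, so $\overline\cM_v=\overline\cM_{0,k+1}(\wtil X,\pi^!\beta-\ell)$, with the $(k+1)$-st marking being the edge flag. The $*$-vertex $u$ is bivalent unstable (one leg, one edge, degree zero), so $\overline\cM_u=Z$. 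Since $d_e=1$, the edge space $\overline\cM_e$ is $E$ itself, with no gerbe structure. Its evaluations are $\ev_e^-=j:E\into\wtil X$ and $\ev_e^+=\pi|_E:E\to Z$.

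\textbf{Step 1: simplify the fibre product.} The matching at $u$ is the bivalent case treated at the end of the proof of Lemma~\ref{lem:fixed locus gysin}. There $\Delta_Z^!([Z]\boxtimes[E])=[E]$, since the diagonal of $Z$ pulled back along $\mathrm{id}_Z\times(\pi|_E)$ is just the graph of $\pi|_E$. Hence
\begin{equation*}
[\overline\cM_{\Gamma_{\Gat}}]^{\vir}=\Delta_{\wtil X}^!\bigl([\overline\cM_{0,k+1}(\wtil X,\pi^!\beta-\ell)]^{\vir}\boxtimes[E]\bigr),
\end{equation*}
and set-theoretically $\overline\cM_{\Gamma_{\Gat}}\cong\overline\cM_{0,k+1}(\wtil X,\pi^!\beta-\ell)\times_{\ev_{k+1},\wtil X,j}E=\ev_{k+1}^{-1}(E)$. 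This identification is the embedding $\iota_{\Gat}$: given $x=f(p_{k+1})\in E$, the edge is the unique fibre line over $x$ joining $x$ to $\pi(x)\in Z_*$, and the leg sits at its $Z_*$ end.

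\textbf{Step 2: turn the diagonal pullback into a divisor pullback.} This is the standard identity
\begin{equation*}
\Delta_{\wtil X}^!(\alpha\boxtimes[E])=j^!\alpha\quad\text{for the Cartesian square with }\ev_{k+1}\text{ and }j:E\into\wtil X,
\end{equation*}
which holds because $\Delta_{\wtil X}$ restricted to $\wtil X\times E$ is identified with the graph of $j$, a regular embedding with the same normal bundle (compatibility of refined Gysin maps, \cite[Ch.~6]{Fulton}). Since $E$ is a Cartier divisor, $\iota_{\Gat*}j^!\alpha=\ev_{k+1}^*c_1(\cO(E))\cap\alpha=\ev_{k+1}^*[E]\cap\alpha$. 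Applying this with $\alpha=[\overline\cM_{0,k+1}(\wtil X,\pi^!\beta-\ell)]^{\vir}$ gives the claim.

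\textbf{Main obstacle.} The delicate point is checking that the scheme-theoretic identification in Step~1 is compatible with the Gysin maps: $\iota_{\Gat}$ must identify the fibre product of Lemma~\ref{lem:fixed locus gysin} with the scheme-theoretic preimage $\ev_{k+1}^{-1}(E)$, not merely its reduction, and the unstable vertex $u$ must be absorbed without extra automorphisms. Both follow from $d_e=1$ and from the trivial $\Aut(\Gamma_{\Gat})$, because $u$ carries a leg. If needed, the first of these is checked by comparing functors of points: the edge map over a family is determined uniquely by its endpoint in $E$.
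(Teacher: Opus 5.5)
Your argument is correct and takes the same route as the paper, which simply deduces the corollary from Lemma~\ref{lem:fixed locus gysin} applied to $\Gamma_{\Gat}$. You fill in the standard steps the paper leaves implicit: absorbing the bivalent unstable $*$-vertex via $\Delta_Z^!$, identifying $\Delta_{\wtil X}^!(\alpha\boxtimes[E])$ with the refined Gysin pullback $j^!\alpha$, and using that $E$ is Cartier to get $\ev_{k+1}^*[E]\cap\alpha$, with $d_e=1$ and trivial $\Aut(\Gamma_{\Gat})$ ensuring no gerbe or automorphism factors appear.
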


 For this subsection, use the master space $\overline\cM(W) := \overline\cM_{0,k+1}(W,\pi^!\beta)$. The description above will yield $[\overline\cM_{\Gamma_{\Gat}}]^{\vir}$. Let $\cZ\subset W$ be the proper transform of $Z\times \PP^1\subset X\times \PP^1$. The $\C^*$-equivariant class of $\cZ$ has the following restrictions.
 \begin{equation}
     \label{eq: restrictions of Z-insertion}
     [\cZ]_{\C^*}\vert_{X_\infty}=[Z], \quad[\cZ]_{\C^*}\vert_{\wtil X}=0,\quad[\cZ]_{\C^*}\vert_{Z_*}=\prod_{i=1}^r(t+\delta_i)
 \end{equation}
 where $\delta_i$ are the Chern roots of $N_{Z/X}$. We recall the following notation for a Type~IV graph $\Gamma$ used in Section~\ref{sec: Type IV vanishing}.
 \begin{equation}
 \label{eq: summary of notation}
 \begin{gathered}
     n=|V_0|,\quad m=|V_*|,\quad h=|\Edg|,\quad \delta_\Gamma=c_1(N_{Z/X})\cdot \beta_*+r, \\ a_\Gamma=\vcd\overline\cM_\Gamma,\quad
     U_i=|V_i^{\un}(\Gamma)|,\quad S=n+m-U_1-U_2,\\
     U_1^\bullet=\text{number of univalent unstable $\bullet$-vertices},\quad (\bullet=0,*)\\
     \omega_\Gamma:\what\cM_\Gamma\longrightarrow\overline\cM_{\wtil \Gamma}
 \end{gathered}
 \end{equation}
 $\omega_\Gamma$ is the contraction of $\overline\cM_e$ attached to unstable univalent $0$-vertices.
\begin{thm}\label{thm:Gathmann}
Assuming positivity of the blow-up center $Z$ and $\beta\neq0$, we have the following generalization of Gathmann's blow-up formula
\begin{equation*}
\langle \pi^*\gamma_1,\cdots, \pi^*\gamma_k\rangle^{\wtil{X}}_{0,\pi^!\beta-\ell}=\langle \gamma_1,\cdots, \gamma_k,[Z]\rangle^X_{0,\beta}    
\end{equation*}
for $\gamma_i\in \rH^*(X;\Z)$.   
\end{thm}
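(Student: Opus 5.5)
We apply localization to the master space $\overline\cM(W)=\overline\cM_{0,k+1}(W,\pi^!\beta)$ with the equivariant insertion $\ev_{k+1}^*[\cZ]_{\C^*}$ at the extra marking, together with $\prod_{i\le k}\ev_i^*p^*\gamma_i$. The integral
\[
\int_{[\overline\cM(W)]^{\vir}}\prod_{i=1}^k\ev_i^*p^*\gamma_i\cdot\ev_{k+1}^*[\cZ]_{\C^*}
\]
is a polynomial in $t$. Its $t^{-1}$-coefficient, extracted from the fixed-point expansion \eqref{eq:localization}, therefore vanishes. We then examine each type of fixed locus in turn using the restrictions \eqref{eq: restrictions of Z-insertion}.

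\textbf{Type~I.} The restriction of the insertion is $[Z]$ and the normal factor is $1/t$. This gives $\langle\gamma_1,\dots,\gamma_k,[Z]\rangle^X_{0,\beta}$.

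\textbf{Type~II.} Here $[\cZ]_{\C^*}|_{\wtil X}=0$, so this locus contributes nothing.

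\textbf{Type~III.} The insertion restricts to $\prod_i(t+\delta_i)$, which has $t$-degree $r$. By \eqref{eq:vdim3}, the virtual codimension of the locus is $1+r+c_1(N_{Z/X})\cdot\beta_Z$. For $\beta_Z\neq0$ this exceeds $1+r$ by positivity (Remark~\ref{rmk: alternative of positivity}), since $c_1(N_{Z/X})\cdot\beta_Z\ge0$ and the inequality in Definition~\ref{defn:positivity} is strict. A degree count, which I still need to check carefully, shows the $t^{-1}$ term vanishes. The case $\beta_Z=0$ cannot occur because $\beta\neq0$.

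\textbf{Type~IV.} The insertion factor is nonzero only when the marking $k+1$ lies on a $*$-vertex, where it contributes a factor of $t$-degree at most $r$. The distinguished graph is $\Gamma_{\Gat}$. Its $*$-vertex is unstable bivalent (one leg, one edge), and the edge has degree one. By Lemma~\ref{lemm:vdim1} with $m=h=1$, $\mathfrak e=1$, $U_2=1$, $\delta=r$, we get $\vcd=1+r+1-1=r+1$. The insertion restricts to $\prod_i(t+\delta_i)$ pulled back along $E\to Z$. The virtual normal bundle consists of the node-smoothing line $T_{E}\text{-direction}\otimes T_{\ell}$, which has weight $-t$ up to twisting by $\cO_E(-1)$, together with the moving part of the edge deformation $N_{E/\wtil X}$ twisted. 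Computing $e^{\C^*}(N^{\vir})^{-1}\cdot\prod(t+\delta_i)$ should give $-t^{-1}$ times $1$ at leading order. Corollary~\ref{cor:M_Gat} then converts the contribution to
\[
-\int_{[\overline\cM_{0,k+1}(\wtil X,\pi^!\beta-\ell)]^{\vir}}\prod\ev_i^*\pi^*\gamma_i\,\ev_{k+1}^*[E],
\]
which by the divisor equation, using $E\cdot(\pi^!\beta-\ell)=1$, equals $-\langle\pi^*\gamma_1,\dots,\pi^*\gamma_k\rangle^{\wtil X}_{0,\pi^!\beta-\ell}$. Getting the sign and the exact normal-bundle bookkeeping right is routine but delicate. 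I would check it against Gathmann's point case.

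The main obstacle is showing that all other Type~IV graphs with the marking on a $*$-vertex contribute zero. Carrying the insertion raises the effective virtual-codimension budget by $r$. I would redo the computation of Lemma~\ref{lem: contribution vanishes}, now requiring $1+r-a_\Gamma<\rel.\vd\,\omega_\Gamma=(r-1)U_1^0$. From the formula in that proof,
\[
a_\Gamma-1+U_1^0=\delta_\Gamma+r(m-1)+(S-1)+(r-1)(\mathfrak e-h)-U_1^*,
\]
and I need this to exceed $r+1-(r-2)U_1^0$. This holds unless $m=1$, $\beta_*=0$, $\mathfrak e=h$, $S$ is small, and $U_1^0=0$, and I would argue those constraints force $\Gamma=\Gamma_{\Gat}$.

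Two other cases need separate handling. When the marked $*$-vertex is a bivalent leaf but the edge has degree $d_e>1$, the term $(r-1)(\mathfrak e-h)$ supplies the excess. When the unstable $*$-vertex also has further edges, the term $r(m-1)+h$ grows. I expect the residual boundary cases, for instance $n=1$ with several edges, to need the projection formula plus virtual pushforward along $\omega_\Gamma$ rather than pure dimension counting.

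Combining the surviving contributions gives $0=\langle\gamma_1,\dots,\gamma_k,[Z]\rangle^X_{0,\beta}-\langle\pi^*\gamma_1,\dots,\pi^*\gamma_k\rangle^{\wtil X}_{0,\pi^!\beta-\ell}$, which is the statement.
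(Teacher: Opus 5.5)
Your overall route matches the paper's: the master space $\overline\cM_{0,k+1}(W,\pi^!\beta)$ with insertion $\prod_i\ev_i^*\gamma_i\cdot\ev_{k+1}^*[\cZ]_{\C^*}$, Type~II killed by $[\cZ]_{\C^*}|_{\wtil X}=0$, the $\Gamma_{\Gat}$ term identified through Corollary~\ref{cor:M_Gat} with leading coefficient $-t^{-1}$, and then the divisor equation. However, the Type~III step contains a genuine error, and the Type~IV threshold is off by one.

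\textbf{Type~III.} Positivity does not give $c_1(N_{Z/X})\cdot\beta_Z>0$. The strict inequality in Definition~\ref{defn:positivity} is $c_1(N_{Z/X})\cdot f_*[\PP^1]+r>0$. By Remark~\ref{rmk: alternative of positivity}, this amounts only to $c_1(N_{Z/X})\cdot\beta_Z\ge0$. Nonzero classes with $c_1(N_{Z/X})\cdot\beta_Z=0$ do occur, e.g.\ for $Z=Z'\times\{y\}\subset Z'\times Y$, whose normal bundle is trivial. For such classes the Type~III locus has virtual codimension exactly $r+1$, and $\prod_i(t+\delta_i)/e^{\C^*}(N^{\vir})=-t^{-1}+O(t^{-2})$. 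Its $t^{-1}$-coefficient is therefore
\[
-\int_{[\overline\cM_{0,k+1}(Z,\beta_Z)]^{\vir}}\prod_{i=1}^k\ev_i^*(\gamma_i|_Z),
\]
and no count of powers of $t$ removes it. The missing step is the string (fundamental class) axiom. Nothing is inserted at the $(k+1)$-st marking, so this integral vanishes because $\beta_Z\neq0$. This is exactly where the hypothesis $\beta\neq0$ is used; noting that it excludes $\beta_Z=0$ accomplishes nothing by itself. As a check, for $\beta=0$ and $k=2$ this Type~III term is $-\int_Z\gamma_1\gamma_2$. It cancels the Type~I term, and the formula indeed fails.

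\textbf{Type~IV.} The condition $1+r-a_\Gamma<(r-1)U_1^0$ is equivalent to $a_\Gamma-1+U_1^0>r-(r-2)U_1^0$, not $>r+1-(r-2)U_1^0$. With your stronger threshold, graphs with $\delta_\Gamma=r$, $m=1$, $U_1^0=U_1^*=0$, $\mathfrak e=h$ and $S=2$ fail your inequality. They are not $\Gamma_{\Gat}$; they have $a_\Gamma=r+2$ and are already killed by the $t$-degree count. So your exceptional constraints would not force $\Gamma=\Gamma_{\Gat}$.

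With the correct threshold, no further case analysis is needed. The last leg sits on a $*$-vertex, so that vertex is not a univalent leaf and $m-U_1^*\ge1$. The identity in the proof of Lemma~\ref{lem: contribution vanishes} then becomes
\[
a_\Gamma-1-r+U_1^0=(\delta_\Gamma-r)+r(m-U_1^*-1)+(r-1)U_1^*+(S-1)+(r-1)(\mathfrak e-h)\ge0 .
\]
If the inequality is strict, then $r+1-a_\Gamma<U_1^0\le(r-1)U_1^0$. Virtual pushforward along $\omega_\Gamma$ then kills the term, since $A$ and $\ev_{k+1}^*c_j(N_{Z/X})$ descend through $\omega_\Gamma$.

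If it is an equality, then $\delta_\Gamma=r$, $m=1$, $U_1^*=0$, $S=1$ and $\mathfrak e=h$. Because $\mathfrak e>0$, some $0$-vertex is stable. Hence the unique $*$-vertex is unstable and bivalent, carrying the last leg and one edge. This forces $\Gamma=\Gamma_{\Gat}$, with $U_1^0=0$.
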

\begin{proof}
We may assume $\gamma_i\neq1$ for all $i$. Consider the insertion class $$A = \prod_{i=1}^k \ev_i^*\gamma_i$$ whose degree equals $\vd\overline\cM_{0,k}(X,\beta)-r+1$. We suppress pullbacks of insertion classes when the relevant maps are clear from the context. Consider the residue of 
\begin{equation}
\label{eq: Gathmann residue}
\mathrm{str}_*\left([\overline\cM(W)]^{\vir}\cap A\cap \ev_{k+1}^*[\cZ]\right)    
\end{equation}
where $\mathrm{str}$ is the structure morphism $\overline\cM(W)\to\pt$.
By the second restriction in \eqref{eq: restrictions of Z-insertion}, the $(k+1)$-st marking of Type~IV graphs should lie in $Z_*$. In particular, the Type~II locus does not contribute to the residue. This time, because the insertion $[\cZ]$ contributes powers of $t$ of degree at most $t$, Type~IV graphs $\Gamma$ with $a_\Gamma>r+1$ will not contribute to the residue. Hence, we assume $a_\Gamma\leq r+1$. Recall the virtual dimension calculation of Lemma~\ref{lemm:vdim1}. We have
\begin{equation}
\label{eq: vdim for Gathmann}
    r+1-a_\Gamma\leq U_1^0\leq(r-1)\cdot U_1^0,
\end{equation}
where the first inequality follows from
\begin{equation*}
    a_\Gamma-1-r+U_1^0=(\delta_\Gamma-r)+r(m-U_1^*-1)+(r-1)U_1^*+(S-1)+(r-1)(\mathfrak e-h)\geq 0.
\end{equation*}
Here $m-U_1^*\geq1$ follows from the fact that the $(k+1)$-st marking lies in $Z_*$, and $\delta_\Gamma\geq r$ (see Remark~\ref{rmk: alternative of positivity}). If the first inequality of \eqref{eq: vdim for Gathmann} is strict, $[\what \cM_{\Gamma}]^{\vir}=\omega_\Gamma^![\overline\cM_{\wtil \Gamma}]^{\vir}$ and virtual pushforward formulas prove the vanishing of contributions, as in Lemma~\ref{lem: contribution vanishes}. Indeed, if we write the expansion in $t^{-1}$ as
\begin{equation*}
    \frac{1}{e^{\C^*}\left(N^{\vir}_{\overline\cM_\Gamma}\right)}=t^{-a_\Gamma}\sum_{\nu\geq 0}P_{\Gamma,\nu}t^{-\nu},
\end{equation*}
then from \eqref{eq: restrictions of Z-insertion}, we have
\begin{equation*}
    [t^{-1}]\frac{\ev_{k+1}^*[\cZ]}{e^{\C^*}\left(N^{\vir}_{\overline\cM_\Gamma}\right)}=\sum_{j+\nu=r+1-a_\Gamma}\ev_{k+1}^*c_j(N_{Z/X})\cdot P_{\Gamma,\nu}.
\end{equation*}
Both $A$ and the classes $c_j(N_{Z/X})$ descend through $\omega_\Gamma$, hence $\nu\leq r+1-a_\Gamma$ yields vanishing for dimensional reasons. Equality in the first inequality of \eqref{eq: vdim for Gathmann} can hold only when
\begin{equation*}
    \delta_\Gamma=r,\quad m=1,\quad U_1^*=0,\quad S=1,\quad \mathfrak e=h.
\end{equation*}
The relation $\sum_{v\in V_0}e_v=e>0$ forces at least one $0$-vertex to be stable. Since $S=1$, the unique $*$-vertex is therefore unstable. Because $U_1^*=0$ and this vertex carries the $(k+1)$-st leg, it is bivalent and incident to exactly one edge. Hence, we have $n=m=h=\mathfrak e=1$, which singles out $\Gamma=\Gamma_{\Gat}$. Its virtual cycle was computed in Corollary~\ref{cor:M_Gat}. It remains to show that Type~III loci do not contribute. For an effective class $\beta_Z\in \rH_2(Z;\Z)$ satisfying $\iota_*\beta_Z=\beta$, the positivity assumption gives $c_1(N_{Z/X})\cdot \beta_Z\geq0$. If $c_1(N_{Z/X})\cdot \beta_Z>0$, then the virtual codimension of Type~III fixed locus is greater than $r+1$, hence does not contribute to the residue. If $c_1(N_{Z/X})\cdot \beta_Z=0$, its contribution is
\begin{equation*}
    -\langle\gamma_1\vert_Z,\cdots, \gamma_k\vert_Z,1\rangle_{0,\beta_Z}^Z=0
\end{equation*}
by the string equation. Here we use the assumption $\beta\neq 0$. By the discussion above, the residue formula for \eqref{eq: Gathmann residue} simplifies to the following.
\begin{multline*}
\res_{t=0} \left( \int\frac{[\overline\cM_{0,k+1}(X,\beta)]^{\vir} \cap A\cap\ev_{k+1}^*[Z]}{e^{\C^*}\left(N_{\overline\cM_{0,k+1}(X,\beta)}^{\vir}\right)}\right.\\
\left.+\int\frac{\left([\overline\cM_{0,k+1}(\wtil X,\pi^!\beta-\ell)]^{\vir} \cap A\cap \ev_{k+1}^*[E]\right)\cdot\prod_{i=1}^r (t+\delta_i)}{e^{\C^*}\left(N_{\overline\cM_{\Gamma_\Gat}}^{\vir}\right)} \right) = 0.
\end{multline*}
Again, we have
\begin{equation*}
\frac{1}{e^{\C^*}\left(N_{\overline\cM_{0,k+1}(X,\beta)}^{\vir}\right)} = \frac{1}{t}\quad\text{and}\quad \frac{1}{e^{\C^*}\left(N_{\overline\cM_{\Gamma_\Gat}}^{\vir}\right)} = \frac{-1}{t^{r+1}} + O(t^{-r-2}).
\end{equation*}
For the second expansion, deformations of $C_e$ contribute $t^r$ to the moving part of the normal bundle and smoothing the node at the $\wtil X$-endpoint contributes $-t$. Substituting these into the residue formula gives
\begin{align*}
    \int_{[\overline\cM_{0,k+1}(X,\beta)]^{\vir}}A \cap \ev_{k+1}^*[Z]&= \int_{[\overline\cM_{0,k+1}(\wtil X,\pi^!\beta-\ell)]^{\vir}}A\cap \ev^*_{k+1}[E]\\
    &=\int_{[\overline\cM_{0,k}(\wtil X,\pi^!\beta-\ell)]^{\vir}} A
\end{align*}
by the divisor equation.
\end{proof}

\section{Higher genus results via degeneration}
The preceding sections used the master space method to establish blow-up formulas in genus zero. In this section, we focus on how equivariant degeneration methods can be used to prove results in higher genus. Section~\ref{sec:higher genus} proves an absolute--relative correspondence for genus $g\leq 1$ (Theorem~\ref{thm: main Abs-Rel corr}). For $g\geq 2$, an additional assumption on the dimension of $Z$ is required (Theorem~\ref{thm: all genus Abs-Rel corr}). Although this assumption may appear technical, it cannot in general be omitted, as demonstrated in Section~\ref{sec:counterex of Hu's conj} by a counterexample to Conjecture \ref{conj:Hu}.

\subsection{Absolute--relative correspondence}
\label{sec:higher genus}
Let $X$ be a smooth projective variety of dimension $d$, let $Z$ be a smooth subvariety of codimension $r$, and fix a curve class $\beta$. Fix a genus $g\geq 0$ and abbreviate the moduli space by $\overline\cM(X)\coloneqq\overline\cM_{g,k}(X,\beta)$. Denote by $E$ the exceptional divisor of $\wtil X=\Bl_ZX$ and consider the relative pair $(\wtil X,E)$ with degree $\pi^!\beta$. Since $\pi^!\beta\cdot E=0$, we may consider the admissible graph $\Psi^\circ$ with a single vertex of genus $g$, $k$ legs, no roots, and degree $\pi^!\beta$. Let
\begin{equation*}
   s: \overline\cM_{\Psi^\circ}(\wtil X, E) \to \overline\cM_{g,k}(\wtil X,\pi^!\beta)
\end{equation*}
be the proper stabilization map. The virtual dimension of $\overline\cM_{\Psi^\circ}(\wtil X, E)$ coincides with that of $\overline\cM(X)$ since there are no relative markings. Recall the schematic diagram of absolute--relative correspondences \eqref{eq:degn scheme}. Our goal in this section is to refine the first correspondence to the level of virtual cycles. The following naive virtual cycle identity need not hold in positive genus, even for a point blow-up (see Remark~\ref{rmk:counterex for g=1}):
\begin{equation*}
    \pi_*[\overline\cM_{g,k}(\wtil X,\pi^!\beta)]^{\vir}=[\overline\cM_{g,k}(X,\beta)]^{\vir},\qquad g>0.
\end{equation*}
 We now compare the virtual cycle of $\overline\cM_{\Psi^\circ}(\wtil X, E)$ with no relative markings and the virtual cycle of $\overline\cM(X)$.
 
 \begin{rmk}
     The terminology ``absolute--relative correspondence'' is inspired by the work of Tehrani and Zinger. In \cite[Theorem~1]{TZ16}, they prove, under suitable hypotheses including the $(g,\beta)$-hollowness of the relative divisor, that absolute Gromov--Witten invariants agree with the corresponding relative Gromov--Witten invariants. Although our relative divisor $E$ does not generally satisfy the $(g,\beta)$-hollowness condition, we will show below that an analogous equality nevertheless holds in our setting.
 \end{rmk}

We first extend the definition of positivity to genus one for all regular maps $f:C\to Z$ with $g(C)\leq 1$.
\begin{defn}
\label{defn:positivity for g=1}
We say that the blow-up center $Z \subset X$ is positive if it satisfies $c_1(N_{Z/X}) \cdot f_*[C] \geq0$ for any regular map $f : C\to Z$ with $g(C)\leq 1$.
\end{defn}
The following definition will be used to refine our result later.

\begin{defn}
\label{defn:supportive}
    We say that the blow-up center $Z\subset X$ is supportive with respect to $\beta$ if it is positive and $\beta$ is in the image of $\iota_*:\rH_2(Z)\to \rH_2(X)$, that is, there is an effective class $\beta_Z$ such that $\iota_*\beta_Z=\beta$ and
    \begin{equation*}
        c_1(N_{Z/X}) \cdot \beta_Z = 0.
    \end{equation*}
    In this case, define
    \begin{equation*}
        B_Z(\beta)=\{\text{effective }\beta_Z\in\rH_2(Z;\Z)\mid \iota_*\beta_Z=\beta\text{ and }c_1(N_{Z/X})\cdot \beta_Z=0\}
    \end{equation*}
    as the set of effective classes in $Z$ that are ``supportive'' with respect to $\beta$.
\end{defn}

With the notation fixed above, we now compare the relative virtual cycle with $[\overline\cM(X)]^{\vir}$ using an equivariant version of Li's degeneration formula. Positivity is understood in the sense of Definition~\ref{defn:positivity} in genus zero and Definition~\ref{defn:positivity for g=1} in genus one; the set $B_Z(\beta)$ is as in Definition~\ref{defn:supportive}

Our main ingredient is the degeneration formula of Li. Let $W\to B$ be a flat degeneration over a smooth curve, let $\fW\to B$ be its stack of expanded degenerations, and denote the fiber of $\fW$ over $0\in B$ by $\fW_0$. Fix a discrete data $\mathbf d=(g,k,b)$.

\begin{thm}[{{\cite[Theorem 3.15]{Li02}}}]
\label{thm: degeneration formula}
    Assume that the singular fiber of $W\to B$ is $W_0=Y_1\cup_DY_2$.
    Then, we have
    \begin{equation*}
        [\overline\cM(\fW_0,\mathbf d)]^{\vir}=
        \sum_{\eta\in\overline\Omega}
        \frac{\mathbf m(\eta)}{|\Eq(\eta)|}
        {\Phi_\eta}_*\Delta^!\left([\overline\cM_{\Psi_1}(Y_1,D)]^{\vir}
        \times [\overline\cM_{\Psi_2}(Y_2,D)]^{\vir}\right),
    \end{equation*}
    where $\mathbf m(\eta)$ and $|\Eq(\eta)|$ are constants associated to $\eta$. Following Li's notation, $\overline\Omega$ is the set of equivalence classes of admissible triples $(\Psi_1,\Psi_2,I)$, $q$ is the common number of roots of $\Psi_1$ and $\Psi_2$, and $\Phi_\eta$ is the gluing morphism for relative stable maps. The Gysin pullback along the diagonal morphism $
        \Delta:D^q\longrightarrow D^q\times D^q$
    is taken with respect to the relative evaluation maps $        \overline\cM_{\Psi_1}(Y_1,D)\to D^q$ and $       \overline\cM_{\Psi_2}(Y_2,D)\to D^q$.
\end{thm}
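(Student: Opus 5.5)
The statement is Li's degeneration formula, which the paper quotes; I would follow the structure of Li's argument in \cite{Li02}. The plan has three steps: (i) write $[\overline\cM(\fW_0,\mathbf d)]^{\vir}$ as a Gysin restriction of $[\overline\cM(\fW,\mathbf d)]^{\vir}$ to the central fiber; (ii) decompose the pulled-back central-fiber divisor into pieces indexed by splitting types; (iii) identify each piece with a glued product of relative moduli spaces.

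\emph{Step (i).} I would first construct a perfect obstruction theory on $\overline\cM(\fW,\mathbf d)$ relative to the stack $\fW$ of expanded degenerations (or to $B$ together with the prestable curve stack). Its deformation term uses $f^\dagger T_{W[l]/B[l]}$, and its obstruction term includes the pre-deformability summands $L_i^{\oplus j_i}$, exactly as in \eqref{eq:Relobstruction}. Since $0\in B$ is a Cartier divisor and the obstruction theory is compatible with base change along $0\to B$, functoriality of virtual classes \cite{KKP03} gives $[\overline\cM(\fW_0,\mathbf d)]^{\vir}=0^![\overline\cM(\fW,\mathbf d)]^{\vir}$.

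\emph{Step (ii).} On $\overline\cM(\fW_0,\mathbf d)$, the pullback of $B[l]_0=\bigcup_i B[l]_i$ is no longer irreducible. Near a pre-deformable map with $j$ distinguished nodes over a singular divisor, the local equation of that divisor pulls back to $\prod_{\text{nodes}} z_x^{\mu_x}$, one factor for each node with contact order $\mu_x$. This shows that the Cartier divisor $\overline\cM(\fW_0,\mathbf d)$ decomposes as a sum of effective Cartier divisors $\mathbf m(\eta)\,\mathcal D_\eta$. Here $\eta$ runs over admissible triples $(\Psi_1,\Psi_2,I)$ recording how the source splits along the first singular divisor, and $\mathbf m(\eta)=\prod_{\text{roots}}\mu$. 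Each $\mathcal D_\eta$ carries its own line bundle and section, induced from the factor $L_i$. I would then show that the obstruction theory restricts compatibly to each $\mathcal D_\eta$, so that
\[
[\overline\cM(\fW_0,\mathbf d)]^{\vir}=\sum_\eta \mathbf m(\eta)[\mathcal D_\eta]^{\vir}.
\]

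\emph{Step (iii).} Identify $\mathcal D_\eta$, modulo the ordering of roots and relabeling, with the fiber product $\overline\cM_{\Psi_1}(Y_1,D)\times_{D^q}\overline\cM_{\Psi_2}(Y_2,D)$ via the gluing map $\Phi_\eta$. The factor $|\Eq(\eta)|$ accounts for the finite-to-one nature of $\Phi_\eta$ coming from labeling roots. Then compare obstruction theories: the analogue of the normalization sequence \eqref{eq:Dagger SES}, in which $f^\dagger T_{W_0}$ is the kernel of $f_1^*T_{Y_1}(-\log D)\oplus f_2^*T_{Y_2}(-\log D)\to\bigoplus_x \ev_x^*T_D$, shows that the restricted theory is the fiber-product theory over the diagonal $\Delta:D^q\to D^q\times D^q$. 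This is the same mechanism as Lemma~\ref{lem:fixed locus gysin}, and it yields $[\mathcal D_\eta]^{\vir}=\frac{1}{|\Eq(\eta)|}\Phi_{\eta*}\Delta^!([\overline\cM_{\Psi_1}]^{\vir}\times[\overline\cM_{\Psi_2}]^{\vir})$, where the $\frac{1}{|\Eq(\eta)|}$ absorbs the degree of $\Phi_\eta$ onto its image.

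The main obstacle will be Step (ii): proving the divisor decomposition with the correct multiplicities $\mathbf m(\eta)$ and showing that the obstruction theory splits compatibly along each $\mathcal D_\eta$. The difficulty is that the pre-deformability obstruction $L_i^{\oplus j_i}$ interacts with the expansion parameters. I would first try Li's local model: write the map near each distinguished node in coordinates $z$, $w$ with $zw=t_i$ and $z^{\mu}$, $w^{\mu}$ matched, then compute the resulting ideal explicitly.
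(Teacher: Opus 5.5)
The paper gives no proof of this statement: it quotes \cite[Theorem 3.15]{Li02} and uses it as a black box. Your three-step plan ($0^!$ of the family class, splitting of the central fiber by $\eta$, identification of each piece with the glued relative moduli via $\Phi_\eta$) has the right architecture. Step (ii), however, rests on a wrong local computation, and it is exactly the step that is supposed to produce $\mathbf m(\eta)$.

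At a node $x$ of the source over a singular divisor $D_i$ of $W[l]_0$, take source smoothing $zw=\sigma_x$ and target smoothing $uv=t_i$. Pre-deformability gives $u=\alpha z^{\mu_x}$ and $v=\beta w^{\mu_x}$, hence $t_i=\alpha\beta\,\sigma_x^{\mu_x}$ with $\alpha\beta$ a unit. This relation holds separately for every node over $D_i$, so $t_i$ is not $\prod_x\sigma_x^{\mu_x}$. Otherwise $\{t_i=0\}$ would contain loci where only some of the distinguished nodes persist, whereas over $t_i=0$ all of them persist.

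The product that indexes the components is $t=\prod_i t_i$ over the singular divisors of the expanded target. A component is therefore a choice of any singular divisor $D_i$ together with the splitting type along it. It is not tied to the first divisor: a map to $W[l]_0$ lies on $l+1$ components. These components carry multiplicity one in the target parameters.

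In a local model, after absorbing units, the moduli space is $\{\sigma_1^{\mu_1}=\cdots=\sigma_j^{\mu_j}\}$ times smooth factors. It has $\prod_x\mu_x/\operatorname{lcm}(\mu_x)$ branches, and on each branch $t_i$ vanishes to order $\operatorname{lcm}(\mu_x)$. So $\{t_i=0\}$ is in general not $\mathbf m(\eta)$ times an effective Cartier divisor. For $\mu=(2,2)$, for instance, $t_i$ is a square but not a fourth power on either branch. The sections $s_\eta$ with $s_\eta^{\mathbf m(\eta)}=t_i$ that your Step (ii) needs do not exist.

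What is needed instead is to keep the components $\{t_i=0\}$ with multiplicity one. You then prove directly that the class restricted to such a component equals $\frac{\mathbf m(\eta)}{|\Eq(\eta)|}\Phi_{\eta*}\Delta^!(\cdots)$.

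The factor $\prod_x\mu_x$ comes from the $j_i$ relations $t_i=\alpha_x\beta_x\sigma_x^{\mu_x}$. These are precisely the pre-deformability obstructions $L_i^{\oplus j_i}$ in \eqref{eq:Relobstruction}. Work relative to a smooth stack of prestable curves and expanded degenerations, on which $t_i$ and the $\sigma_x$ are independent coordinates. There, setting $t_i=0$ imposes $\sigma_x^{\mu_x}=0$ for each $x$, which is $\mu_x$ times the reduced condition $\sigma_x=0$ cutting out the glued locus. The glued locus's own obstruction theory drops the $j_i$ directions $\sigma_x$, the direction $t_i$, and these $j_i$ summands $L_i$ (net change $-1$).

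Your Step (iii) comparison through \eqref{eq:Dagger SES} matches only the $H^0$/$H^1$ parts of the two theories and never sees this extra data. The multiplicity has to be extracted by this computation, not by a Cartier-divisor decomposition of the central fiber.
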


Now recall the space $W=\Bl_{Z\times\{0\}}(X\times\PP^1)$ whose fiber over $\infty$ is naturally isomorphic to $X$, while its central fiber is
\begin{equation*}
    W_0=\wtil X\cup_E\what E,\qquad
    \what E\coloneqq\PP_Z(N_{Z/X}\oplus1).
\end{equation*}
Let $\fW\to\PP^1$ be the associated stack of expanded degenerations. We abbreviate the moduli of pre-deformable stable maps to its fibers by
\begin{equation*}
    \overline\cM(\fW/\PP^1)
    \coloneqq
    \overline\cM(\fW,(g,k,\pi^!\beta)).
\end{equation*}
The blow-down morphism $W\to X\times\PP^1$, followed by projection to $X$ and stabilization, induces a proper morphism
\begin{equation*}
    \wtil p:\overline\cM(\fW/\PP^1)\longrightarrow\overline\cM(X).
\end{equation*}

Recall that $0:= [1:0] \in \PP^1$ and $\infty:= [0:1] \in \PP^1$. Consider a $\C_u^*$-action on $\PP^1$ given by $\lambda \cdot [a:b] := [\lambda a : b]$ for $\lambda \in \C^*$ and $[a:b] \in \PP^1$. This action induces a $\C_u^*$-action on the degeneration and on the moduli of maps to its expanded fibers. It acts trivially on $\wtil X$ and induces the fiberwise action on $\what E=\PP_Z(N_{Z/X}\oplus1)$ fixing $E=\PP_Z(N_{Z/X}\oplus0)$ and $\PP_Z(0\oplus1)\cong Z$ pointwise. We use the subscript $u$ for $\C_u^*$ to distinguish it from the earlier action on the moduli space of ordinary stable maps.

The proof of Theorem~\ref{thm: degeneration formula}, carried out equivariantly as in \cite[Section 4]{Li02}, gives the equivariant degeneration formula
\begin{multline}
\label{eq:equivariant degeneration formula}
    [\overline\cM(\fW_0,(g,k,\pi^!\beta))]^{\vir,\C_u^*}=\\
    \sum_{\eta\in\overline\Omega} \frac{\mathbf m(\eta)}{|\Eq(\eta)|}
    {\Phi_\eta}_* \Delta^!\left([\overline\cM_{\Psi_1}(\wtil X,E) \times \overline\cM_{\Psi_2}(\what E,E)]^{\vir,\C_u^*}\right),
\end{multline}
where 
$$
\Delta^! : A_*( \overline\cM_{\Psi_1}(\wtil X,E) \times \overline\cM_{\Psi_2}(\what E,E))^{\C_u^*} \to A_*(\overline\cM_{\Psi_1}(\wtil X,E) \times_{E^q} \overline\cM_{\Psi_2}(\what E,E))^{\C_u^*}
$$
is the equivariant Gysin pull-back. Recall that $0$ and $\infty$ are the fixed points of the $\C_u^*$-action. We have $[\infty]^{\C_u^*} = h + u$ and $[0]^{\C_u^*} = h$ in $\rH^*(\PP^1)^{\C_u^*}$, where $h$ is the hyperplane class. Let
\begin{gather}
    \jmath_\infty:
    \overline\cM(X)\longrightarrow\overline\cM(\fW/\PP^1),
    \qquad \jmath_0: \overline\cM(\fW_0,(g,k,\pi^!\beta))
    \longrightarrow\overline\cM(\fW/\PP^1)
\end{gather}
be the inclusions of the fibers. 
Compatibility of the family obstruction theory with the fiber virtual classes gives
\begin{align*}
(\jmath_\infty)^{\C_u^*}_*[\overline\cM(X)]^{\vir,\C_u^*} &= [\overline\cM(\fW/\PP^1)]^{\vir,\C_u^*} \cap [\infty]^{\C_u^*},  \\
(\jmath_0)^{\C_u^*}_*[ \overline\cM(\fW_0,(g,k,\pi^!\beta))]^{\vir,\C_u^*} &= [\overline\cM(\fW/\PP^1)]^{\vir,\C_u^*} \cap [0]^{\C_u^*}.
\end{align*}
Using $[\infty]^{\C_u^*} - [0]^{\C_u^*} = u$ and \eqref{eq:equivariant degeneration formula}, we obtain
\begin{equation}
\label{eq:equivariant fiber relation}
\begin{split}
    &{\jmath_\infty}_*
    [\overline\cM(X)]^{\vir,\C_u^*}\\
    &\quad-
    \sum_{\eta\in\overline\Omega}\frac{\mathbf m(\eta)}{|\Eq(\eta)|}{(\jmath_0\circ\Phi_\eta)}_*
    \Delta^!
    \left([\overline\cM_{\Psi_1}(\wtil X,E) \times \overline\cM_{\Psi_2}(\what E,E)]^{\vir,\C_u^*}
    \right)\\
    &=
    u\,[\overline\cM(\fW/\PP^1)]^{\vir,\C_u^*}.
\end{split}
\end{equation}
Here it is important that $\overline\cM(\fW/\PP^1)$ parametrizes maps to expanded fibers and is endowed with its family perfect obstruction theory. Thus, the right-hand side of \eqref{eq:equivariant fiber relation} contains a single copy of the base weight $u$.\footnote{If one instead uses ordinary stable maps to the total space $W$, the moving normal theory contains $R\rho_*\cO_C\otimes T_{\PP^1}$, and this argument does not apply.}

We now compute the virtual dimension of a $\C_u^*$-fixed locus of $\overline\cM_{\Psi_2}(\what E,E)$. For a geometric point $f:C\to\what E[l]$ in the simple fixed locus, the source curve can be decomposed as follows:
\begin{equation*}
    C=C_Z\cup C_{\edg},
\end{equation*}
where $f$ restricts to $f_Z:C_Z\to Z$ and $f_{\edg}$ maps $C_{\edg}$ to torus-invariant lines in $\what E$ connecting $E$ and $\PP_Z(0\oplus1)\cong Z$. Note that $C_Z$ and $C_{\edg}$ are not necessarily connected. Let $h$ be the number of irreducible components of $C_{\edg}$, and let $\mathfrak e$ ($\geq h$) be the total degree of $C_{\edg}$. The total degree associated with $C_Z$ will be denoted by $\beta_*\in\rH_2(Z;\Z)$. Let $m$ be the total number of $*$-vertices (including unstable ones), $g_*$ be the total genus of the connected components of $C_Z$, and $k_*$ be the total number of markings assigned to $*$-vertices. Let $\Gamma$ be the graph associated to $f$, and $\overline\cM_\Gamma$ be the fixed locus containing $f$. Let $U_i$ be the number of univalent ($i=1$) and bivalent $(i=2)$ unstable $*$-vertices. Relative roots on $E$ are not counted as unstable vertices. Let
\begin{equation*}
    a_\Gamma^{\mathrm{Simp}}=\vcd\overline\cM_\Gamma\quad\text{and}\quad\delta_\Gamma=c_1(N_{Z/X})\cdot\beta_*.
\end{equation*}
Recall the definitions of simple and composite loci from \cite{GV05}. A composite fixed map can be decomposed into simple components and composite rubber parts.

\begin{lem}
\label{lem:vdim for degn}
The virtual codimension of a simple fixed locus $\overline\cM_\Gamma$ in $\overline\cM_{\Psi_2}(\what E,E)$ is given by
\begin{equation}
\label{eq:vdim for direct degn}
    a_\Gamma^{\mathrm{Simp}}=\delta_\Gamma+r(m-g_*)
    +h+r(\mathfrak e-h)-2U_1-U_2.
\end{equation}
Equivalently, if $S_*=m-U_1-U_2$ is the number of stable $*$-vertices, then
\begin{equation}
\label{eq:rearranged vdim for direct degn}
    a_\Gamma^{\mathrm{Simp}}=\delta_\Gamma+r(S_*-g_*)+(r-2)U_1+(r-1)U_2+h+r(\mathfrak e-h).
\end{equation}
For composite fixed loci, the full virtual codimension is $a_{\Gamma'}^{\mathrm{Simp}}+1$, where $\Gamma'$ is associated to its simple part. The additional $1$ corresponds to a target-smoothing factor (see \cite[Section 3.3]{GV05}).
\end{lem}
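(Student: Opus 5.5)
We compute $a_\Gamma^{\mathrm{Simp}}$ by splitting the virtual dimension of the fixed locus into vertex, edge, node and relative-marking contributions, exactly as in Lemma~\ref{lemm:vdim1}. We then subtract this from $\vd\overline\cM_{\Psi_2}(\what E,E)$. For a simple fixed locus the target is unexpanded, so the relative obstruction theory reduces to the log tangent bundle $T_{\what E}(-\log E)$. By the analogue of Lemma~\ref{lem:fixed locus gysin} (normalization sequence plus fixed parts), $[\overline\cM_\Gamma]^{\vir}$ is a refined Gysin pullback of a product of vertex virtual classes $[\overline\cM_{g_u,n(u)}(Z,\beta_u)]^{\vir}$ and edge factors $\overline\cM_e$. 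The edges have roots on $E$, and the Gysin pullback is taken along the diagonal of $Z$ at each $*$-end. Relative roots on $E$ impose no matching, since the edge ends are themselves the relative markings. There are no $0$-vertices: the whole curve lies in $\what E$, and simple loci do not sit in a rubber expansion.

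\emph{Step 1: the fixed locus.} Each $*$-vertex contributes $(\dim Z-3)(1-g_u)+\int_{\beta_u}c_1(Z)+\val(u)$, with the usual corrections $+2$ and $+1$ for univalent and bivalent unstable vertices, treated as in Lemma~\ref{lemm:vdim1}. Each edge contributes $\dim E=d-1$ before matching, namely a point of $E$ together with the covering data; the fixed edge deformations are computed as in the proof of Lemma~\ref{lem:fixed locus gysin}. Each edge–$*$ node subtracts $\dim Z=d-r$.

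\emph{Step 2: the ambient space.} $\vd\overline\cM_{\Psi_2}(\what E,E)=(d-3)(1-g)+\int c_1(T_{\what E}(-\log E))\cdot\beta'+k+q$. On the degree, $c_1(T_{\what E}(-\log E))$ restricts to $c_1(Z)+c_1(N_{Z/X})+r\cdot(\text{fiber weight})$ on $Z_*$, and pairs to $r$ with the fiber class $\ell$ after removing the contact with $E$. So $\beta'=\iota_*\beta_*+\mathfrak e\,\ell$ contributes $\int_{\beta_*}(c_1(Z)+c_1(N_{Z/X}))+r\mathfrak e$. The genus is obtained from the graph: $g=g_*+h-h_{\mathrm{conn}}$ type relations. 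I will need the Euler characteristic relation $\sum_u(1-g_u)$ versus the number of connected components of $C$; each connected component carries at least one root by admissibility. The roots number $q=h$, one per edge.

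\emph{Step 3: combine.} Taking the difference gives $\delta_\Gamma=c_1(N_{Z/X})\cdot\beta_*$ from the normal bundle term. The term $r\mathfrak e$ minus the edge contributions produces $h+r(\mathfrak e-h)$. The genus and vertex-count terms collapse to $r(m-g_*)$, where $(d-3)$ from $\what E$ versus $(d-r-3)$ from $Z$ yields $r$ per vertex minus $r$ per unit of genus. Finally $-2U_1-U_2$ comes from the unstable-vertex corrections. Substituting $m=S_*+U_1+U_2$ gives \eqref{eq:rearranged vdim for direct degn}. For composite loci, the fixed locus of the expanded target includes the rubber part, whose $\C^*$-scaling is absorbed. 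By \cite[Section 3.3]{GV05}, the target-smoothing line $L$ for the distinguished divisor is moving and adds one to the codimension. Since the rubber component itself contributes with the same codimension formula as a non-rigid target, the remaining codimension is that of the simple part $\Gamma'$.

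The main obstacle is the bookkeeping in Step 3. One must check that the genus of $C$ and the numbers of connected components interact correctly with $h$. Since $C$ may be disconnected and loops may occur in the graph, $h$ is not $m-1$ here. The cleanest route is to use $\sum_{\text{comp}}(1-g_{\text{comp}})=\sum_u(1-g_u)+(\#\text{edge components})-h$, where each edge is its own rational component attached at one node. Evaluating at a generic stable $*$-vertex sanity check, where the single edge has degree 1 and $r$ should result, will confirm the constants.
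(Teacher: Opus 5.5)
Your proposal is correct and follows essentially the paper's own argument. The paper likewise adds the $*$-vertex contribution $(m-g_*)(d-r-3)+\int_{\beta_*}c_1(Z)+k_*+h$, the edge term $h(d-1)$ and the $h(d-r)$ matching conditions. It compares the total with $\vd\overline\cM_{\Psi_2}(\what E,E)=(m-g_*)(d-3)+\int_{\beta_*}c_1(X)+r\mathfrak e+h+k_*$, and attributes the extra $1$ for composite loci to the moving target-smoothing line of \cite{GV05}.

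One simplification: every edge ends at a root on $E$, so the graph has no loops. Its connected components are exactly the $m$ stars centred at $*$-vertices, which gives $\sum_{\text{comp}}(1-g_{\text{comp}})=m-g_*$ at once. Also, your proposed sanity check with one stable genus-zero $*$-vertex and one degree-one edge should return $\delta_\Gamma+r+1$, not $r$.
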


\begin{proof}
Let $\mathbb E$ be the relative perfect obstruction theory of $\overline\cM_{\Psi_2}(\what E,E)$. We compute the virtual dimension of a simple fixed locus in $\overline\cM_{\Psi_2}(\what E,E)^{\C_u^*}$. We temporarily assume that all vertices are stable. Let $\fM_{g,k}'$ be the moduli of genus $g$ prestable curves (which are possibly disconnected) with $k$ markings. Its prescribed number of connected components will be clear from the context. The contribution of $C_Z$ is
\begin{equation*}
    \vd\fM_{g_*,k_*+h}'+\rk\mathbb E\vert_{C_Z}^{\fix}=(m-g_*)(d-r-3)+h+\int_{\beta_*}c_1(Z)+k_*.
\end{equation*}
The contribution of $C_{\edg}$ to the virtual dimension is $h(d-1)$. Each node connecting $C_Z$ and $C_{\edg}$ imposes a matching condition of codimension $\dim Z=d-r$. The node-smoothing parameters have nonzero $\C_u^*$-weights. Therefore,
\begin{equation*}
    \vd\overline\cM_\Gamma=(m-g_*)(d-r-3)+\int_{\beta_*}c_1(X)
    -\delta_\Gamma+hr+k_*+2U_1+U_2.
\end{equation*}
On the other hand, denoting the corresponding simple graph by $\Psi_2^{\mathrm{Simp}}$, the virtual dimension of the corresponding relative moduli space is
\begin{equation*}
    \vd\overline\cM_{\Psi_2^{\mathrm{Simp}}}(\what E,E)=(m-g_*)(d-3)+\int_{\beta_*}c_1(X)+r\mathfrak e+h+k_*.
\end{equation*}
Comparing the two virtual dimensions proves \eqref{eq:vdim for direct degn}. Substituting $m=S_*+U_1+U_2$ gives \eqref{eq:rearranged vdim for direct degn}. The statement for composite loci follows from the additional target smoothing factor.
\end{proof}

After applying $\wtil p_*$, we divide \eqref{eq:equivariant fiber relation} by $u$ and take the $u^{-1}$-coefficient. Since $\C_u^*$ acts trivially on $\overline\cM(X)$, we have $A_*^{\C_u^*}(\overline\cM(X))=A_*(\overline\cM(X))[u]$. In particular, the class $\wtil p_*[\overline\cM(\fW/\PP^1)]^{\vir,\C_u^*}$ has zero $u^{-1}$-coefficient. Consequently,
\begin{multline}
\label{eq:residue of equivariant degeneration}
    [\overline\cM(X)]^{\vir}=\\
    [u^0]\sum_{\eta\in\overline\Omega}
    \frac{\mathbf m(\eta)}{|\Eq(\eta)|}
    {(\wtil p\circ\jmath_0\circ\Phi_\eta)}_*
    \Delta^!\left([\overline\cM_{\Psi_1}(\wtil X,E) \times \overline\cM_{\Psi_2}(\what E,E)]^{\vir,\C_u^*}
    \right).
\end{multline}
Here and below, $[u^0]$ is the $u^0$-coefficient taken after expanding the inverse equivariant Euler classes in Laurent series in $u^{-1}$. Using \eqref{eq:residue of equivariant degeneration}, we obtain absolute--relative correspondence in genus at most one.

\begin{thm}
\label{thm: main Abs-Rel corr}
Assume $g\leq1$ and that $Z$ is a positive blow-up center, in the sense of Definition~\ref{defn:positivity} when $g=0$ and Definition~\ref{defn:positivity for g=1} when $g=1$. There are three cases.
\begin{itemize}
    \item [(1)] The genus is zero, $g=0$;
    \item [(2)] The genus is one, $g=1$, and $Z$ is not supportive with respect to $\beta$;
    \item [(3)] The genus is one, $g=1$, and $Z$ is supportive with respect to $\beta$.
\end{itemize}
In cases (1) and (2), we have
    \begin{equation*}
        (\pi\circ s)_*[\overline\cM_{\Psi^\circ}(\wtil X,E)]^{\vir}=[\overline\cM(X)]^{\vir}.
    \end{equation*}
In case (3), we have
 \begin{equation*}
        (\pi\circ s)_*[\overline\cM_{\Psi^\circ}(\wtil X,E)]^{\vir}=[\overline\cM(X)]^{\vir}-\sum_{\beta_Z\in B_Z(\beta)}
        \iota_*[\overline\cM_{1,k}(Z,\beta_Z)]^{\vir}.
    \end{equation*}
These are equalities in $A_*(\overline\cM(X))$, where $\iota:Z\xhookrightarrow{}X$ is the inclusion.
\end{thm}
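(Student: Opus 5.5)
Starting from the residue identity \eqref{eq:residue of equivariant degeneration}, the plan is to localize each summand on the $\what E$ side with respect to $\C_u^*$ and show that only one term survives. That term is the one where $\Psi_2$ is empty, i.e.\ $q=0$ and $\Psi_1=\Psi^\circ$. It contributes exactly $(\pi\circ s)_*[\overline\cM_{\Psi^\circ}(\wtil X,E)]^{\vir}$, with no $u$-dependence, because $\C_u^*$ acts trivially on $\wtil X$. In case (3) there is an additional contribution from fixed loci supported entirely on $Z_*$.

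First, for a triple $\eta=(\Psi_1,\Psi_2,I)$ with $\Psi_2\neq\emptyset$, I would apply Graber--Pandharipande virtual localization, in the relative form of \cite{GV05}, to $[\overline\cM_{\Psi_2}(\what E,E)]^{\vir,\C_u^*}$. This expresses it as a sum over simple and composite fixed loci $\overline\cM_\Gamma$, each weighted by $1/e(N^{\vir})$, which is of order $u^{-a}$ with $a$ the virtual codimension. Since $\wtil p_*$ lands in $A_*(\overline\cM(X))[u]$ after gluing with the $\wtil X$ factor (which is $u$-independent), only loci with $a\le 0$ can contribute to $[u^0]$. In other words, we need $a^{\mathrm{Simp}}_\Gamma\le 0$, or $a^{\mathrm{Simp}}_{\Gamma'}\le -1$ for composite loci.

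Second, I would use \eqref{eq:rearranged vdim for direct degn}. Here $\delta_\Gamma\ge0$ by positivity, $r\ge2$, $h\ge0$ and $\mathfrak e\ge h$. When $g\le1$ we also have $g_*\le 1$, and a stable $*$-vertex of genus one requires $S_*\ge1$, so $S_*-g_*\ge0$. Hence $a^{\mathrm{Simp}}_\Gamma\ge0$ always, which rules out composite loci entirely. Equality forces $\delta_\Gamma=0$, $h=0$ (no edges), $U_1=U_2=0$ and $S_*=g_*$.

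If $h=0$ while roots are present, the map must touch $E$, and that is impossible without edges. So $\Psi_2$ must be rootless, and admissibility forces $\eta$ to be a single vertex of genus $g$ and degree $\pi^!\beta$ lying entirely in $\what E$. For $g=0$ we need $S_*=0$, so no vertex exists and $\Psi_2$ is empty; the only surviving term is $\Psi_1=\Psi^\circ$, giving case (1). For $g=1$ we get a single stable genus-one vertex mapping into $Z_*$ with class $\beta_Z$ satisfying $\iota_*\beta_Z=\beta$ and $c_1(N_{Z/X})\cdot\beta_Z=0$. Such a $\beta_Z$ exists precisely when $\beta_Z\in B_Z(\beta)$, so this locus is absent in case (2) and is the extra term in case (3). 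Here I still need to check that the whole domain lies over $Z_*$ rather than over $E$. A locus over $E$ would instead be a rubber (composite) locus, and those have already been excluded.

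Third, I would compute the extra contribution in case (3). The fixed locus is $\overline\cM_{1,k}(Z,\beta_Z)$, its moving normal bundle is $R\rho_*f^*N_{Z_*/\what E}$ with $N\cong N_{Z/X}$ of weight $-u$ (or $u$), and its rank is $r(1-1)+c_1\cdot\beta_Z=0$. The $u^0$ term of $1/e$ is then $\pm1$ times the virtual class pushed forward by $\iota$. The sign comes out $-1$: the $(-u)^{\text{rank}}$ conventions together with the structure of $R^0-R^1$ give $H^1$ rank $r$ from the Hodge piece, and these must cancel against the $H^0$ part. This sign check is the main obstacle. I would verify it against Remark~\ref{rmk:counterex for g=1}, where the correction is $+[\overline\cM_{1,1}\times\pt]$ for the non-relative version and must be reconciled with the relative–absolute difference. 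I would also justify the degeneration-formula multiplicities $\mathbf m(\eta)/|\Eq(\eta)|=1$ for rootless $\eta$. Combining these with \eqref{eq:residue of equivariant degeneration} yields the stated identities.
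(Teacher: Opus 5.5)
Your Steps 1 and 2 coincide with the paper's proof. You localize the $\what E$-side of \eqref{eq:residue of equivariant degeneration} and use \eqref{eq:rearranged vdim for direct degn} with $\delta_\Gamma\ge 0$ and $S_*-g_*\ge 0$ to get $a^{\mathrm{Simp}}_\Gamma\ge h$. So composite loci, whose codimension is one higher, and all loci with edges drop out. What remains is the main term and, for $g=1$, the loci $\overline\cM_{1,k}(Z,\beta_Z)$ with $\beta_Z\in B_Z(\beta)$. Two minor points: for $r=2$ the vanishing of $U_1$ comes from $h=0$, not from the term $(r-2)U_1$. Also, a composite locus with empty simple part needs the convention $a^{\mathrm{Simp}}_\emptyset=0$, which the paper states explicitly.

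The gap is in Step 3, the step you call the main obstacle, and your resolution of it is wrong. There is in fact no sign to chase. The moving part $N^{\vir}=R\rho_*f^*N_{Z/X}\otimes\mathbf{t}^{\epsilon}$ has rank $0$, so for either weight $\epsilon=\pm1$ one has $e^{\C_u^*}(N^{\vir})=\sum_{i\ge 0}c_i(N^{\vir})(\epsilon u)^{-i}$. Hence $1/e^{\C_u^*}(N^{\vir})=1+O(u^{-1})$, however the rank splits between $H^0$ and $H^1$, so the ``Hodge piece'' bookkeeping is irrelevant.

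These loci therefore contribute $+\iota_*[\overline\cM_{1,k}(Z,\beta_Z)]^{\vir}$ to the right-hand side of \eqref{eq:residue of equivariant degeneration}, on the same side as the main term. The minus sign in case (3) comes only from moving them to the left. If the $u^0$-coefficient really were $-1$, as you assert, \eqref{eq:residue of equivariant degeneration} would give $(\pi\circ s)_*[\overline\cM_{\Psi^\circ}(\wtil X,E)]^{\vir}=[\overline\cM(X)]^{\vir}+\sum\iota_*[\overline\cM_{1,k}(Z,\beta_Z)]^{\vir}$, which is the opposite of the statement.

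The consistency check you propose is also the wrong one. Remark~\ref{rmk:counterex for g=1} concerns the absolute moduli space of $\wtil X$, whose discrepancy with the relative theory comes from the second degeneration in \eqref{eq:degn scheme}. The relevant check is Remark~\ref{rmk:counterex for cycle}. It confirms coefficient $-1$ in the final identity, which means $+1$ in the residue identity.
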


\begin{proof}
We first claim that every term in \eqref{eq:residue of equivariant degeneration} with a nonempty $\what E$-side not entirely contained in $\PP_Z(0\oplus1)\cong Z$ vanishes. Equip each summand
\begin{equation*}
    \overline\cM_{\Psi_1}(\wtil X,E)
    \times_{E^q}
    \overline\cM_{\Psi_2}(\what E,E)
\end{equation*}
with the action that is trivial on $\overline\cM_{\Psi_1}(\wtil X,E)$. Therefore, its fixed loci correspond to the fixed loci of $\overline\cM_{\Psi_2}(\what E,E)$.

Since $g\leq1$, we have $S_*-g_*\geq0$. With the positivity assumption, this gives $\delta_\Gamma\geq0$: decompose $C_Z$ into its nonconstant irreducible components and apply positivity to their normalizations, while the contracted components contribute zero. 
For a composite fixed locus $M_\Gamma$, let $\Gamma'$ be the graph corresponding to its simple part. Note that $\Gamma'$ can be empty when the image of the map lies entirely on the rubber part. In this case, we may assume that $a_\emptyset^{\mathrm{Simp}} = 0$, so we have $a_{\Gamma} = a_{\emptyset}^{\mathrm{Simp}} + 1  = 1 > 0.$
Now we assume that the simple part $\Gamma'$ is not empty.
Moreover, if the image is not contained in $\PP_Z(0\oplus1)\cong Z$, then $h>0$. Hence, equation \eqref{eq:rearranged vdim for direct degn} gives
\begin{equation*}
    a_\Gamma^{\mathrm{Simp}}\geq h>0.
\end{equation*}
None of these fixed loci contributes to the $u^0$-coefficient of \eqref{eq:residue of equivariant degeneration}. It remains to consider maps whose image is entirely contained in $\PP_Z(0\oplus1)\cong Z$. Write $\beta_Z=\beta_*$ for the degree. In this case, Lemma~\ref{lem:vdim for degn} gives
\begin{equation*}
    a_\Gamma^{\mathrm{Simp}}=\delta_\Gamma+r(1-g).
\end{equation*}
If $g=0$, then $a_\Gamma^{\mathrm{Simp}}=\delta_\Gamma+r>0$, so this contribution vanishes. If $g=1$, then it vanishes unless $ \delta_\Gamma=c_1(N_{Z/X})\cdot\beta_Z=0$. Therefore, these fixed loci contribute exactly
\begin{equation*}
    \iota_*[\overline\cM_{1,k}(Z,\beta_Z)]^{\vir}
\end{equation*}
for $\beta_Z\in B_Z(\beta)$. Since the summand with empty $\what E$-side is canonically identified with $[\overline\cM_{\Psi^\circ}(\wtil X,E)]^{\vir}$, substituting the surviving terms into \eqref{eq:residue of equivariant degeneration} proves the theorem.
\end{proof}

\begin{rmk}
\label{rmk:counterex for cycle}
    The correction terms of Theorem~\ref{thm: main Abs-Rel corr} are necessary. Recall $X$, $Z$, and $\beta$ from Remark~\ref{rmk:counterex for case ii}. Set $g=1$, and let $E'$ be the exceptional divisor of $\wtil S$. Then, $\pi_*c_2(T_{\wtil S}(-\log E'))=c_2(T_S)-[\pt]$ yields
    \begin{equation*}
        (\pi\circ s)_*[\overline\cM_{\Psi^\circ}(\wtil X, E)]^{\vir}=[\overline\cM(X)]^{\vir}-[\pt]
    \end{equation*}
    in $A_0(S)$, under the identification $\overline\cM_{1,0}(X,\beta)\cong S$.
    This shows that the naive virtual cycle identity of the absolute--relative correspondence fails in this example. It is compatible with the theorem since $Z$ is supportive.
\end{rmk}

We next extend the definition of positivity to any genus $g$.
\begin{defn}
\label{defn:positivity for all g}
We say that the blow-up center $Z \subset X$ is $g$-positive if it satisfies 
\begin{equation}
\label{eq:higher genus positivity}
    c_1(N_{Z/X})\cdot f_*[C]>(g-1)r
\end{equation}
for any regular map $f : C\to Z$ with $g(C)\leq g$ and $f_*[C]\neq 0$.
\end{defn}
Under this positivity condition, the higer-genus statement is as follows.

\begin{thm}
\label{thm: all genus Abs-Rel corr}
Fix a genus $g\geq2$. Assume $\dim Z\geq4$ and that $Z$ is a $g$-positive blow-up center. Then,
\begin{equation*}
    (\pi\circ s)_*[\overline\cM_{\Psi^\circ}(\wtil X,E)]^{\vir}=[\overline\cM_{g,k}(X,\beta)]^{\vir}
\end{equation*}
in $A_*(\overline\cM_{g,k}(X,\beta))$.
\end{thm}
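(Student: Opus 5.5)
We follow the proof of Theorem~\ref{thm: main Abs-Rel corr} and start from the residue identity \eqref{eq:residue of equivariant degeneration}. That identity holds in every genus, since its derivation used only the equivariant degeneration formula \eqref{eq:equivariant degeneration formula} and the fact that $\C_u^*$ acts trivially on $\overline\cM(X)$.

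The summand with empty $\what E$-side is canonically $[\overline\cM_{\Psi^\circ}(\wtil X,E)]^{\vir}$, and it enters with coefficient $1$. So it suffices to show that every $\C_u^*$-fixed locus $\overline\cM_\Gamma$ of $\overline\cM_{\Psi_2}(\what E,E)$ with nonempty $\what E$-side has positive virtual codimension $a_\Gamma$. Such a locus contributes only terms of order $u^{-a_\Gamma}$ (times polynomials in $u$ coming from the trivial side), and a dimension count shows its $u^0$-coefficient must vanish. Composite loci add $+1$ to the codimension of their simple part. A purely rubber composite locus already has codimension $1$. Hence it is enough to prove $a^{\mathrm{Simp}}_\Gamma\geq 0$ for simple parts, with strict inequality when the locus is simple.

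The key step is to bound \eqref{eq:rearranged vdim for direct degn} from below:
\begin{equation*}
a^{\mathrm{Simp}}_\Gamma=\delta_\Gamma+r(S_*-g_*)+(r-2)U_1+(r-1)U_2+h+r(\mathfrak e-h).
\end{equation*}
Only the term $r(S_*-g_*)$ can be negative. We organize the count by stable connected components of $C_Z$. For a stable $*$-vertex $u$ of genus $g_u\leq g$ and degree $\beta_u$, we want $c_1(N_{Z/X})\cdot\beta_u + r(1-g_u)>0$ whenever this quantity is relevant.

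If $\beta_u\neq 0$, decompose the image into irreducible components, whose normalizations have genus $\leq g_u$. We then apply $g$-positivity \eqref{eq:higher genus positivity} to the components and distribute the genus appropriately. The aim is to show $\delta_u>(g_u-1)r$ (positivity in Definition~\ref{defn:positivity for all g} applied to the total curve), which gives $\delta_u+r(1-g_u)>0$. Making this precise when the genus is spread among several image components will need the fact that each nonconstant component satisfies the strict bound and that $g\geq 2$.

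If $\beta_u=0$ and $g_u\geq 1$, the vertex is a contracted higher-genus component, and $\delta_u=0$, so its term is $r(1-g_u)<0$. This is the main obstacle. Here I would use the dimension hypothesis $\dim Z\geq 4$ together with the known structure of the virtual class of degree-zero loci. The fixed part of the obstruction theory on such a vertex contains $\mathbb E^\vee_{\mathrm{Hdg}}\boxtimes T_Z$, of rank $g_u\dim Z$, while the vertex moduli $\overline\cM_{g_u,n}\times Z$ has dimension $3g_u-3+n+\dim Z$. When $\dim Z\geq 4$, one has $g_u\dim Z>3g_u-3+n+\dim Z$ for the relevant valence $n$. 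The vertex class is then $c_{\mathrm{top}}$ of an obstruction bundle exceeding the dimension, so it vanishes, or at least is killed after pushforward. Concretely, I would show that $\lambda_{g_u}$-type factors ($c_{g_u}(\mathbb E^\vee\otimes T_Z)$ contains $\lambda_{g_u}^{\dim Z}$-type terms) vanish whenever $\dim Z>3$, using $\lambda_{g}^2=0$. The contracted high-genus vertices would therefore contribute zero outright. Example-type behaviour with $\dim Z\leq 3$ is where the counterexample of Section~\ref{sec:counterex of Hu's conj} lives, which supports this being the correct place to use the hypothesis.

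With these vertices excluded, every remaining term in the displayed formula is nonnegative. Strict positivity follows from $h>0$ whenever the image leaves $\PP_Z(0\oplus 1)$. When the whole map lies in $Z$, we have $h=0$ and $S_*=1$, and then $\delta_\Gamma+r(1-g)>0$ by $g$-positivity, because $\beta\neq0$ is forced, the $\beta=0$ case being handled by the Hodge vanishing above. This kills all corrections and gives the theorem.
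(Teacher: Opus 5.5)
Your overall structure matches the paper's. You start from \eqref{eq:residue of equivariant degeneration} and use \eqref{eq:rearranged vdim for direct degn} to show that every fixed locus with nonempty $\what E$-side has positive virtual codimension. You also correctly isolate degree-zero stable $*$-vertices of genus $\geq 2$ as the only place where $\dim Z\geq 4$ enters.

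A small point on the case $\beta_u\neq 0$: no ``distribution of genus'' is needed. The bound in \eqref{eq:higher genus positivity} is $(g-1)r$ with the \emph{fixed} $g$, so a single nonconstant component already gives $\delta_u>(g-1)r\geq (g_u-1)r$. The paper argues globally instead, using $g_*\leq g$ and $S_*\geq 1$.

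The genuine gap is in the vanishing of the degree-zero genus-$g_u\geq 2$ vertex class, which is the heart of the theorem.

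\emph{The dimension count fails.} Your inequality $g_u\dim Z>3g_u-3+n+\dim Z$ is equivalent to $n<(g_u-1)(\dim Z-3)$, and the valence $n$ is unbounded. Already for $g_u=2$ and $\dim Z=4$ it fails as soon as the vertex carries one flag, which it always does when attached to an edge: then $8\not>8$. So $c_{g_u\dim Z}(\mathbb E^\vee\boxtimes T_Z)$ does not in general exceed the dimension of $\overline\cM_{g_u,n}\times Z$.

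\emph{The fallback via $\lambda_{g}^2=0$ does not repair this.} In the expansion of the top Chern class, Mumford's relation only kills the pure term $\lambda_{g_u}^{\dim Z}$. The mixed terms, products of Chern classes of $T_Z$ with polynomials in $\lambda_1,\ldots,\lambda_{g_u}$, are untouched by it.

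\emph{The missing idea.} For $g_u\geq 2$, the Hodge bundle on $\overline\cM_{g_u,n}$ is the pullback of the Hodge bundle on $\overline\cM_{g_u}$ along the forgetful map $q_n$, by compatibility with stabilization. Hence
\[
[\overline\cM_{g_u,n}(Z,0)]^{\vir}=q_n^*[\overline\cM_{g_u,0}(Z,0)]^{\vir}.
\]
The right-hand side vanishes because $\vd\overline\cM_{g_u,0}(Z,0)=(1-g_u)(\dim Z-3)<0$. Equivalently, it is a class of degree $g_u\dim Z$ on a space of dimension $3g_u-3+\dim Z$. This gives vanishing for every valence $n$, and this is precisely where $\dim Z\geq 4$ is used.

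Since the fixed-locus virtual class is obtained from the product of vertex classes by refined Gysin pullback, the whole locus then contributes zero. With this substitution, the rest of your argument goes through as in the paper.
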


\begin{proof}
We again show that the contribution of every fixed locus with a nonempty $\what E$-side vanishes. First, assume $\beta_*\neq0$. Then assumption \eqref{eq:higher genus positivity} gives
\begin{equation*}
    \delta_\Gamma+r(S_*-g_*)>0.
\end{equation*}
Since $r\geq2$, equation \eqref{eq:rearranged vdim for direct degn}  shows $a_\Gamma^{\mathrm{Simp}}>0$, including the locus whose image is entirely contained in $Z$. Hence, all nonzero-degree $*$-loci have zero $u^0$-contribution.

Now suppose $\beta_*=0$. Since $\beta_*$ is a sum of effective vertex degrees, every $*$-vertex has degree zero. If every stable $*$-vertex has genus zero or one, then $S_*-g_*\geq0$. Since any such simple locus with nonempty $\what E$-side has $h>0$, \eqref{eq:rearranged vdim for direct degn} again gives $a_\Gamma^{\mathrm{Simp}}\geq h>0$. This shows the required vanishing for such correction terms.

Otherwise, there is a degree-zero stable $*$-vertex of genus $s\geq2$. Its vertex factor has virtual cycle
\begin{equation*}
    [\overline\cM_{s,\mathbf k}(Z,0)]^{\vir}=c_{s\cdot \dim Z}
    \left(\mathbb E_{\mathrm{Hdg},\mathbf k}^\vee\boxtimes T_Z
    \right)\cap [\overline\cM_{s,\mathbf k}\times Z]
\end{equation*}
for some non-negative integer $\mathbf k$. The subscript of the Hodge bundle indicates the curve moduli base. Let $q_\mathbf k:\overline\cM_{s,\mathbf k}\times Z\to\overline\cM_s\times Z$ be the morphism forgetting all markings and flags. By compatibility of the Hodge bundle with stabilization, $\mathbb E_{\mathrm{Hdg},\mathbf k}=q_\mathbf k^*\mathbb E_{\mathrm{Hdg},0}$. Pullbacks to products with $Z$ are suppressed. Consequently, 
\begin{equation*}
    [\overline\cM_{s,n}(Z,0)]^{\vir}=q_n^*
    [\overline\cM_{s,0}(Z,0)]^{\vir}.
\end{equation*}
Now  $\vd\overline\cM_{s,0}(Z,0)=(1-s)(\dim Z-3)<0$ shows
\begin{equation*}
    [\overline\cM_{s,n}(Z,0)]^{\vir}=0
\end{equation*}
for every $n$. Since a zero vertex class forces the virtual class of a fixed graph to vanish, the only contribution surviving in \eqref{eq:residue of equivariant degeneration} is the main term with empty $\what E$-side. This proves the theorem.
\end{proof}

\begin{rmk}
The strict inequality in \eqref{eq:higher genus positivity} is needed in general for 
\begin{equation*}
    (\pi\circ s)_*[\overline\cM_{\Psi^\circ}(\wtil X,E)]^{\vir}=[\overline\cM_{g,k}(X,\beta)]^{\vir}
\end{equation*}
to hold. If the strict inequality in \eqref{eq:higher genus positivity} is replaced by a weak inequality, the locus associated with a nonzero class $\beta_Z$ satisfying
\begin{equation*}
    \iota_*\beta_Z=\beta,\qquad
    c_1(N_{Z/X})\cdot\beta_Z=(g-1)r
\end{equation*}
has virtual codimension zero and contributes $\iota_*[\overline\cM_{g,k}(Z,\beta_Z)]^{\vir}$. Analogously, define the set of supportive effective classes
\begin{equation*}
        B_Z^g(\beta)=\{\text{effective }\beta_Z\in\rH_2(Z;\Z)\mid \iota_*\beta_Z=\beta\text{ and }c_1(N_{Z/X})\cdot \beta_Z=(g-1)r\}
    \end{equation*}
as in Definition~\ref{defn:supportive}. Then, the second statement of Theorem~\ref{thm: main Abs-Rel corr} carries over as follows:
 \begin{equation*}
        (\pi\circ s)_*[\overline\cM_{\Psi^\circ}(\wtil X,E)]^{\vir}=[\overline\cM(X)]^{\vir}-\sum_{\beta_Z\in B_Z^g(\beta)}
        \iota_*[\overline\cM_{g,k}(Z,\beta_Z)]^{\vir},
 \end{equation*}
assuming $\dim Z\geq 4$ and $c_1(N_{Z/X})\cdot f_*[C]\geq(g-1)r$ for any regular map $f : C\to Z$ with $g(C)\leq g$ and $f_*[C]\neq 0$.
\end{rmk}

Together, Theorems~\ref{thm: main Abs-Rel corr} and \ref{thm: all genus Abs-Rel corr} give the stated absolute--relative correspondence in every genus. A counterexample to the corresponding statement without the dimension assumption when $g\geq2$ will be given in the next section.

\subsection{Counterexample to Conjecture \ref{conj:Hu}}
\label{sec:counterex of Hu's conj}
We apply the degeneration formula of Li (Theorem~\ref{thm: degeneration formula}). We give a counterexample to Hu's point-blow-up conjecture \cite{Hu00} in genus $g\geq 2$. Here we provide a counterexample to this conjecture.
    Set $g=2$ and  $X=\PP^5$, and let $\beta$ be the line class of $X$. Let $\wtil X$ denote the point blow-up of $X$, with exceptional divisor $E\cong \PP^4$. Consider stable maps with a single absolute marking. Let $\Psi_1$ be an admissible graph for relative stable maps to $(\wtil X, E)$ with $n_{\wtil X}$ vertices with total genus $g_{\wtil X}$, a single leg, and $q$ roots. Let $\Psi_2$ be an admissible graph for relative stable maps to $(\what E, E)$ with no legs, where $n_{\what E}, g_{\what E}$, and $q$ are defined analogously.  We will use the notation $$\langle \gamma_1,\cdots,\gamma_k\mid \zeta_1,\cdots, \zeta_q\rangle^{Y, D}_{\Psi}$$ to denote the relative GW invariant of $(Y,D)$ with absolute insertions $\gamma_i$ and relative insertions $\zeta_j$. Let $\boldsymbol{\mu}$ be the sum of multiplicities at the $q$ roots.
    Then, a direct computation shows
\begin{equation*}
    \vd M_{\Psi_1}(\wtil X,E)=2(n_{\wtil X}-g_{\wtil X})+q -5\boldsymbol{\mu} +7,\quad\vd M_{\Psi_2}(\what E, E)=2(n_{\what E}-g_{\what E})+q + 5\boldsymbol{\mu},
\end{equation*}
and $\vd M(X)= 5$. Consider the degeneration $X\rightsquigarrow\wtil X\cup_E \what E$. Since $\pi^!\beta-2\cdot \ell$ is not effective, there are only two possible contributions in the degeneration formula for $\langle \pt\rangle^{X}_{2,\beta}$. The first main contribution is $\langle \pt\mid\text{ }\rangle^{\wtil X, E}_{\Psi^\circ}$. The second arises from $n_{\wtil X}=n_{\what E}=q=1$. For dimensional reasons, we get $g_{\wtil X}=0$ and $g_{\what E}=2$. Hence, the correction term is 
\begin{equation*}
    \langle \pt\mid 1\rangle^{\wtil X, E}_{\Psi_1}\cdot \langle \text{ }\mid\pt\rangle^{\what E, E}_{\Psi_2}.
\end{equation*}
The degree of $\Psi_1$ is the class of the proper transform of a line in $X$, and the degree of $\Psi_2$ is $\ell$. Hence, $\boldsymbol{\mu} = \ell \cap E = 1$. Relative localization (see \cite{GV05}) gives
\begin{equation*}
\langle \pt\mid 1\rangle^{\wtil X, E}_{\Psi_1}=1\quad\text{and}\quad
    \langle \text{ }\mid\pt\rangle^{\what E, E}_{\Psi_2}=\int_{\overline\cM_{2,1}}\frac{\{\Lambda_2^\vee(-1)\}^5}{1+\psi_1}=\frac{1}{144}
\end{equation*}
using Hodge integrals \cite{FP00}. Here $\Lambda_g^\vee(t)$ denotes $t^g-\lambda_1t^{g-1}+\cdots +(-1)^g\lambda_g$. Next, consider the degeneration $\wtil X\rightsquigarrow\wtil X\cup_E \PP_E(\cO_E(-1)\oplus 1)$. Again, for dimensional reasons, there is a single correction term 
\begin{equation*}
    \langle \pt\mid 1\rangle^{\wtil X, E}_{\Psi_1}\cdot \langle \text{ }\mid\pt\rangle^{\PP_E(\cO_E(-1)\oplus 1), E}_{\Psi_2}.
\end{equation*}
The degree of $\Psi_1$ is the class of the proper transform of a line in $X$, and the degree of $\Psi_2$ is the line class of $\PP_E(\cO_E(-1)\oplus 0)\cong \PP^4$.
By localization, one computes
\begin{equation}
\label{eq:second correction}
    \langle \text{ }\mid\pt\rangle^{\PP_E(\cO_E(-1)\oplus 1), E}_{\Psi_2}=\frac{1}{128}.
\end{equation}
The calculation is given in Appendix B. Combining the two degenerations shows
\begin{equation*}
    \langle \pt\rangle^{\wtil X}_{2,\pi^!\beta}=\langle \pt\rangle^{X}_{2,\beta}+\frac{1}{1152}.
\end{equation*}
This disproves Conjecture~\ref{conj:Hu}.

\appendix
\section{Numerical blow-up formula in special cases} \label{sec:Appblowup1}
In this appendix, we give an algebro-geometric proof of cases (I), (II), and (IV) of Theorem~\ref{thm:Hublowup} \cite{Hu00, Hu01}. The proof replaces symplectic cutting techniques with degeneration formulas and Fredholm-index calculations with virtual dimension arguments. We retain the notations and conventions from Section~\ref{sec:higher genus} and impose the following assumptions.
\begin{itemize}
    \item [(i)] We assume one of the following conditions: 
        \begin{itemize}
            \item[(a)] $d=\dim(X) \le 3$
            \item[(b)] $g=2$, $d = 4$, and $\dim(Z)=0$
            \item[(c)] $g=1$, $d \ge 4$, and $\dim(Z) \le 1$
            \item[(d)] $g=0$, $d \ge 4$, and $\dim(Z) \le 2$
        \end{itemize}
        \medskip
        For cases (a) and (c), we further assume $k\ge 1$ or there is no regular map from a curve to $Z$ with degree $\beta$.
    \item [(ii)] Assume $\gamma_1|_Z = \cdots = \gamma_k|_Z = 0$. In other words, all insertion classes $\gamma_i$ restrict trivially to the blow-up locus $Z$.
    \item [(iii)] $c_1(X) \cdot f_*[C] \geq0$ for any regular map $f : C\to Z$ from a curve $C$.
\end{itemize}
For example, the third assumption holds if $Z$ is a point or if $X$ is Fano or Calabi-Yau. The following theorem covers the algebro-geometric analogues considered in Theorem~\ref{thm:Hublowup}. 
 \begin{thm}
 \label{thm: numerical blowup}
     Under the above assumptions, the numerical blow-up formula holds:
     \begin{equation*}
         \langle\pi^*\gamma_1,\cdots,\pi^*\gamma_k\rangle_{g,\pi^!\beta}^{\wtil X}=\langle\gamma_1,\cdots,\gamma_k\rangle_{g,\beta}^{X}
     \end{equation*}
     for $\gamma_i\in \rH^*(X; \Z)$.  
 \end{thm}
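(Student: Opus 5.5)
The plan is to follow the two-step degeneration scheme \eqref{eq:degn scheme}, using dimension counts to kill every correction term, and to capture the insertions $\gamma_i$ through assumption (ii).

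\emph{First degeneration.} Degenerate $X\rightsquigarrow \wtil X\cup_E \what E$ and apply Li's formula (Theorem~\ref{thm: degeneration formula}) to $\langle\gamma_1,\dots,\gamma_k\rangle^X_{g,\beta}$. The classes $\gamma_i$ lift to the total space $W$ as $p^*\gamma_i$, which restrict to $\pi^*\gamma_i$ on $\wtil X$. On $\what E$ they restrict to the pullback of $\gamma_i|_Z=0$. Hence, by (ii), any splitting that puts an absolute marking on the $\what E$-side vanishes. The remaining terms have the form $\langle \pi^*\gamma\mid \zeta\rangle^{\wtil X,E}_{\Psi_1}\cdot\langle\ \mid \zeta^\vee\rangle^{\what E,E}_{\Psi_2}$, where $\Psi_2$ has no legs, summed over a basis of $H^*(E^q)$. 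The main term, with empty $\what E$-side, is $\langle\pi^*\gamma\rangle^{\wtil X,E}_{\Psi^\circ}$.

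\emph{Second degeneration.} Degenerate $\wtil X\rightsquigarrow \wtil X\cup_E \PP_E(\cO_E(-1)\oplus 1)$. In the same way we get $\langle\pi^*\gamma\rangle^{\wtil X}_{g,\pi^!\beta}$ as $\langle\pi^*\gamma\rangle^{\wtil X,E}_{\Psi^\circ}$ plus analogous correction terms. Here $\pi^*\gamma_i$ restricts to $E$ through $Z$, so it is again zero there, and all legs stay on the $\wtil X$-side.

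\emph{Dimension counts.} It then suffices to show that every correction term with nonempty second side vanishes in both degenerations. For a leg-free relative moduli space on the $\what E$ or $\PP_E(\cO(-1)\oplus1)$ side with components $(g_j,\beta_j)$, the virtual dimension is $\sum_j\bigl((d-3)(1-g_j)+c_1\cdot\beta_j\bigr)$ minus $\sum(\mu_i-1)$, where $c_1$ means $c_1(T(-\log E))$. The relative insertion $\zeta^\vee$ lives on $E^q$ and pulls back from $Z$-data after pushing to $Z^q$, so its degree is at most $q\dim Z$ plus fibre contributions.

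The $c_1$ terms are computed from $c_1(\what E)=\pi^*(c_1(X))+$ a fibre term, and assumption (iii) gives $c_1(X)\cdot\beta_*\ge0$ on curves mapped into $Z$. With this, I would compare the dimension of each side against the degree it can absorb. Matching on $\wtil X$ then forces $\vd$ of the $\wtil X$-side minus $\deg\pi^*\gamma$ to equal the codimension of $\zeta$. I expect that in each case (a)--(d) the genus and dimension bounds ($(d-3)(1-g_j)$ versus $\dim Z$) make the required degree of $\zeta^\vee$ exceed $\dim E^q$ or be negative, so the term vanishes. Where the count is borderline, I would instead use fibrewise $\C^*$-localization on the bubble with a degree-zero genus $\ge2$ vertex, as in Theorem~\ref{thm: all genus Abs-Rel corr}. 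In case (b), the relevant vertex virtual class $[\overline\cM_{s,n}(Z,0)]^{\vir}$ vanishes for $Z=\pt$, $s\geq 2$ only through its Hodge part.

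\emph{Main obstacle.} The main obstacle is organizing this case-by-case dimension count, especially the low-dimensional cases (a) and (c). There, degree-zero contributions supported on $Z$ with no markings can survive, namely the $\beta_*\ne0$ loci entirely inside $Z$. This is exactly why those cases need $k\ge1$ or the absence of curves in $Z$ of class $\beta$. The marking forces the $\wtil X$-side to be nonempty, so a purely-$Z$ term cannot occur. I would treat this first by isolating the terms with empty $\wtil X$-side and showing that they require $\beta=\iota_*\beta_Z$ and $k=0$.
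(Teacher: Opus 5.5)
Your architecture matches the paper's. You use the two degenerations of \eqref{eq:degn scheme}, assumption (ii) to force every leg onto the $\wtil X$-side, a dimension count for the correction terms, and the extra hypothesis in (a), (c) to discard terms with empty $\wtil X$-side. But the step that actually proves the theorem is left at ``I expect''.

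\emph{The missing inequality.} The missing idea is to run the count on the $\wtil X$-side alone. A term survives only if the relative insertion paired with $\pi^*\gamma$ has degree $\vd\overline\cM_{\Psi_1}(\wtil X,E)-\sum_i\deg\gamma_i\ge 0$; this is equivalent to your condition $\deg\zeta^\vee\le\dim E^q$. Write $\beta_{\wtil X}=\pi^!(\beta-\iota_*\beta_*)-\mathfrak e\ell$ with $\mathfrak e=\sum\mu$, and impose $\sum_i\deg\gamma_i=\vd\overline\cM_{g,k}(X,\beta)$. The condition then reads
\[
(-n_{\what{E}}+g_{\what{E}}+q)(d-3)+q-r\mathfrak e-\int_{\beta_*}c_1(X)\ \ge\ 0.
\]
By (iii) and $\mathfrak e\ge q$, this gives $(-n_{\what{E}}+g_{\what{E}}+q)(d-3)\ge q(r-1)$.

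For $q>0$, every bubble component carries a root and $g_{\what{E}}\le g$, so the inequality fails in each of (a)--(d); in (b), for instance, it reads $q+1\ge 3q$. The case $q=0$ means an empty $\wtil X$-side. This is excluded by the extra hypotheses in (a) and (c), by $\vd\overline\cM_{2,0}(X,0)=-1$ in (b), and by the same inequality in (d).

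Only $\overline\cM_{\Psi_1}(\wtil X,E)$ enters this argument, so it applies verbatim to the second degeneration. Your bubble-side count, by contrast, would have to be redone for $\PP_E(\cO_E(-1)\oplus1)$. Your bubble virtual dimension also counts the contact twice: it should be $c_1(T)\cdot\beta_j-\sum(\mu_i-1)$, or equivalently $c_1(T(-\log E))\cdot\beta_j+q$.

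\emph{The fallback is false.} The fallback you propose for borderline counts fails in exactly the case where you invoke it. For $Z=\pt$ one has $\overline\cM_{s,n}(\pt,0)=\overline\cM_{s,n}$, and its virtual class is the fundamental class because the obstruction bundle $\mathbb E^\vee\boxtimes T_Z$ is zero. The vanishing in Theorem~\ref{thm: all genus Abs-Rel corr} comes from $\vd\overline\cM_{s,0}(Z,0)=(1-s)(\dim Z-3)<0$. That needs $\dim Z\ge4$; for a point this quantity is $3s-3>0$.

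Genus-two vertices over a point are exactly what produce the nonzero factor $\int_{\overline\cM_{2,1}}\{\Lambda_2^\vee(-1)\}^5/(1+\psi_1)=1/144$ in Section~\ref{sec:counterex of Hu's conj}. So localization cannot rescue a borderline term, and the proof has to be purely dimensional. With the inequality above, no borderline term ever arises.
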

 \begin{proof}
     We show that all correction terms vanish in each of the two degenerations in \eqref{eq:degn scheme}. We may assume the degrees of $\gamma_i$ satisfy
     \begin{equation*}
        \vd \overline\cM_{g,k}(X,\beta)=(1-g)(d-3)+\int_\beta c_1(X)+k=\sum \deg(\gamma_i),
     \end{equation*}
     since otherwise both sides vanish for dimensional reasons and assumption~(ii).
     Use the notation $n_{\wtil X}$, $g_{\wtil X}$, $n_{\what E}$, $g_{\what E}$ and $q$ from Section~\ref{sec:counterex of Hu's conj}. Let $\mathfrak e=\sum_{\mathrm{roots}}\mu$ and $\beta_{\wtil X}=\pi^!\beta -\mathfrak e\cdot\ell - \pi^! \iota_*\beta_*$ be the total degree of $\wtil X$-vertices, where $\beta_* \in H_2(Z;\Z)$ is an effective curve class of $Z$. Since $\gamma_1|_Z = \cdots = \gamma_k|_Z = 0$, only terms with all absolute markings on the $\Psi_1$-side can contribute. A direct computation shows
     \begin{equation*}
         \vd\overline \cM_{\Psi_1}(\wtil X, E)=(n_{\wtil X}-g_{\wtil X})(d-3)+\int_{\beta - \beta_*} c_1(X)+k+q-r\mathfrak e.
     \end{equation*}
     The relative insertions must account for the following dimension difference:
     \begin{multline*}
         \vd \overline \cM_{\Psi_1}(\wtil X, E)-\vd \overline\cM_{g,k}(X,\beta)\\=(-n_{\what E}+g_{\what E}+q)(d-3)+q-r\mathfrak e - \int_{\beta_*} c_1(X).
     \end{multline*}
     A term can contribute only if this dimension difference is non-negative. By assumption (iii), we have $\int_{\beta_*} c_1(X) \ge 0$.
     Together with $\mathfrak e\geq q$, we get 
     \begin{equation}\label{eq: correctionineq}
     (-n_{\what E}+g_{\what E}+q)(d-3)\geq q(r-1).
     \end{equation}

     Now, by assumption (i), we have the following cases:
     \begin{itemize}
         \item [(a)] Assume $d \le 3$, and $k\ge 1$ or there is no regular map with degree $\beta$ from a curve to $Z$: 
         By assumption, there is no contribution with empty $\wtil X$-side. Hence, we may assume $q > 0$.
         Note that $(-n_{\what E}+g_{\what E}+q) \ge 0$. With $d\leq 3$, this implies $q=0$, which leads us to a contradiction.
         \item [(b)] Assume $g=2$, $\dim(X)=4$, $\dim(Z)=0$: 
         Note that the correction terms satisfy $n_{\what{E}} > 0$, $g_{\what{E}} \le 2$. Thus, we have $q+1 \ge 3q$, which implies $q=0$. Since $\dim Z=0$, this forces $k=0$ and $\beta=0$. One can thus see that the virtual dimension of $\overline\cM_{g,k}(X,\beta)$ is negative, a contradiction.
         \item [(c)] Assume $g = 1$, $\dim(Z) \le 1$, $d \ge 4$, and $k\ge 1$ or there is no regular map with degree $\beta$ from a curve to $Z$: 
         By assumption, maps with image contained in $Z \subset \what{E}$ make no contribution. Hence, we may assume $q > 0$.
         Moreover, the correction terms satisfy $n_{\what E}>0$. Since $g_{\what E}\leq 1$, we have $$q(d-3) \ge q(r-1) \ge q(d-2).$$ Since $d\geq 4$, this again implies $q=0$, which leads us to a contradiction.
         \item [(d)] Assume $g = 0$, $\dim(Z) \le 2$, and $d \ge 4$: 
         The correction terms satisfy $n_{\what E}>0$. With $g_{\what E} =  0$, we have $(q-1)(d-3) \ge q(d-3)$. This is impossible because $d \ge 4$.
     \end{itemize}
     
     In any case, there are no correction terms in the first degeneration. The argument carries over verbatim to the second degeneration, since the relative moduli term $\overline{\cM}_{\Psi_1}(\wtil X, E)$ is unchanged.
 \end{proof}

\begin{rmk}
By Theorem~\ref{thm: numerical blowup}, the blow-up formula \eqref{conj:blowup1} holds when $g=2$, $d = \dim(X) = 4$, and $Z$ is a point. This is why our first counterexample to Conjecture \ref{conj:Hu} has genus $2$ and a fivefold $X$.
\end{rmk}

In Theorem~\ref{thm:main2}, we proved $\pi_*[\overline\cM_{0,k}(\wtil X,\pi^!\beta+M\ell)]^{\vir}=0$ when the genus is $0$ and $Z$ is positive. The same method used in Theorem~\ref{thm: numerical blowup} yields a numerical vanishing statement in other genera under the same assumptions.
 \begin{thm}
 \label{thm: numerical vanishing}
      Under the assumptions of Theorem \ref{thm: numerical blowup} and for $M>0$, the following numerical vanishing holds:
     \begin{equation*}
\langle \pi^*\gamma_1,\cdots, \pi^*\gamma_k\rangle^{\wtil{X}}_{g,\pi^!\beta + M\ell}=0
\end{equation*}
for $\gamma_i\in \rH^*(X;\Z)$.
 \end{thm}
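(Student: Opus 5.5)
Our plan is to run the two degenerations of \eqref{eq:degn scheme} as in the proof of Theorem~\ref{thm: numerical blowup}, now with total class $\pi^!\beta+M\ell$ on $\wtil X$. The second degeneration $\wtil X\rightsquigarrow \wtil X\cup_E \PP_E(\cO_E(-1)\oplus 1)$ expresses $\langle \pi^*\gamma_1,\cdots,\pi^*\gamma_k\rangle^{\wtil X}_{g,\pi^!\beta+M\ell}$ through the main term $\langle\pi^*\gamma_1,\cdots,\pi^*\gamma_k\mid\ \rangle^{\wtil X,E}_{\Psi^\circ_M}$. Here $\Psi^\circ_M$ is the single-vertex graph of genus $g$, degree $\pi^!\beta+M\ell$, with no roots; this is admissible since $(\pi^!\beta+M\ell)\cdot E=-M<0$ would otherwise force roots. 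We expect the main term itself to be empty or absent. Indeed, a relative map to $(\wtil X,E)$ with no contact has image meeting $E$ with total intersection number $0$, whereas the class has $E$-degree $-M\neq 0$. Hence no rootless graph of this degree exists, and every term involves roots with $\sum\mu=(\text{degree})\cdot E\ge 0$.

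So every term in the degeneration of $\wtil X$ is a product with $q>0$ and a nonempty $\PP_E(\cO_E(-1)\oplus1)$-side, and we must show each vanishes. Assumption (ii) forces all markings onto the $\Psi_1$-side, because $\pi^*\gamma_i$ restricts to zero on $E$ (since $\gamma_i|_Z=0$). We then compute $\vd\overline\cM_{\Psi_1}(\wtil X,E)$ exactly as in the proof of Theorem~\ref{thm: numerical blowup}. Write $\beta_{\wtil X}=\pi^!\beta-\mathfrak e\ell-\pi^!\iota_*\beta_*$, where now the $\Psi_1$-degree is $\pi^!(\beta-\iota_*\beta_*)-\mathfrak e'\ell$ with $\mathfrak e'\geq q$ equal to the contact order sum, and the remaining $M+\mathfrak e'$ multiples of $\ell$ sit on the other side. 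The dimension count for $\Psi_1$ depends only on its own degree and contact data, so it is literally the same expression as before. Comparing with $\sum\deg\gamma_i=\vd\overline\cM_{g,k}(X,\beta)$ (otherwise both sides vanish, noting $\vd$ for $\pi^!\beta+M\ell$ is smaller by $(r-1)M$, so the degree condition cannot be met and the invariant is zero) again yields inequality \eqref{eq: correctionineq}. The case analysis (a)--(d) from the proof of Theorem~\ref{thm: numerical blowup} then rules out every $q>0$ configuration.

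The main obstacle is the bookkeeping of the degree split. Specifically, we must check that the side $\PP_E(\cO_E(-1)\oplus1)$ absorbing $M\ell$ does not change the $\Psi_1$ computation, and that the unused dimension (the relative insertions plus the genus and vertex counts on the other side) still gives \eqref{eq: correctionineq} with the same sign. We would verify this first by reproducing the relative insertion-degree count: the relative insertions on $E^q$ have total degree at most $q(d-2)$, which is how \eqref{eq: correctionineq} arose. In the cases with no roots and empty $\wtil X$-side, such as $\beta_*$-only configurations in cases (a) and (c), the hypothesis "$k\ge1$ or no map to $Z$ of degree $\beta$" excludes them exactly as before. Together these show that all terms vanish, giving the numerical vanishing.
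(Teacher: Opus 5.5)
Your overall route is the paper's. You degenerate $\wtil X\rightsquigarrow\wtil X\cup_E\PP_E(\cO_E(-1)\oplus1)$, use $\gamma_i|_Z=0$ to put all markings on the $\wtil X$-side, and kill every term by a virtual-dimension count on $\overline\cM_{\Psi_1}(\wtil X,E)$. Excluding the rootless term via $(\pi^!\beta+M\ell)\cdot E=-M\neq0$ is a valid substitute for the paper's treatment of that term.

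The gap is in the dimension normalization, which is the heart of the argument. Since $c_1(\wtil X)=\pi^*c_1(X)-(r-1)E$ and $E\cdot\ell=-1$, one has $c_1(\wtil X)\cdot\ell=r-1$. Hence
\[
\vd\overline\cM_{g,k}(\wtil X,\pi^!\beta+M\ell)=\vd\overline\cM_{g,k}(X,\beta)+(r-1)M,
\]
which is larger, not smaller. The invariant can be nonzero only if $\sum\deg\gamma_i$ equals this number, and that is exactly the case your reduction discards. The phrase ``otherwise both sides vanish'' is inherited from Theorem~\ref{thm: numerical blowup} and does not apply here. There is only one side, and a mismatch with $\vd\overline\cM_{g,k}(X,\beta)$ forces nothing. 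The case you keep, $\sum\deg\gamma_i=\vd\overline\cM_{g,k}(X,\beta)$, is one where the invariant vanishes trivially for degree reasons. So, as written, your degeneration analysis takes place in a vacuous case. Your stated sign does not rescue it either: normalizing consistently to $\vd\overline\cM_{g,k}(X,\beta)-(r-1)M$ would only give $(-n_{\what E}+g_{\what E}+q)(d-3)\ge(r-1)(q-M)$, from which \eqref{eq: correctionineq} does not follow.

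The fix is to compare $\vd\overline\cM_{\Psi_1}(\wtil X,E)$ with $\vd\overline\cM_{g,k}(\wtil X,\pi^!\beta+M\ell)$. Their difference is the total degree of the relative insertions on $E^q$, so it is $\ge0$. This lower bound, not an upper bound such as $q(d-2)$ on the relative insertions, is where \eqref{eq: correctionineq} comes from. Using $\mathfrak e\ge q$ and $\int_{\beta_*}c_1(X)\ge0$, you get the paper's inequality
\[
(-n_{\what E}+g_{\what E}+q)(d-3)\ \ge\ (r-1)(q+M),
\]
where $n_{\what E},g_{\what E}$ now count vertices and genus on the $\PP_E(\cO_E(-1)\oplus1)$-side. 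This implies \eqref{eq: correctionineq}, so your case analysis goes through. The extra term $(r-1)M>0$ also disposes of $q=0$ configurations your text leaves open. For example, in case (b) the term with empty $\wtil X$-side ($k=0$, $\beta=0$) is not covered by your remarks on cases (a) and (c), but the corrected inequality gives $1\ge 2q+3M$ there, which is impossible.
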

 \begin{proof}
Following the notation of Theorem~\ref{thm: numerical blowup}, we first have $\beta_{\wtil{X}} \cdot E \ge 0$ by pre-deformability. Together with $\beta_{\wtil{X}} = \pi^! \beta - \mathfrak{e} \ell - \pi^!\iota_*\beta_*$, we have $\mathfrak{e} = \beta_{\wtil{X}} \cdot E \ge 0$. Then the discrepancy is
\begin{multline*}
\vd \overline \cM_{\Psi_1}(\wtil X, E)-\vd \overline\cM_{g,k}(\wtil X,\pi^!\beta+M\ell)\\=(-n_{\what E}+g_{\what E}+q)(d-3)+q-r\mathfrak e -(r-1) M - \int_{\beta_*} c_1(X) \geq 0.    
\end{multline*}
Since $\mathfrak e\geq q$ and $\int_{\beta_*}c_1(X)\geq0$, the preceding inequality gives:
\begin{equation}
\label{eq: auxiliary}
(-n_{\what E}+g_{\what E}+q)(d-3) \ge (r-1)(q+M) \ge (r-1)q.
\end{equation}

Therefore, assuming (i) to (iii), the above inequality cannot hold except possibly when $\Psi_2 = \emptyset$, that is, $n_{\what E} = g_{\what E} = q = 0$. Even in this case, $n_{\what E} = g_{\what E} = q = 0$ cannot satisfy \eqref{eq: auxiliary} since $M> 0$. Thus all terms of the degeneration formula vanish numerically. Therefore we obtain the desired vanishing.     
 \end{proof}

\section{Details of the computation}
We give the computation details of \eqref{eq:second correction}. Endow $\PP(\cO_E(-1)\oplus 1)$ with its full torus action. Let $p_0,\cdots, p_4$ be the fixed points of $E \cong \PP^4$ and $\alpha_i$ be the negative of the weights of $\cO_{\PP^4}(-1)$ at $p_i$. Let $u$ be the weight of the fiber action on $\PP(\cO_E(-1)\oplus 1)$. Let $E_\infty := \PP(0 \oplus 1)$. Consequently, $u_i=u-\alpha_i$ are weights of $N_{E_\infty /\PP(\cO_E(-1)\oplus 1)}\cong \cO_{E_\infty}(-1)$ at $p_i$. Localization with respect to the fiber-scaling action gives the following simple-locus contribution to $\langle \text{ }\mid\pt\rangle^{(\PP(\cO_E(-1)\oplus 1), E_\infty)}_{\Psi_2}$:
\begin{align*}
    \int_{[\overline \cM_{2,1}(\PP^4,1)]^{\vir}}\frac{\ev_1^*[p_0]}{(u_0-\psi_1)\cdot e(R\rho_*f^*\cO_{E_{\infty}}(-1))}.
\end{align*}
Applying localization for the torus action on $\PP^4$, there are three types of graphs depending on the splitting of the genus $(g_0, g_j)$. Here $g_0$ and $g_j$ are the genera assigned to the vertices over $p_0$ and $p_j$, respectively. The non-constant part maps to the invariant line connecting $p_0$ and $p_j$. For convenience, we denote
\begin{equation*}
    \omega_{ij}=\alpha_i-\alpha_j,\quad E_{0j}=-\frac{1}{\omega_{0j}^2}\prod_{k\neq 0,j}\frac{1}{\omega_{0k}\,\omega_{jk}},\quad P_0=\prod_{k\neq 0}\omega_{0k}.
\end{equation*}
Here $\omega_{ij}$ denotes a tangent weight in $\PP^4$,  $E_{0j}$ the degree-one edge factor to $1/e(N^{\vir})$, and $P_0$ the equivariant point class $[\pt]$. There are three types of simple fixed loci, corresponding to the distribution of genus. We evaluate their contributions by substituting $\alpha_0=0, \alpha_1=\epsilon,\alpha_2=-\epsilon, \alpha_3=2\epsilon, \alpha_4=-2\epsilon$.\\

\textbf{Type~$(g_0,g_j)=(2,0)$.} There is a genus-two component over $p_0$. On each fixed locus, $e(-R\rho_*f^*\cO_{E_{\infty}}(-1))$ restricts to $\Lambda_2^\vee(u_0)$, and the sum of all contributions of this type is:
\begin{equation*}
\sum_{j=1}^4P_0E_{0j}\omega_{j0}\cdot\int_{\overline\cM_{2,2}}\frac{\prod_{k\neq 0}\Lambda_2^\vee(\omega_{0k})\cdot\Lambda_2^\vee(u_0)}{(\omega_{0j}-\psi_1)(u_0-\psi_2)}=\frac{7}{2560}.
\end{equation*}
Here $\psi_1$ is the cotangent class at the $p_0p_j$-edge flag. The $\psi_2$-class is the cotangent class at the ordinary marking.\\

\textbf{Type~$(g_0,g_j)=(1,1)$.} There is a genus-one component over each of $p_0$ and $p_j$. On each fixed locus, $e(-R\rho_*f^*\cO_{E_\infty}(-1))$ restricts to $\Lambda_1^\vee(u_0)\Lambda_1^\vee(u_j)$, and
the sum of all contributions of this type is:
\begin{equation*}
\sum_{j=1}^4P_0E_{0j}\cdot\int_{\overline\cM_{1,2}}\frac{\prod_{k\neq 0}\Lambda_1^\vee(\omega_{0k})\Lambda_1^\vee(u_0)}{(\omega_{0j}-\psi_1)(u_0-\psi_2)}\cdot \int_{\overline\cM_{1,1}}\frac{\prod_{k\neq j}\Lambda_1^\vee(\omega_{jk})\Lambda_1^\vee(u_j)}{\omega_{j0}-\psi_1}=\frac{11}{768}.
\end{equation*}
Again, the $\psi_1$-classes on $\overline\cM_{1,2}$ and $\overline\cM_{1,1}$ denote cotangent classes at the $p_0p_j$-edge flags.\\

\textbf{Type~$(g_0,g_j)=(0,2)$.} There is a genus-two component over $p_j$. On each fixed locus, $e(-R\rho_*f^*\cO_{E_{\infty}}(-1))$ restricts to $\Lambda_2^\vee(u_j)$, and
the sum of all contributions of this type is:
\begin{equation*}
\sum_{j=1}^4P_0E_{0j}\cdot\frac{1}{u_0+\omega_{0j}}\cdot\int_{\overline\cM_{2,1}}\frac{\prod_{k\neq j}\Lambda_2^\vee(\omega_{jk})\Lambda_2^\vee(u_j)}{\omega_{j0}-\psi_1}=\frac{236\epsilon^4+745\epsilon^2u^2-213u^4}{23040(\epsilon^2-u^2)(4\epsilon^2-u^2)}.
\end{equation*}
We use similar notation for the $\psi_1$- and $\psi_2$-classes as above. After setting $\epsilon =0$, or equivalently taking the $u\to \infty$, we get $-71/7680$.\\

The contributions of composite fixed loci in the relative localization formula \cite[Theorem 3.6]{GV05} contain an additional $\frac{1}{u+\psi_\sim}$ factor. One can check that every stable composite locus contributes $O(u^{-1})$. These contributions vanish in the limit $u\to \infty$. Thus, summing the three simple-locus contributions gives 
\begin{equation*}
    \frac{7}{2560}+\frac{11}{768}-\frac{71}{7680}=\frac{1}{128}.
\end{equation*}

\end{document}